\newcommand{\nc}{\newcommand}
\theoremstyle{definition}
\newtheorem{mydef}{\textbf{Definition}}[section]
\newtheorem{myeg}[mydef]{\textbf{Example}}
\newtheorem{mythm}[mydef]{\textbf{Theorem}}
\newtheorem{rmk}[mydef]{\textbf{Remark}}
\theoremstyle{plain}
\newtheorem*{inthm}{\textbf{Theorem}}
\newtheorem{lem}[mydef]{\textbf{Lemma}}
\newtheorem{prop}[mydef]{\textbf{Proposition}}
\newcommand{\set}[2]{\left\{#1 : #2\right\}}
\nc{\mc}{\mathcal}              % shorthand for \mathcal
\nc{\on}{\operatorname}         % shorthand for \operatorname
\nc{\wt}{\widetilde}            % nonzero elements
\nc{\ses}[3]{{#1}\hookrightarrow{#2}\twoheadrightarrow{#3}}
\nc{\MS}{\mathbf{Mat}_{\bullet}}
\nc{\Set}{\mathcal{S}et_{\bullet}}
\nc{\E}{\mc{E}}                 % category
\nc{\EE}{\mathfrak{E}}          % admissible epics
\nc{\MM}{\mathfrak{M}}          % admissible monics
\nc{\FF}{\mathbb{F}}            % forgetful functor
\nc{\GG}{\mathbb{G}}            % functor
\DeclareMathOperator{\id}{\on{id}}       % identity map
\nc{\I}{\mathcal{I}}
\nc{\C}{\mathcal{C}}            % category
\nc{\fl}{\mathbf{FL}}           % flags
\renewcommand{\H}{{H}}       % hall algebra
\newcommand{\fun}{\mathbb{F}_1}
\newcommand{\A}{\on{A}}
\renewcommand{\S}{\mathcal S}
\newcommand{\mspec}{\on{MSpec}}
\newcommand{\p}{\mathfrak{p}}
\nc{\m}{\mathfrak{m}}
\newcommand{\ol}{\overline}
\nc{\h}{\mathfrak{h}}
\nc{\g}{\mathfrak{g}}
\nc{\n}{\mathfrak{n}}
\nc{\ch}{\on{CH}}
\nc{\F}{\mc{F}}
\nc{\M}{\on{M}}
\nc{\T}{\mc{T}}
\nc{\G}{\mc{G}}
\nc{\ov}{\overline}
\renewcommand{\S}{\mc{S}}
\nc{\Bun}{\mc{B}un}
\nc{\VFun}{\on{Vect}(\fun)}
\nc{\vfun}{\VFun}
\nc{\Amod}{\on{Mod}_{\on{A}}}
\nc{\sAmod}{\on{Mod}_{\on{S^{-1}A}}}
\nc{\Amoda}{\Amod^{\alpha}}
\nc{\Pone}{\mathbb{P}^1}
\nc{\Ptwo}{\mathbb{P}^2}
\nc{\Aone}{\mathbb{A}^1}
\nc{\An}{\mathbb{A}^n}
\renewcommand{\fun}{\mathbb{F}_1}
\nc{\slthat}{\widehat{\mf{sl}}_2}
\renewcommand{\a}{\mathfrak{a}}
\renewcommand{\p}{\mathfrak{p}}
\nc{\spec}{\on{MSpec}}
\nc{\Spec}{\on{Spec}}
\nc{\Msch}{\mc{M}sch}
\nc{\mt}{\widetilde{M}}
\nc{\Pn}{\fun \langle x_1, \cdots, x_n \rangle}
\nc{\Pt}{\fun \langle x_1, x_2 \rangle }
\nc{\ra}{\rightarrow}
\nc{\Qcoh}{\on{Qcoh}}
\nc{\Coh}{\on{Coh}}
\nc{\typea}{\emph{type}-$\alpha \;$}
\theoremstyle{remark}
 \numberwithin{equation}{section}
\begin{document}

 \title{Hall Lie algebras of toric monoid schemes}
  \author{Jaiung Jun and Matt Szczesny}
  \date{}
  
\makeatletter
\@namedef{subjclassname@2020}{%
	\textup{2020} Mathematics Subject Classification}
\makeatother
  
\subjclass[2020]{14A23, 14F99, 14M25, 16T05, 16S30, 05E10, 05E14}
\keywords{Hall algebra, monoid scheme, Proto-exact, proto-abelian, skew shape, infinite-dimensional Lie algebra, loop algebra}

\begin{abstract}
We associate to a projective $n$-dimensional toric variety $X_{\Delta}$ a pair of co-commutative (but generally non-commutative)  Hopf algebras $H^{\alpha}_X, H^{T}_X$. These arise as Hall algebras of certain categories $\Coh^{\alpha}(X), \Coh^T(X)$ of coherent sheaves on $X_{\Delta}$ viewed as a monoid scheme - i.e. a scheme obtained by gluing together spectra of commutative monoids rather than rings. When $X_{\Delta}$ is smooth, the category $\Coh^T(X)$ has an explicit combinatorial description as sheaves whose restriction to each $\mathbb{A}^n$ corresponding to a maximal cone $\sigma \in \Delta$ is determined by an $n$-dimensional generalized skew shape. The (non-additive) categories  $\Coh^{\alpha}(X), \Coh^T(X)$ are treated via the formalism of proto-exact/proto-abelian categories developed by Dyckerhoff-Kapranov. 

The Hall algebras $H^{\alpha}_X, H^{T}_X$ are graded and connected, and so enveloping algebras $H^{\alpha}_X \simeq U(\n^{\alpha}_X)$, $H^{T}_X \simeq U(\n^{T}_X)$, where the Lie algebras $\n^{\alpha}_X, \n^{T}_X$ are spanned by the indecomposable coherent sheaves in their respective categories. 

We explicitly work out several examples, and in some cases are able to relate $\n^T_X$ to known Lie algebras. In particular, when $X = \mathbb{P}^1$, $\n^T_X$ is isomorphic to a non-standard Borel in $\mathfrak{gl}_2 [t,t^{-1}]$. When $X$ is the second infinitesimal neighborhood of the origin inside $\mathbb{A}^2$, $\n^T_X$ is isomorphic to a subalgebra of $\mathfrak{gl}_2[t]$. We also consider the case $X=\mathbb{P}^2$, where we give a basis for $\n^T_X$ by describing all indecomposable sheaves in $\Coh^T(X)$.

\end{abstract}

\maketitle

\tableofcontents

\section{Introduction}

Let $X=X_{\Delta}$ be a projective toric variety defined by a fan $\Delta$. In this paper we attach to $X$ a pair of co-commutative Hopf algebras (in fact, enveloping algebras) $H^{\alpha}_X, H^{T}_X$. These arise as Hall algebras of certain categories of coherent sheaves $\Coh^{\alpha}(X), \Coh^T(X)$ to be defined below. In order to define these, we view $X_{\Delta}$ not as an ordinary variety, but rather as a \emph{monoid scheme} - a space obtained by gluing together the spectra of commutative monoids rather than rings. One particular feature of this setting is that the category of coherent sheaves is no longer abelian, or even additive, and has an explicitly combinatorial flavor. Our constructions may be viewed within the general philosophy of algebraic geometry over $\fun$ - the "field" of one element. In the rest of this introduction we briefly recall Hall algebras of $\mathbb{F}_q$-linear finitary categories and their connections to quantum groups, discuss how Dyckerhoff-Kapranov's formalism of proto-exact categories allows one to treat Hall algebras  of non-additive categories, and how our construction may conjecturally be used to compute the classical limit of Hall algebras of toric varieties of dimension two and beyond. For an introduction to Hall algebras and their applications in representation theory, we refer the interested reader to the excellent review \cite{S}. 

\subsection{Hall algebras of finitary abelian categories and quantum groups}

An abelian (or exact) category $\mc{A}$ is called \emph{finitary} if $\on{Hom}(M,N)$ and $\on{Ext}^1(M,N)$ are finite {\bf sets} for any pair of objects $M, N \in \mc{A}$. Two examples of such are $Rep(Q, \mathbb{F}_q)$ - the category of representations of a quiver $Q$ over $\mathbb{F}_q$, and $Coh(X)$ - the category of coherent sheaves on a projective variety $X$ over $\mathbb{F}_q$. One may associate to $\mc{A}$ a pair of associative algebras $\H_{\mc{A}}, \wt{\H}_{\mc{A}}$ \cite{Ringel}. Letting $Iso(\mc{A})$ denote set of isomorphism classes of objects in $\mc{A}$, denoting by $[M]$ the isomorphism class of $M \in \mc{A}$, let $$ H_{\mc{A}} := \{ f: Iso(\mc{A}) \rightarrow \mathbb{C} \mid f \textrm{ has finite support } \} $$ 
with associative multiplication
\begin{equation} \label{hall_multiplication}
f \bullet g ([M]) = \sum_{N \subset M} f([M/N]) g([N]) 
\end{equation}
where the summation is over all sub-objects $N \subset M$. It follows that
$$ \delta_{[M]} \bullet \delta_{[N]} = \sum_{[K] \in Iso(\mc{A})} g^K_{M,N} \delta_{[K]} $$
where 
$$ g^{K}_{M,N} = | \{ L \subset K \vert L \simeq N, K/L \simeq M \} |, $$ and so 
$g^{K}_{M,N} |Aut(M)| |Aut(N)|$ counts the number of isomorphism classes of short exact sequences
$$ 0 \rightarrow N \rightarrow K \rightarrow M \rightarrow 0.$$

To connect Hall algebras to quantum groups one needs to twist the multiplication in ${H}_{\mc{A}}$ by the multiplicative Euler form
$$ \langle M, N \rangle_m := \sqrt{ \prod^{\infty}_{i=0} |\on{Ext}^i (M,N) |^{(-1)^i}  }$$ 
by introducing the new product 
$$ f \star g ([M]) = \sum_{N \subset M} \langle M/N, N \rangle_m f([M/N]) g([N]),$$ as well as extend it by $\mathbb{C}[K_0 (\mc{A})]$ (the group algebra of the Grothendieck group). 
One thereby obtains a new algebra $\wt{H}_{\mc{A}}$ defined over $\mathbb{Q}(\sqrt{q})$ when $\mathcal{A}$ is $\mathbb{F}_q$-linear. If $\mc{A}$ is hereditary, $\wt{H}_{\mc{A}}$ may be equipped with a topological bialgebra structure using the so-called Green's co-multiplication \cite{G}. 

For a quiver $Q$, and $\mc{A} = Rep(Q, \mathbb{F}_q)$, there is an embedding $U^+_{\nu}(\mathfrak{g}_Q) \hookrightarrow \wt{H}_{\mc{A}}$, where the former denotes the positive half of the quantized enveloping algebra of $\mathfrak{g}_Q$ - the Kac-Moody algebra with $Q$ as its Dynkin diagram, specialized at $\nu=\sqrt{q}$.
When $\mc{A} = Coh(\mathbb{P}^1_{\mathbb{F}_q})$, Kapranov \cite{K1} and Baumann-Kassel \cite{BK} show that there is an embedding $U^+_{\nu}(\widehat{\mathfrak{sl}}_2) \hookrightarrow \wt{H}_{\mc{A}}$, where the former is a certain positive subalgebra of the quantum affine algebra of $\mathfrak{sl}_2$. The Hall algebras of elliptic curves have been studied by Burban-Schiffmann, Schiffmann, and Schiffmann-Vasserot \cite{BS, S2, SV}, and shown to be connected with DAHA (Double affine Hecke algebras) as well as a number of other representation-theoretic objects. Hall algebras of higher-genus projective curves were studied in \cite{KSV}. 

There is an extensive body of work connecting Hall algebras, geometric representation theory, the study of moduli spaces of sheaves/quiver representations, and the Langlands program which we cannot possibly summarize here. We mention only that understanding variants of the Hall algebra $\wt{H}_{Coh(X)}$ when $dim(X) > 1$ is an important and difficult problem (see for instance the work of Kapranov-Vasserot \cite{KV, KV2} ). 

\subsection{Hall algebras in the non-additive setting}

An examination of the product (\ref{hall_multiplication})  reveals that the requirement that $\mc{A}$ be abelian or even additive is unnecessary. In \cite{DK} Dyckerhoff-Kapranov introduce \emph{proto-exact} categories. These are generalizations of Quillen exact categories which satisfy a minimal set of axioms designed to yield an associative product via (\ref{hall_multiplication}) whenever they are finitary. 

Many non-additive examples of proto-exact categories come from combinatorics. Here, $\mc{A}$ typically consists of combinatorial structures equipped an operation of "collapsing" a sub-structure, which corresponds to forming a quotient in $\mc{A}$. Examples of such $\mc{A}$ include trees, graphs, posets, matroids, semigroup representations on pointed sets, quiver representations in pointed sets etc. (see \cite{EJS, KS, Sz1, Sz2, Sz3, Sz4} ). The product in $\H_{\mc{A}}$, which counts all extensions between two objects, thus amounts to enumerating all combinatorial structures that can be assembled from the two. In this case, $\H_{\mc{A}}$ is (dual to) a combinatorial Hopf algebra in the sense of \cite{LR}. Many combinatorial Hopf algebras arise via this mechanism. 

\subsection{$\fun$, Hall algebras, and classical limits}

It's an old observation that many combinatorial problems over $\mathbb{F}_q$ have well-defined limits as $q \rightarrow 1$, in which they reduce to analogous problems about finite sets. This is well illustrated by the example of the Grassmannian, where the number of $\mathbb{F}_q$-points is given by a $q$-binomial coefficient:
\[
\# \on{Gr}(k,n)/\mathbb{F}_q = \frac{[n]_q !}{[n-k]_q ! [k]_q !} 
\]
with
\[
[n]_q ! = [n]_q [n-1]_q \ldots [2]_q
\]
and 
\[
[n]_q = 1 + q + q^2 + \ldots + q^{n-1}.
\]
In the limit $q \rightarrow 1$ this reduces to the binomial coefficient $\binom{n}{k}$, counting $k$-element subsets of an $n$-element set. Another famous observation due to Tits is that if $G$ is a simple algebraic group of adjoint type, and we denote by $|G(\mathbb{F}_q)|$ the number of points of $G$ over $\mathbb{F}_q$,  then (suitably normalized) $$ \lim_{q \rightarrow 1} |G(\mathbb{F}_q)| = | W(G)|,$$
where $W(G)$ is the Weyl group of $G$. 

Finding a meaningful unifying framework for these and other observations has led to attempts to define mathematics over $\fun$ - "the field of one element". We refer the interested reader to the review \cite{lorscheid2018f1}. In its most pedestrian form, the yoga of $\fun$ states that a vector space over $\fun$ is a pointed set, and monoids are the analogues of algebras. These structures are manifestly "non-additive". One version of algebraic geometry over $\fun$ developed by Deitmar \cite{D1,D2,D3}, Kato \cite{Kato}, Soul\'e \cite{soule2004varieties}, Connes-Consani-Marcolli \cite{CC1,CC2,CCM}, and Cortinas-Haesemayer-Walker-Weibel \cite{CHWW}, is the theory of  \emph{monoid schemes}. These are spaces glued from spectra of commutative monoids and equipped with a structure sheaf of monoids, much as ordinary schemes are glued from spectra of commutative rings. 
%One can also develop a theory of coherent sheaves on these monoid schemes. 

One may ask how the $q \rightarrow 1$ limit behaves at the level of Hall algebras, when such a comparison makes sense. For instance, when $Q$ is a quiver, we can try to compare the Hall algebras of the categories $Rep(Q, \mathbb{F}_q)$ and $Rep(Q, \fun)$. Similarly, if $X$ is a projective toric variety (which in particular implies that it arises via base change from a monoid scheme), we may try to compare the Hall algebras of $Coh(X_{\mathbb{F}_q})$ and $Coh(X_{\fun})$. The results in \cite{Sz4, Sz1} suggest that the Hall algebras computed in the $\fun$ world are (possibly degenerate) $q \rightarrow 1$ (i.e. "classical")  limits of their $\mathbb{F}_q$  analogues. By doing calculations in the $\fun$ world we may thus hope to learn something about the Hall algebras of $Coh(X)$ when $dim(X) > 1$, where  little is known.  

A rationale for why computations over $\fun$ are related to classical limits is explored in \cite{CLMT}. For instance, a two-dimensional vector space over a field may be viewed as Qbit, where superposition of quantum states is allowed, whereas a two-dimensional vector space over $\fun$ corresponds to a classical bit, the two elements corresponding to $0, 1$. 

\subsection{Summary of main results}

Let $\Delta \subset \mathbb{R}^n$ be a fan (in the sense of toric geometry). Each cone $\sigma \in \Delta$ defines a commutative monoid $S_{\sigma}$, and $\Delta$ provides data for gluing their spectra, to obtain a monoid scheme $(X_{\Delta}, \mc{O}_X)$. 
%Associated to $\Delta$ and a field $k$ is also an ordinary toric variety $X_{\Delta, k}$, obtained by gluing the spectra of the monoid algebras over $k$. 
The category of coherent sheaves $\Coh(X)$, defined in a manner analogous to ordinary algebraic geometry, is now non-additive. Paraphrased, our first result is:

\begin{inthm}[Proposition \ref{coh_pe_thm}]
$\Coh(X)$ has the structure of a proto-exact category in the sense of Dyckerhoff-Kapranov. 
\end{inthm}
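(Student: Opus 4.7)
The plan is to establish the proto-exact structure locally on affine charts and then show that it extends to the whole monoid scheme $X_{\Delta}$. Recall that a coherent sheaf $\mc{F} \in \Coh(X)$ is by definition one whose restriction to each affine chart $U = \spec A \subset X$ is of the form $\widetilde{M}$ for a finitely generated $A$-module $M$, where an $A$-module here is a pointed set on which the monoid $A$ acts fixing the basepoint. I would declare the \emph{admissible monomorphisms} (inflations) to be the injective maps of coherent sheaves, and the \emph{admissible epimorphisms} (deflations) to be those morphisms whose local form, over each $\spec A$, is $M \twoheadrightarrow M/L$ for a sub-$A$-module $L \subset M$, where $M/L$ denotes $M$ with $L$ collapsed to the basepoint.

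First I would verify the Dyckerhoff--Kapranov axioms affine-locally in $\Amod$. The zero object is the one-element pointed set. A composition of injections is an injection, and a composition of quotient maps $M \twoheadrightarrow M/L \twoheadrightarrow (M/L)/(L'/L) \cong M/L'$ is again a quotient map. For the kernel--cokernel correspondence, given an inflation $L \hookrightarrow M$ its cokernel is $M/L$; conversely, given a deflation $M \twoheadrightarrow M/L$ the preimage of the basepoint is exactly $L$. The pushout along an inflation $i \colon L \hookrightarrow M$ of an arbitrary morphism $f \colon L \to N$ is constructed as $(M \sqcup N)/{\sim}$, where $\sim$ identifies the two basepoints and, for each $\ell \in L$, identifies $i(\ell)$ with $f(\ell)$. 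The pullback along a deflation $p \colon M \twoheadrightarrow M/L$ of a morphism $g \colon K \to M/L$ is the sub-$A$-module $\{(m,k) \in M \times K : p(m) = g(k)\}$ of $M \times K$ (with basepoint $(0_M, 0_K)$). Direct inspection then shows that pushouts of admissible monomorphisms are again admissible monomorphisms, pullbacks of admissible epimorphisms are admissible epimorphisms, and the resulting squares are bi-Cartesian.

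For the globalization step, I would observe that forming submodules, identifying injective and surjective maps, and computing the pushouts and pullbacks described above are all compatible with localization $A \to A_a$ at an element $a \in A$, because they are built out of finite limits and colimits of pointed sets with monoid action. This compatibility ensures that the constructions glue across overlapping affines, so that being an inflation or deflation is a local condition and the relevant sheaf-theoretic pushouts and pullbacks exist and are computed affine-locally. Assembling these pieces yields the proto-exact structure on $\Coh(X)$.

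The main obstacle is the verification of the bi-Cartesian axiom: in the non-additive setting, pushouts of pairs of admissible monomorphisms need not in general coincide with the corresponding pullbacks of the associated admissible epimorphisms, so one must work through the explicit pointed-set constructions carefully to confirm the agreement of the two universal objects. A subsidiary technical point is checking compatibility of the quotient $M/L$ with localization, which reduces to confirming that the equivalence relation defining $M/L$ respects the multiplicative action used in forming the localized $A_a$-module $(M/L)_a$.
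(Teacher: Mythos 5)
Your proposal is correct and follows essentially the same route as the paper: define the admissible classes so that they are given affine-locally by pointed inclusions and quotient maps $M \twoheadrightarrow M/L$, verify the Dyckerhoff--Kapranov axioms in $\Amod$, and then glue over an affine cover using the fact that submodules, quotients, and the relevant pushouts and pullbacks (Remark \ref{pe_structure_Amod}) commute with localization (Proposition \ref{module_localization_prop}). The only difference is one of packaging: the paper disposes of the affine case by identifying $\Amod$ with the functor category $\on{Fun}(A,\Set^{fin})$ and citing the Dyckerhoff--Kapranov result that such functor categories inherit a proto-exact structure from $\Set^{fin}$, whereas you verify the axioms in $\Amod$ directly by exhibiting the explicit pointed-set pushouts and pullbacks; both are valid.
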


This means that one can define exact sequences and $\on{Ext}$, and that $\Coh(X)$ has an associated algebraic K-theory defined via the Waldhausen construction. There is however an obstacle to defining the Hall algebra $\H_{\Coh(X)}$, since the category $\Coh(X)$ is not finitary even when $\Delta$ is the fan of a smooth projective toric variety such as say $\mathbb{P}^n$. To overcome this difficulty we define subcategories
\[
\Coh^T(X) \subset \Coh^{\alpha}(X) \subset \Coh(X)
\]
The subcategory $\Coh^{\alpha}(X)$ consists of coherent sheaves $\F$ having the property that the action of the monoid structure sheaf $\mc{O}_X$ on $\F$ is sufficiently nice, and contains all locally free sheaves. More precisely, we require that multiplication by a section of the structure sheaf not identify two distinct sections of $\F$ unless these get sent to $0$. This turns out to eliminate the problem of non-finitarity and allows us to define a Hall algebra:

\begin{inthm}[Theorem \ref{finitary_alpha}] 
Let $\Delta$ be the fan of a projective toric variety, and $(X_{\Delta}, \mc{O}_X)$ the corresponding monoid scheme. Then
\begin{enumerate}
\item $\Coh^{\alpha}(X_{\Delta})$ is a finitary proto-abelian category.  
\item The Hall algebra $H_{\Coh^{\alpha}(X_{\Delta})}$ is isomorphic, as a Hopf algebra, to an enveloping algebra $U(\mathfrak{n}^{\alpha}_X)$, where $\mathfrak{n}^{\alpha}_X$ has the indecomposable sheaves in $\Coh^{\alpha}(X_{\Delta})$ as a basis. 
\end{enumerate}
\end{inthm}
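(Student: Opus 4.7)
The plan is to split the statement into its two parts and handle them sequentially, leveraging local analysis over the affine monoid charts for (1) and invoking the Milnor-Moore classification of graded connected cocommutative Hopf algebras for (2).

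For (1), I would first show that $\Coh^{\alpha}(X_{\Delta})$ inherits a proto-exact structure from $\Coh(X_{\Delta})$, which is proto-exact by Proposition \ref{coh_pe_thm}. The key verification is that the type-$\alpha$ condition is preserved under admissible short exact sequences: if $\ses{\F'}{\F}{\F''}$ is admissible in $\Coh(X_{\Delta})$ and any two of the three sheaves are type-$\alpha$, so is the third. Since type-$\alpha$ is a local property, this reduces to the analogous statement for modules over the commutative monoids $A_\sigma$ associated to the maximal cones $\sigma \in \Delta$. Strengthening proto-exact to proto-abelian amounts to producing, for each admissible morphism, a canonical factorization through an image object, which is again a local check on each $\spec(A_\sigma)$.

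The finitarity claim requires showing that $\on{Hom}(\F, \G)$ and $\on{Ext}^1(\F, \G)$ are finite sets for all $\F, \G \in \Coh^{\alpha}(X_{\Delta})$. Because $X_{\Delta}$ is projective, a type-$\alpha$ coherent sheaf on each chart admits a finite combinatorial parametrization as a pointed set equipped with a monoid action generated in finite degree. The type-$\alpha$ hypothesis rigidifies morphisms: they are determined by their values on a finite set of generators, and coherence of the target leaves only finitely many choices for those values. A parallel argument applied to the combinatorial data classifying short exact sequences then bounds the number of admissible extensions.

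For (2), the Hall algebra $H_{\Coh^{\alpha}(X_{\Delta})}$ carries a cocommutative bialgebra structure in the Dyckerhoff-Kapranov framework: multiplication is given by (\ref{hall_multiplication}), and the dual comultiplication sends $\delta_{[M]} \mapsto \sum_{N \subset M} \delta_{[N]} \otimes \delta_{[M/N]}$. The Hopf algebra is graded (for instance by the class in $K_0(\Coh^{\alpha}(X_{\Delta}))$) and is connected, with degree zero spanned by $\delta_{[0]}$. The Milnor-Moore theorem therefore gives $H_{\Coh^{\alpha}(X_{\Delta})} \simeq U(\mathfrak{n}^{\alpha}_X)$ where $\mathfrak{n}^{\alpha}_X$ is the Lie algebra of primitive elements. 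To identify $\mathfrak{n}^{\alpha}_X$ with the span of indecomposable classes, one uses the Krull-Schmidt decomposition for $\Coh^{\alpha}(X_{\Delta})$ to show that indecomposable classes generate $H_{\Coh^{\alpha}(X_{\Delta})}$ as an associative algebra, and a Poincar\'e-Birkhoff-Witt dimension count in each graded piece pins down the span of indecomposables as the primitive subspace. The main obstacle is the finitarity argument in part (1): the ambient category $\Coh(X_{\Delta})$ is not finitary even for $X_{\Delta} = \Pone$, so the type-$\alpha$ hypothesis must carry the entire argument. Producing the requisite finite combinatorial parametrization of type-$\alpha$ sheaves on each affine chart, and ensuring that global gluing preserves the finite bounds on $\on{Hom}$ and $\on{Ext}^1$, is where the substantive work lies.
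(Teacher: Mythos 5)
Your part (2) and the proto-abelian structure in part (1) follow the paper's route (Milnor--Moore plus Krull--Schmidt, and locality of the type-$\alpha$ condition), but the finitarity argument --- which you correctly identify as the crux --- has a genuine gap. Your plan is to count morphisms and extensions directly on each affine chart, claiming that a type-$\alpha$ coherent sheaf on a chart has a ``finite combinatorial parametrization'' and that ``coherence of the target leaves only finitely many choices'' for the images of generators. This is false locally: a finitely generated module over a monoid such as $\langle t \rangle$ is typically an \emph{infinite} pointed set (e.g.\ the free module of rank one), so $\on{Hom}$ on a single chart is generally infinite, and finiteness of global $\on{Hom}$ comes from projectivity and the gluing constraints, not from any chart-by-chart bound. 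More seriously, the $\on{Ext}$ bound is asserted only ``by a parallel argument,'' with no mechanism; this is exactly where the difficulty lives, since $\Coh(\mathbb{P}^1)$ already admits infinitely many inequivalent extensions between fixed coherent sheaves. The paper's proof instead embeds everything into the finite-field realization: for a finite field $k$, the scalar extension maps $\on{Hom}_{\Coh^{\alpha}(X_{\Delta})}(\F,\F')\to\on{Hom}(\F_k,\F'_k)$ and $\E_k:\on{Ext}_{\Coh^{\alpha}(X_{\Delta})}(\F',\F)\to\on{Ext}^1_{\Coh(X_{\Delta,k})}(\F'_k,\F_k)$ are injective, and the targets are finite-dimensional $k$-vector spaces because $X_{\Delta,k}$ is projective. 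The injectivity of $\E_k$ is precisely where the type-$\alpha$ hypothesis enters (Lemma \ref{lemma: unique lift} and Proposition \ref{ext_scalar_extension}); your proposal does not supply a substitute for this step.

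A secondary issue: the comultiplication you describe, $\delta_{[M]}\mapsto\sum_{N\subset M}\delta_{[N]}\otimes\delta_{[M/N]}$, is the Green-type coproduct, for which the primitive elements would be the classes of objects with no proper nonzero subobjects, i.e.\ the simples. The paper uses the coproduct $\Delta(f)([X],[Y])=f([X\oplus Y])$ of \eqref{coprod-def}, which relies on the fact that every subobject of $X\oplus Y$ splits as $X'\oplus Y'$; it is this choice that makes the primitives exactly the indecomposable classes, as required for the stated description of $\mathfrak{n}^{\alpha}_X$.
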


We note that the standard cohomological machinery available for abelian categories of coherent sheaves on ordinary schemes has yet to be developed in the $\fun$ world. We prove the above theorem by identifying $\on{Ext}(\F, \F')$ for $\F, \F' \in \Coh^{\alpha}(X)$ with a subset of $\on{Ext}^1_{\Coh(X_{\Delta,k})} (\F_k, \F'_k)$ where $k$ is a finite field, $X_{\Delta,k}$ denotes the ordinary toric variety over $k$ associated to $\Delta$, and $\F_k$ the scalar extension ($k$-linearization) of the sheaf $\F$. 

We may further refine the class of coherent sheaves under consideration by passing to a subcategory $\Coh^T(X) \subset \Coh^{\alpha}(X)$ of sheaves which locally (on affines) are of the form $\oplus^k_{i=1} \mc{I}_i/\mc{J}_i$, where $\mc{J}_i \subset \mc{I}_i \subset \mc{O}_X$ are ideal sheaves. When $X$ is smooth (in particular, each $S_{\sigma} \simeq \mathbb{Z}^n_{\geq 0}$ ), such sheaves are shown to  be glued from "skew shapes". Here by a skew shape we mean a convex sub-poset of $\mathbb{Z}_{\geq 0}^n$ under the partial order where $(x_1, \cdots, x_n) \geq (y_1, \cdots, y_n)$  if and only if $x_i \leq y_i $ for $i=1, \cdots, n$. For example, when $n=2$, the infinite shape

\begin{center}
\Ylinethick{1pt}
\gyoung(:\vdots:\vdots,;;,;;,;;,\bullet;;;;:\cdots,:;\bullet;;;:\cdots,::;\bullet;;:\cdots)
\end{center}
\Ylinethick{1pt}

carries an action of the free commutative monoid $\fun \langle x_1, x_2 \rangle$ generated by $x_1, x_2$,  with $x_1$ moving one box to the right, and $x_2$ one box up, until the edge of the diagram is reached, and $0$ beyond that. This yields a module on three generators (indicated by black dots),  which may be interpreted as describing a coherent sheaf on $\mathbb{A}^2$ supported on the union of the $x_1$ and $x_2$-axes. Similarly, the three-dimensional infinite plane partition %\textcolor{red}{don't forget about the picture later.}
\begin{center}
\includegraphics[scale=0.8]{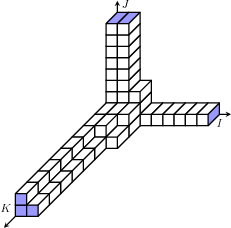}
\end{center}
describes an object of $\Coh^T(\mathbb{A}^3)$ supported on the union of the three coordinate axes. 

Since $\Coh^T(X)$ is a full subcategory of $\Coh^{\alpha}(X)$, it is finitary whenever $\Delta$ is the fan of a projective toric variety, and we may consider its Hall algebra $H^T_X$. We show:

\begin{inthm}[Theorem \ref{finitary_T}]
Let $\Delta$ be the fan of a projective toric variety, and $(X_{\Delta}, \mc{O}_X)$ the corresponding monoid scheme. Then
\begin{enumerate}
\item $\Coh^T(X_{\Delta})$ is a finitary proto-abelian category.  
\item The Hall algebra $H_{\Coh^T(X_{\Delta})}$ is isomorphic, as  a Hopf algebra, to an enveloping algebra $U(\mathfrak{n}^{T}_X)$, where $\mathfrak{n}^{T}_X$ has the indecomposable sheaves in $\Coh^{T}(X_{\Delta})$ as a basis. 
\end{enumerate}

\end{inthm}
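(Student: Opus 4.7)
The plan is to bootstrap from Theorem~\ref{finitary_alpha} by exploiting that $\Coh^T(X)$ sits as a full subcategory of $\Coh^{\alpha}(X)$ cut out by a local grading condition. For part (1), I would first verify that the proto-abelian structure on $\Coh^{\alpha}(X)$ restricts to $\Coh^T(X)$, meaning that $\Coh^T(X)$ is closed under sub-objects, quotients, and admissible extensions taken in $\Coh^{\alpha}(X)$. This reduces to a local question on each maximal affine chart $U_{\sigma}$, where membership in $\Coh^T$ amounts to admitting a grading by the semigroup $S_{\sigma}$: gradings are clearly preserved under kernels (by restriction) and cokernels (by descent), while extensions can be equipped with a grading by choosing graded set-theoretic splittings on the underlying pointed sets. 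The local gradings glue across chart overlaps because the transition isomorphisms are torus-equivariant. Once closure is in place, finitariness of $\Coh^T(X)$ is immediate: for $\F, \F' \in \Coh^T(X)$, $\on{Hom}(\F,\F')$ and the set of equivalence classes of admissible short exact sequences injects into the corresponding sets for $\Coh^{\alpha}(X)$, which are finite by Theorem~\ref{finitary_alpha}.

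For part (2), once (1) is in hand, the Hall algebra $H^T_X := H_{\Coh^T(X)}$ is well-defined. The closure of $\Coh^T(X)$ under the operations appearing in the Hall product (\ref{hall_multiplication}) and the coproduct guarantees that the subspace of $H^{\alpha}_X$ spanned by $\{\delta_{[\F]} : \F \notin \Coh^T(X)\}$ is a two-sided Hopf ideal, and one identifies $H^T_X$ with the quotient. The grading (e.g.\ by $K_0$-class or total length), connectedness, and cocommutativity of $H^{\alpha}_X$ descend to $H^T_X$, so the latter is a graded connected cocommutative Hopf algebra over $\mathbb{C}$.

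Finally, I would invoke the Milnor-Moore theorem to conclude $H^T_X \simeq U(\mathfrak{n}^T_X)$ where $\mathfrak{n}^T_X := \on{Prim}(H^T_X)$. Producing the asserted basis uses the Krull-Schmidt property for $\Coh^T(X)$ together with the specific form of the coproduct in the proto-exact setting, adapting the mechanism from \cite{Sz1, Sz4, EJS}: the primitive elements are precisely the linear span of $\delta_{[\F]}$ for $\F$ indecomposable. I expect the main technical obstacle to be the closure of $\Coh^T(X)$ under extensions in $\Coh^{\alpha}(X)$; one has to verify that an extension of two $T$-equivariant sheaves always admits a compatible $S_{\sigma}$-grading on each chart, which requires a careful analysis of extensions of pointed monoid modules with respect to semigroup gradings and of the compatibility of these gradings across chart overlaps.
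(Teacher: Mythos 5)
Your overall strategy is the paper's: establish that $\Coh^T(X_\Delta)$ is a full proto-abelian subcategory of $\Coh^{\alpha}(X_\Delta)$ by an affine-local argument (this is Proposition \ref{pa_t}, resting on Propositions \ref{pe_subcat} and \ref{graded_properties}), deduce finitariness from the injections of the $\on{Hom}$ and $\on{Ext}$ sets into those of the finitary category $\Coh^{\alpha}(X_\Delta)$ supplied by Theorem \ref{finitary_alpha}, and then apply Proposition \ref{Hall_theorem} (Milnor--Moore together with the Krull--Schmidt property) for part (2). In these essentials the proposal is correct and coincides with the paper's proof.

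The one substantive issue is that you single out closure of $\Coh^T(X)$ under extensions taken in $\Coh^{\alpha}(X)$ as the main technical obstacle. It is neither needed nor correctly justified by your sketch. It is not needed because (i) Proposition \ref{pe_subcat} shows that a full subcategory of $\Amod$ closed under submodules and quotients inherits a proto-exact structure: the biCartesian completions demanded by Definition \ref{pe_def}(4),(5) are a quotient and a preimage (Remark \ref{pe_structure_Amod}), never a genuinely new extension; and (ii) for finitariness one only needs that $\on{Ext}_{\Coh^T(X)}(\F',\F)$ --- which by definition consists of admissible sequences whose middle term already lies in $\Coh^T(X)$ --- injects into $\on{Ext}_{\Coh^{\alpha}(X)}(\F',\F)$, and fullness guarantees this whether or not every $\alpha$-extension of $T$-sheaves is again a $T$-sheaf. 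Moreover the justification you offer would not work as stated: the grading one would place on the middle term $P = N \vee M$ via a set-theoretic splitting (degrees from $N$ on $N$, degrees pulled back from $M$ on the complement) is in general incompatible with the $A$-action, since if $a\cdot p \in N$ for $p \notin N$ the degree of $a\cdot p$ in $N$ need not equal $\deg(a)+\deg(p)$. For instance, with $A=\langle t\rangle$, $N=\{*,n\}$ graded by $\deg n =5$, and $P=\{*,n,p\}$ with $tp=n$, the splitting forces $\deg p =0$ while the action forces $\deg n =1$; here $P$ does admit a grading, but not one induced from the splitting. Similarly, no gluing of gradings across charts is required: Definition \ref{Tsheaf} only asks that the restriction to each maximal chart admit some $S_\sigma$-grading, with no compatibility imposed on overlaps, which is exactly why the paper's Proposition \ref{pa_t} can glue along non-maximal cones without further argument.
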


In addition to $\Coh^T(X)$, we also consider subcategories $\Coh^T(X)_Z \subset \Coh^T(X)$ of sheaves supported in a closed subset $Z \subset X$, and $\Coh^T(X)_{\mc{I}} \subset \Coh^T(X)$ of sheaves scheme-theoretically supported in a closed subscheme determined by a quasicoherent sheaf of ideals $\mc{I} \subset \mc{O}_X$. These are also proto-abelian, and finitary whenever $\Coh^T(X)$ is, allowing us to define Hall algebras. 

The last section of the paper is devoted to computing the Hall Lie algebra $\mathfrak{n}^T_X$ for various monoid schemes $X$, and whenever possible, relating these to previously studied Lie algebras such as loop algebras.  In non-trivial examples, the Lie algebras $\n^T_X$ are large infinite-dimensional Lie algebras whose complexity grows rapidly with $dim(X)$. 

\begin{itemize}
\item When $X=\mathbb{P}^1$, $\n^T_X$ was computed by the second author in \cite{Sz1}, where the problem of Hall algebras of monoid schemes was first considered. It was shown to be isomorphic to a non-standard Borel subalgebra in $\mathfrak{gl}_2 [t,t^{-1}]$.
\item The Hall algebra of the category $\Coh^T(\mathbb{A}^n)_0$ of point sheaves supported at the origin in $\mathbb{A}^n$ was studied in \cite{Sz0} and described in terms of a Lie algebra of skew shapes. 
\item We compute the Hall algebra of the subcategory of $\Coh^T(\mathbb{A}^2)$ consisting of sheaves supported on the second infinitesimal neighborhood of the origin, and show that  its Hall Lie algebra embeds into $\mathfrak{gl}_2 [t]$. 
\item We consider a certain subcategory of $\Coh^T(\mathbb{A}^2)_0$ generated by skew shapes having at most two rows and show its Hall Lie algebra embeds into $\mathfrak{gl}_{\infty}[t]$.
\item We produce an explicit basis for the Hall Lie algebra of $\Coh^T(\mathbb{P}^2)$ by classifying all indecomposable $T$-sheaves on $\mathbb{P}^2$, and show that the Lie subalgebra generated by line bundles supported on the triangle of  $\mathbb{P}^1$'s surjects onto $\wt{\mathfrak{gl}}^-_{\infty}$, where the latter denotes lower-triangular $\mathbb{Z} \times \mathbb{Z}$ matrices. 
\end{itemize}

\subsection{Acknowledgements}
J.J. was supported by an AMS-Simons travel grant. M.S. is grateful to Olivier Schiffmann for many valuable conversations and suggestions, and to Tobias Dyckerhoff for answering several questions regarding the formalism of proto-exact/proto-abelian categories.  He also gratefully acknowledges the support of a Simons Foundation Collaboration Grant during the writing of this paper. 

\section{Proto-exact categories and their Hall algebras}

In this section we review the notions of proto-exact and proto-abelian categories and their Hall algebras introduced in \cite{DK, Dy}. 

\subsection{Proto-exact and proto-abelian categories} \label{pe_categories}

A commutative square 
\begin{equation}\label{comm_square}
\begin{tikzcd}
X
\ar["i",r]
\ar["j",d,swap]
&
Y
\ar["j'",d]
\\
X'
\ar["i'",r,swap]
&
Y'
\end{tikzcd}
\end{equation}
is said to be biCartesian if it is both Cartesian and co-Cartesian.

\begin{mydef} \label{pe_def}
	A {\em proto-exact} category is a pointed category $\mathcal{C}$ equipped with two special classes of morphisms $\mathfrak{M}$ and $\mathfrak{E}$, called \emph{admissible monomorphisms} and \emph{admissible epimorphisms} respectively. The triple $(\mathcal{C},\mathfrak{M},\mathfrak{E})$ is required to satisfy the following properties.
	\begin{enumerate}
		\item 
		Any morphism $0 \to A$ is in $\mathfrak{M}$ and any morphism $A \to 0$ is in $\mathfrak{E}$.
		\item 
		The classes $\mathfrak{M}$ and $\mathfrak{E}$ are closed under composition and contain all isomorphisms.
		\item 
		A commutative square \eqref{comm_square} in $\C$ with
		$i,i' \in \MM$ and $j, j' \in \EE$ is Cartesian if and only if it is
		co-Cartesian.
		\item 
		Every diagram of the following form
		\[
		\begin{tikzcd}
		X
		\ar[r,"i",hook]
		\ar[d,"j",two heads,swap]
		&
		Y
		\\
		X'
		&
		\end{tikzcd}
		\]
		with $i \in \MM$ and $j \in \EE$
		can be completed to a biCartesian square \eqref{comm_square} with $i' \in \MM$ and $j' \in \EE$.
		
		\item 
		Every diagram of the following form
		\[
		\begin{tikzcd}
		&
		Y
		\ar[d,two heads,"j'"]
		\\
		X'
		\ar[r,hook,"i'",swap]
		&
		Y'
		\end{tikzcd}
		\]
		with $i' \in \MM$ and $j' \in \EE$
		can be completed to a biCartesian square \eqref{comm_square} with $i \in \MM$ and $j \in \EE$.
	\end{enumerate}
\end{mydef}

\begin{rmk}
	If $\MM$ is the class of all monomorphisms in $\mathcal{C}$ and $\EE$ is the class of all epimorphisms in $\mathcal{C}$, then $\mathcal{C}$ is said to be a \emph{proto-abelian} category \cite{Dy}.
\end{rmk}

A biCartesian square of the following form
\[
\begin{tikzcd}
X
\ar[r,hook,"i"]
\ar[d,two heads,"j",swap]
&
Y
\ar[d,two heads,"j'"]
\\
0
\ar[r,hook,"i'",swap]
&
Z
\end{tikzcd}
\]
with $i,i' \in \MM$ and 
$j.j' \in \EE$ is said to be an \emph{admissible short exact sequence} or an \emph{admissible
	extension of $Z$ by $X$}, and will also be denoted
\begin{equation}\label{ses}
\ses{X}{Y}{Z}.
\end{equation}
In this case we will frequently denote the object $Z$ (unique up to isomorphism) by $Y/X$.

Two extensions $\ses{X}{Y}{Z}$ and $\ses{X}{Y'}{Z}$ of
$Z$ by $X$ are \emph{equivalent} if there is a commutative diagram as follows:
\begin{equation} \label{cd_ses}
\begin{tikzcd}
X
\ar[r,hook]
\ar["\on{id}",d,swap]
&
Y
\ar[r,two heads]
\ar["\cong",d]
&
Z
\ar["\on{id}",d]
\\
X
\ar[r,hook]
&
Y'
\ar[r,two heads]
&
Z
\end{tikzcd}
\end{equation}
The set of equivalence classes of such sequences is denoted
$\on{Ext}_{\C}(Z,X)$. We will assume that $\C$ has the following additional properties:  

\begin{enumerate}
\item $\C$ has finite coproducts, which we denote by $X \oplus Y$. This, together with the fact that $\C$ is pointed, implies that there exist morphisms $\pi_X : X \oplus Y \rightarrow X $  and $\pi_Y : X \oplus Y \rightarrow Y $  such that the composition 
\[
X \longmapsto  X \oplus Y \overset{\pi_X}{\longmapsto} X
\]
is $\id_X$, and the composition 
\[
X \longmapsto  X \oplus Y \overset{\pi_Y}{\longmapsto} Y
\]
is $0$. 
\item The map $X \rightarrow X \oplus Y$ is in $\mathfrak{M}$, and $\pi_X \in \mathfrak{E}$
\end{enumerate}

A short exact sequence equivalent to one of the form $$ \ses{X}{X \oplus Y}{Y} $$ will be called a \emph{split exact sequence}. 

\begin{mydef}
Two admissible monomorphisms
$i_1 \colon X \hookrightarrow Y$ and $i_2 \colon X' \hookrightarrow Y$
are \emph{isomorphic} if there is an isomorphism
$f \colon X \rightarrow X'$ with $i_1= i_2 \circ f$.  The isomorphism
classes in $\MM$ are \emph{admissible sub-objects}.
\end{mydef}

\begin{mydef}
A functor $F \colon \C \to \mathcal{D}$ between proto-exact
categories is \emph{exact} when it preserves admissible short exact
sequences.  
\end{mydef}

\begin{mydef}\label{defi:finitary}
	A proto-exact category $(\mathcal{C},\MM,\EE)$ is {\em finitary} if, for every pair of
	objects $X$ and $Y$, the sets $\on{Hom}_{\C}(X,Y)$ and
	$\on{Ext}_{\C}(X,Y)$ are finite.
\end{mydef}

\begin{myeg}\label{example: proto-exact}
	The following are examples of proto-exact categories.
	\begin{enumerate}
		
		\item Any Quillen exact category is proto-exact, with the same exact
		structure. In particular, any abelian category $\C$ is proto-exact
		with $\MM$ all monomorphisms and $\EE$ all epimorphisms
		respectively. The category $\on{Rep}(Q, \mathbb{F}_q)$ of
		representations of a quiver $Q$ over a finite field $\mathbb{F}_q$
		and the category $\on{Coh}(X / \mathbb{F}_q)$ of coherent sheaves on a 
		projective variety $X$ over $\mathbb{F}_q$ are both finitary abelian.
		
		\item The category of pointed sets and pointed maps, with $\MM(X,Y)$ all pointed injections $X \hookrightarrow Y$, and $\EE(X,Y)$ all pointed surjections $f: X \twoheadrightarrow Y $ satisfying
		\begin{equation}  \label{spec_condition}  
		f \vert_{X \backslash f^{-1}(*)} \textrm{ is an injection }, \end{equation} is proto-exact. The full
		subcategory $\Set^{fin}$ of finite pointed sets is finitary. The category $\vfun$ of finite pointed sets together with all pointed maps satisfying the condition \eqref{spec_condition} is proto-abelian and finitary. It is a non-full subcategory of $\Set^{fin}$. 
		
		\item The category $\MS$ of pointed matroids and strong maps is finitary proto-exact \cite{EJS}.
		
		\item Suppose $\I$ is a small category, $\C$ proto-exact,  and $\on{Fun}(\I, \C)$ the category of functors from $\I$ to $\C$, with natural transformations as morphisms. $\on{Fun}(\I, \C)$ is proto-exact (see \cite{DK}), with 
		\begin{itemize} 
			\item $\MM(F,G) = \{ \textrm{ natural transformations } \phi: F \Rightarrow G \mid  \phi_X : F(X) \rightarrow G(X) \in \MM(F(X),G(X)) \} $
			\item $\EE(F,G) = \{ \textrm{ natural transformations } \phi: F \Rightarrow G \mid \phi_X : F(X) \rightarrow G(X) \in \EE(F(X),G(X)) \} $
		\end{itemize} 
		If $\C$ is proto-abelian, then $\on{Fun}(\I, \C)$ is proto-abelian. 
		
		\item As a special case of the previous example, a monoid $\A$ (see Section \ref{monoid_sec}) can be viewed as a category with one object $\{ \star \}$, and $\A = \on{Hom}( \star, \star)$. The category $\on{Fun}(A, \Set^{fin})$ is then equivalent to the category $\Amod$ of $\A$-modules (or $\A$-acts), whose objects are finite pointed sets with an action of $\A$. By the previous example, $\Amod$ is proto-exact, where for $M, N \in \Amod$
		\begin{itemize}
			\item $\MM(M,N) = \{ \phi \in \on{Hom}_{\Amod} (M,N) \mid \phi \textrm{ is injective } \}$
			\item $\EE(M,N) =  \{ \phi \in \on{Hom}_{\Amod} (M,N) \mid \phi \textrm{ is a surjection satisfying \eqref{spec_condition}} \}$
		\end{itemize}
		Similarly, the category $\on{Fun}(\A, \vfun)$ is proto-abelian, and may be identified with the category of \emph{type}-$\alpha$ modules $\Amod^{\alpha}$ in Section \ref{subsection: type=alpha modules}. Specializing further, denoting by $P(Q)$ the path monoid of a quiver $Q$, we obtain a proto-abelian category $\on{Rep}(Q, \fun) := \on{Fun}(P(Q), \vfun)$ of quiver representations over $\fun$ considered in \cite{Sz4}. 
	\end{enumerate}
\end{myeg}

\subsection{Hall algebras of finitary proto-exact categories}\label{subsection: Hall algebras of finitary proto-exact}

Let $\C$ be a finitary proto-exact category, and $k$ a field of
characteristic zero. Define the Hall algebra $\H_{\C}$ over $k$ as
\[
  \H_{\C}
  \coloneqq
  \{ f \colon \on{Iso}(\C) \rightarrow k \mid f
    \text{ has finite support} \},
\]
where $\on{Iso}(\C)$ denotes the set of isomorphism classes in
$\C$. The Hall algebra $\H_{\C}$ is an associative $k$--algebra under
the convolution product
\begin{equation}\label{hall_product}
  f \bullet g ([X])
  \coloneqq
  \sum_{X' \subset X} f([X/X'])g([X']),
\end{equation}
where the summation $\sum_{X' \subset X}$ is taken over isomorphism
classes of admissible sub-objects
$i \colon X' \hookrightarrow X, i \in \MM$, and $[-]$ denotes
isomorphism classes in $\C$.  The algebra $\H_{\C}$ has a basis of $\delta$-functions
$\set{\delta_{[X]}}{[X] \in \on{Iso}(\C)}$, where
\[
  \delta_{[X]} ([X'])
  =
  \begin{cases} 
    1 & X' \simeq X, \\
    0 & \textrm{ otherwise. }
  \end{cases}
 \]
The multiplicative unit of $\H_{\C}$ is $\delta_{[0]}$.  The structure
constants of this basis are given by
\[
  \delta_{[X]} \bullet \delta_{[Y]}
  =
  \sum_{[Z] \in \on{Iso}(\C)} g^Z_{X,Y} \delta_{[Z]},
\]
where
\[
  g^Z_{X,Y} = \#\{Z' \subset Z \mid Z'\simeq Y,Z/Z'\simeq X\}.
\]
Thus $g^Z_{X,Y}$ counts the number of admissible subobjects $Z'$ of $Z$
isomorphic to $Y$ such that $Z/Z'$ is isomorphic to $X$. 

Whether $\H_{\C}$ carries a co-multiplication compatible with \eqref{hall_product} 
depends on further properties of $\C$.  If $\C$ is finitary, abelian,
linear over $\mathbb{F}_q$, and hereditary, then $\H_{\C}$ carries the
so-called \emph{Green's co-multiplication} (see \cite{G}). We will be
concerned with situations where $\C$ is not additive, but where a simpler alternative construction applies.
To this end we assume that $\C$ has the following additional property:

\begin{enumerate}
\item[(3)] The only admissible sub-objects of $X \oplus Y$ are of the form $X' \oplus Y'$,
where $X' \subset X, Y' \subset Y$.
\end{enumerate}

We note that the categories $\Set$ and $\MS$ satisfy properties above $(1)-(3) $, but that $(3)$ generally fails for abelian categories. 
Define
\begin{equation}\label{coprod-def}
  \Delta \colon \H_{\C} \rightarrow \H_{\C} \otimes \H_{\C},
  \;
  \Delta(f)([X], [Y])\rightarrow f([X \oplus Y])
  .
\end{equation}
Under the assumptions $(1) - (3)$ on $\C$, $\Delta$ is easily seen to equip $\H_{\C}$ with a bialgebra structure. 
$\Delta$ is co-commutative by \eqref{coprod-def}, since $[X \oplus Y]=[Y \oplus X]$, and
the subspace of primitive elements (i.e. those satisfying $\Delta(x)=x\otimes 1 + 1\otimes x$) of $\H_{\C}$ is spanned by the set
of all $\delta_{[X]}$ for $X$ indecomposible (i.e. those $X$'s which cannot be
written as a non-trivial co-product).  
Furthermore, $\H_{\C}$ is naturally graded by $K_0 (\C)^+  \subset K_0 (\C)$, where  $K_0 (\C)^+$ denotes
the sub-semigroup generated by the effective classes, with $\deg(\delta_{[X]}) = [X] \in K_0 (\C)^+$. 
As any graded, connected, and co-commutative bialgebra is a Hopf algebra isomorphic to the enveloping algebra of the Lie algebra of its primitive elements by the Milnor-Moore theorem. We thus have the following(\cite{EJS}):
\begin{prop}\label{Hall_theorem}
  Let $\C$ be a finitary proto-exact category $\C$ satisfying
  additional properties $(1)-(3)$ above. 
  Then $\H_{\C}$ has the structure of a $K^+_0(\C)$-graded, connected,
  co-commutative Hopf algebra over $k$ with co-multiplication
  (\ref{coprod-def}).  Moreover
  $\H_{\C}$ is isomorphic as a Hopf algebra to the enveloping algebra  $ \mathbf{U}(\n_{\C})$, where $\n_{\C}$ is a $K^+_0(\C)$-graded Lie algebra with basis $\delta_{{[X]}}$, with $X$ indecomposable.
\end{prop}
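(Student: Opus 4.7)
The plan is to verify that $\H_{\C}$ satisfies the axioms of a $K_0^+(\C)$-graded, connected, co-commutative Hopf algebra over $k$, after which the statement follows from the Milnor-Moore theorem; the identification of the Lie algebra of primitives with $\n_{\C}$ will then reduce to a direct calculation using property (3).

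First, associativity of $\bullet$ follows from the proto-exact axioms in the familiar Hall-algebra way: unwinding $(f \bullet g) \bullet h([X])$ and $f \bullet (g \bullet h)([X])$ shows that both compute the same sum indexed by chains $X'' \subseteq X' \subseteq X$ of admissible sub-objects, with the requisite bijections between iterated extensions supplied by the biCartesian completions in axioms (3)--(5) of Definition \ref{pe_def}. Co-associativity of $\Delta$ is inherited from associativity of $\oplus$ up to isomorphism. The non-trivial piece is bialgebra compatibility, i.e. that $\Delta(f \bullet g) = \Delta(f) \cdot \Delta(g)$. On basis elements, this unwinds into a comparison of two sums over admissible sub-objects $W \subseteq Z_1 \oplus Z_2$ with prescribed quotient, versus sums over pairs $W_i \subseteq Z_i$; property (3) gives precisely the bijection $W \leftrightarrow (W_1,W_2)$ needed, and this is the step where the hypothesis is essential.

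Next, I would set up the $K_0^+(\C)$-grading by $\deg(\delta_{[X]}) = [X]$. Both $\bullet$ and $\Delta$ respect this, since in any admissible extension $\ses{X'}{Z}{X}$ one has $[Z] = [X] + [X']$ in $K_0(\C)$, and $[X \oplus Y] = [X] + [Y]$. Connectedness is immediate because $[X] = 0$ in $K_0^+(\C)$ forces $X \cong 0$, so the degree-zero piece is spanned by the unit $\delta_{[0]}$. Co-commutativity of $\Delta$ follows at once from $[X \oplus Y] = [Y \oplus X]$. With all of this in hand, $\H_{\C}$ is a $K_0^+(\C)$-graded, connected, co-commutative bialgebra over a field of characteristic zero, and is therefore automatically a Hopf algebra; the Milnor-Moore theorem then yields $\H_{\C} \cong \mathbf{U}(\mathrm{Prim}(\H_{\C}))$.

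It remains to identify $\mathrm{Prim}(\H_{\C})$ with $\n_{\C}$. If $X$ is indecomposable, any decomposition $X \cong A \oplus B$ has $A \cong 0$ or $B \cong 0$, so $\Delta(\delta_{[X]}) = \delta_{[X]} \otimes 1 + 1 \otimes \delta_{[X]}$; hence $\delta_{[X]}$ is primitive. For the converse, I would argue by induction on degree: for any decomposable $X \cong A \oplus B$ with $A, B \not\cong 0$, the term $\delta_{[A]} \otimes \delta_{[B]}$ occurs in $\Delta(\delta_{[X]})$ with nonzero coefficient and cannot be produced by any $\Delta(\delta_{[X']})$ with $[X'] \ne [X]$ of the same degree, forcing the coefficient of every decomposable basis element to vanish in a primitive element. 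The main obstacle I anticipate is the bialgebra compatibility, where one must cleanly match admissible sub-objects of $Z_1 \oplus Z_2$ with pairs of admissible sub-objects on each factor; a secondary technical point is a Krull-Schmidt-style uniqueness of decomposition into indecomposables, which ensures that $\{\delta_{[X]} : X \text{ indecomposable}\}$ is a well-defined basis of $\n_{\C}$ and which should be noted under the finitariness hypothesis before the final identification.
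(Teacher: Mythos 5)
Your proposal is correct and follows essentially the same route as the paper, which verifies the graded, connected, co-commutative bialgebra structure (with property (3) supplying the bialgebra compatibility) and then invokes the Milnor--Moore theorem to identify $\H_{\C}$ with $\mathbf{U}$ of its primitives, spanned by the $\delta_{[X]}$ for $X$ indecomposable. The paper itself leaves most of these verifications to the cited reference \cite{EJS}; your write-up simply makes them explicit, including the Krull--Schmidt point that the paper treats as implicit.
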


We will refer to $\n_{\C}$ as the \emph{Hall Lie algebra of $\C$}.

\begin{myeg}
Consider the category $\C := \vfun$. Coproducts in $\vfun$ correspond to wedge sums, and $\vfun$ is easily seen to satisfy the conditions of Proposition \ref{Hall_theorem}. 
% For a family $\{X_i\}_{i \in I}$ of objects in $\E$, the wedge sum $\bigvee_{i \in I} X_i$ is the quotient of the disjoint union $\bigsqcup_{i \in I} X_i$ by the equivalence relation identifying all bases points. $\E$ is equipped with direct sums, which is defined as wedge sums, and satisfies the conditions of the theorem.
 We have $\H_{\C} \simeq k[x]$, with
  \[
    \Delta(x) = x\otimes 1 + 1 \otimes x,
   \]
   where $x=\delta_{\{e,*\}}$ is the delta-function supported on the pointed set with one non-zero element $e$. 
\end{myeg}

\begin{myeg} Let $\langle t \rangle$ denote the free monoid on the generator $t$, and let $\C$ denote the category of nilpotent $\langle t \rangle$-modules. Indecomposable objects of $\C$ may be identified with rooted trees (see Example \ref{alphaA1}). $\H_{\C}$ is isomorphic to the dual of the Connes-Kreimer Hopf algebra of rooted trees \cite{CK, Sz2}.
\end{myeg}

\begin{myeg}
Let $Q_0$ be the Jordan quiver, and let $\C=\textrm{Rep}(Q_0,\mathbb{F}_1)_{\textrm{nil}}$ be the category of nilpotent representations of $Q_0$ over $\mathbb{F}_1$ (see \cite{Sz4}). The Hall algebra $H_\C$ is isomorphic as a Hopf algebra to the ring $\Lambda$ of symmetric functions. Under this isomorphism, the indecomposible objects in $\C$ correspond to the power sum symmetric functions in $\Lambda$.  
\end{myeg}

\begin{myeg}
When $\C=\MS$, $\H_{\C}$ is isomorphic to the dual of Schmitt's matroid-minor Hopf algebra (\cite{Sch, EJS}). 
\end{myeg}

\section{Monoid schemes} \label{msch}

In this section, we briefly review notions pertaining to monoid schemes and quasicoherent sheaves on them. The theory of monoid schemes was developed by Kato \cite{Kato}, Deitmar \cite{D1, D2, D3}, Connes-Consani-Marcolli \cite{CC1, CC2, CCM}, and Cortinas-Haesemayer-Walker-Weibel \cite{CHWW}. We refer the interested reader to  the first few sections of \cite{CHWW} for a concise summary.  

\subsection{Monoids and \textrm{MSpec}} \label{monoid_sec}

Recall that ordinary schemes are ringed spaces locally modeled on affine schemes, which are spectra of commutative rings.  A monoid scheme is locally modeled on an affine monoid scheme, which is the spectrum of a commutative unital  monoid with $0$. In the following, we will denote monoid multiplication by juxtaposition or $\cdot$. In greater detail:

A \emph{monoid} $A$ will be a commutative associative monoid with identity $1_A$ and zero $0_A$ (i.e. the absorbing element). We require
\[
 1_A \cdot a = a \cdot 1_A = a, \hspace{1cm} 0_A \cdot a = a \cdot 0_A = 0_A, \hspace{1cm} \forall a \in A.
\]
Maps of monoids are required to respect the multiplication as well as the special elements $1_A, 0_A$. An \emph{ideal} of $A$ is a subset $\a \subset A$ such that $\a \cdot A \subset \a$. A proper ideal $\p \subset A$ is \emph{prime} if $xy \in \p$ implies either $x \in \p$ or $y \in \p$, and \emph{maximal} if it is not properly contained in another proper ideal. Maximal ideals are prime.  A monoid $A$ is:
\begin{itemize}
 \item \emph{cancellative} if for $a, b, c \in A$, and $a \neq 0$, $ab = ac$ implies $b=c$
 \item  (following \cite{HW}) \emph{partially cancellative} if $ab=ac \neq 0$ implies $b=c$
 \end{itemize}

Given a monoid $A$, the topological space $\spec A$ is defined to be the set $$\spec A := \{ \p \mid \p \subset A \textrm{ is a prime ideal } \}, $$ with the closed sets of the form $$ V(\a) := \{ \p \mid \a \subset \p, \p \textrm{ prime } \}. $$ 
Given a multiplicatively closed subset $S \subset A$, the \emph{localization of $A$ by $S$}, denoted $S^{-1}A$, is defined to be the monoid consisting of symbols $$\{ \frac{a}{s} \mid a \in A, s \in S \},$$ with the equivalence relation $$\frac{a}{s} = \frac{a'}{s'}  \iff \exists \; s'' \in S \textrm{ such that } as's'' = a's s'', $$ and multiplication is given by $\frac{a}{s} \times \frac{a'}{s'} = \frac{aa'}{ss'} $. 

For $f \in A$, let $S_f$ denote the multiplicatively closed subset $\{ 1, f, f^2, f^3, \cdots  \}$. We denote by $A_f$ the localization $S^{-1}_f A$, and by $D(f)$ the open set $\spec A \backslash V(f) \simeq \spec A_f$, where $V(f) := \{ \p \in \spec A \mid f \in \p \}$. The open sets $D(f)$ form a basis for $\spec A$. 
$\spec A$ is equipped with a  \emph{structure sheaf} of monoids $\mc{O}_A$, satisfying the property $\Gamma(D(f), \mc{O}_A) = A_f$. Its stalk at $\p \in \spec A$ is $A_{\p} := S^{-1}_{\p} A$, where $S_{\p} = A \backslash \p$. 

A unital homomorphism of monoids $\phi: A \rightarrow B$ is \emph{local} if $\phi^{-1}(B^{\times}) \subset A^{\times}$, where $A^{\times}$ (resp.~$B^{\times}$) denotes the invertible elements in $A$ (resp.~$B$). 
A \emph{monoidal space} is a pair $(X, \mc{O}_X)$ where $X$ is a topological space and $\mc{O}_X$ is a sheaf of monoids. A \emph{morphism of monoidal spaces} is a pair $(f, f^{\#})$ where $f: X \rightarrow Y$ is a continuous map, and $f^{\#}: \mc{O}_Y \rightarrow f_* \mc{O}_X$ is a morphism of sheaves of monoids, such that the induced morphism on stalks $f^{\#}_\p : \mc{O}_{Y, f(\p)} \rightarrow f_* \mc{O}_{X, \p}$ is local. 
An \emph{affine monoid scheme} is a monoidal space isomorphic to $(\spec A, \mc{O}_A)$. 
Thus, the category of affine monoid schemes is opposite to the category of monoids. 
A monoidal space $(X,\mc{O}_X)$ is called a \emph{monoid scheme}, if for every point $x \in X$ there is an open neighborhood $U_x \subset X$ containing $x$ such that $(U_x, \mc{O}_X \vert_{U_x})$ is an affine monoid scheme. We denote by $\Msch$ the category of monoid schemes, with morphisms those of monoidal spaces. 

A monoid scheme is said to be \emph{partially cancellative} if it can be covered by spectra of partially cancellative monoids, and of \emph{finite type} if it has a finite covering by spectra of finitely generated monoids. Going forward, we make the following assumption:
\begin{center}
{\bf All monoid schemes appearing in this paper will be of finite type and partially cancellative}
\end{center}

\begin{myeg}
Let $\fun=\{0,1\}$, with multiplication defined by
\[
1\cdot 1=1, \quad 1 \cdot 0 = 0\cdot 1 =  0 \cdot 0 = 0.
\]
One can easily see that $\fun$ is the initial object in the category of monoids. $\fun$ is sometimes referred to as \emph{the field with one element}. 
\end{myeg}

The following example plays a role of the polynomial rings in our setting. 

\begin{myeg} \label{freecom}
Let $\fun\langle x_1,\dots,x_n\rangle:=\{x_1^{k_1}x_2^{k_2}\cdots x_n^{k_n} \mid k_i \in \mathbb{Z}_{\geq 0}\} \cup \{0\}$ be the set of monomials in variables $x_1,\dots ,x_n$ along with an extra element $0$. With the usual multiplication of monomials and $0\cdot x_1^{k_1}x_2^{k_2}\cdots x_n^{k_n}=x_1^{k_1}x_2^{k_2}\cdots x_n^{k_n}\cdot 0 =0$, $\fun\langle x_1,\dots,x_n\rangle$ is a monoid. We will frequently use the multi-index notation to write elements in $\fun\langle x_1,\dots,x_n\rangle$. For instance, with $a=(a_1,\dots,a_n) \in \mathbb{Z}_{\geq 0}^n$, we write $x^a:=x_1^{a_1}x_2^{a_2}\cdots x_n^{a_n}$.  
\end{myeg}

\begin{myeg} \label{example: P1}

Let 
\[
\langle t \rangle := \fun \langle t \rangle  = \{ 0, 1, t, t^2, \cdots, t^n, \cdots \},
\]
and let $\mathbb{A}^1 := \spec \, \langle t \rangle $ - \emph{the monoid affine line}.
Let $\langle t, t^{-1} \rangle$ be the following monoid 
\[
\langle t,t^{-1} \rangle := \{ \cdots, t^{-2}, t^{-1}, 1, 0, t, t^2, \cdots \}.
\]
We obtain the following diagram of inclusions
%\marginpar{\hfill no label}
\begin{equation*}
\langle t \rangle \hookrightarrow \langle t,t^{-1} \rangle \hookleftarrow \langle t^{-1} \rangle.
\end{equation*}
By taking spectra, and denoting by $U_0 = \spec \, \langle t \rangle , U_{\infty} = \spec \, \langle t^{-1} \rangle $, we obtain
the following diagram
%\marginpar{\hfill nolabel; .}
\begin{equation*} %\label{P1_gluing}
\mathbb{A}^1 \simeq U_0 \hookleftarrow U_0 \cap U_{\infty} \hookrightarrow U_{\infty} \simeq \mathbb{A}^{1}.
\end{equation*}
We define $\mathbb{P}^{1}$, the \emph{monoid projective line}, to be the
monoid scheme obtained by gluing two copies of $\mathbb{A}^1$
according to the maps in the above diagram. %\ref{P1_gluing}. 
%\marginpar{\hfill this diagram}
$\mathbb{P}^{1}$ has three points - two closed points $0 \in U_0, \;  \infty \in U_{\infty}$, and the generic point $\eta$. We denote the corresponding inclusions by $\iota_{0} : U_0 \hookrightarrow \Pone$, $\iota_{\infty}: U_{\infty} \hookrightarrow \Pone$. 

\end{myeg}

\subsection{Coherent sheaves} \label{coh_sheaves}

%\subsubsection{Vector spaces over $\fun$}

%In this section we recall the category of vector spaces over $\fun$ following \cite{KapS, H}. 
%\begin{mydef}
%The category $\VFun$ of vector spaces over $\fun$ is defined as follows. 
%\marginpar{'and' added}
%\begin{eqnarray*}
%\on{Ob}(\VFun) & := & \{\textrm{ pointed sets } (V, *_V ) \} \\
%\on{Hom}(V,W) & := & \{ \textrm{ maps } f: V \rightarrow W \vert \;
%f(*_V) = *_W \\ & & \text{and }f \vert_{V \backslash f^{-1}(*_W)} \textrm{ is an injection }\}
%\end{eqnarray*}
%Composition of morphisms is defined as the composition of maps of
%sets, and so is associative. We refer to the unique morphism $f \in
%\on{Hom}(V,W)$ such that $f(V)= *_W$ as the zero map. 
%\marginpar{\hfill unused sent. out}
%If $V \in \VFun$ is a finite set, we define the \emph{dimension} of $V \in \VFun$ as $\dim(V) := |V| := \#V -1$ (i.e. we do not count the basepoint). 
%\end{mydef}

%\marginpar{start w/English}
%Now $\on{Hom}(V,W)$ is a pointed set, with distinguished element the zero
%map. Thus \\ \mbox{$\on{Hom}(V,W) \in \VFun$.} For a fixed $V \in
%\VFun$, $\on{End}(V) := \on{Hom}(V,V) $ has the structure of a
%\marginpar{nc monoid}
%(generally) non-commutative monoid %semigroup 
%with $0$ (the zero map) and $1$ (the identity map). 

\subsubsection{$A$-modules} \label{Amod}

Let $A$ be a monoid. An \emph{$A$--module} is a pointed set $(M,*_{M})$ together with an action 
\[
\mu:A \times M \to M, \quad (a,m) \rightarrow a\cdot m
\]
which is compatible with the monoid multiplication; $1_A \cdot m = m$, $a \cdot (b \cdot m) = (a \cdot b) \cdot m$, and $0_A \cdot m = *_{M} \; \forall m \in M$. We will refer to  $M \backslash *_M$ as \emph{non-zero} elements, and to $*_M$ as the zero element, sometimes denoting it by $0$. 
A \emph{morphism of $A$--modules} $f: (M,*_M) \rightarrow (N,*_N)$ is a map of pointed sets (i.e. we require $f(*_M) = *_N$) which is compatible with the action of $A$; $f(a \cdot m) = a \cdot f(m)$. $Hom_A(M,N)$ is an $A$-module via the action $(a \cdot f) (m) := a \cdot (f(m))$.  We denote by $\Amod$ the category of $A$-modules.

%\marginpar{submodule}
A pointed subset $(M',*_M) \subset (M,*_M)$ is called an \emph{$A$--submodule} if $A \cdot M' \subset M'$. In this case we may form the quotient module $M/M'$, where $M/M' := M \backslash (M' \backslash *_M)$, $*_{M/M'} = *_M$, and the action of $A$ is defined by setting 
$$a \cdot \overline{m} = \left\{ \begin{array}{ll} \overline{a \cdot
m} & \textrm{if } a \cdot m \notin M' \\ *_{M/M'} & \textrm{ if }
a\cdot m \in M'    \end{array} \right. $$  
where $\overline{m}$ is an element $m$ in $M$ considered as an element of $M/M'$. 
%\marginpar{unused sent. out}
If $M$ is finite, we define $|M| = \# M -1 $, i.e. the number of non-zero elements. 

In the language of Section \ref{pe_categories}, the category $\Amod$ is equivalent to the category of functors $\on{Fun}(A,\Set^{fin})$, where $A$ is viewed as a one-object category,  and is therefore proto-exact (see Example \ref{example: proto-exact} ). 

\medskip

\begin{rmk} \label{pe_structure_Amod}
It will be useful in future discussions to make explicit the proto-exact structure of $\Amod$, i.e. to have an explicit description of the objects completing the biCartesian diagrams in Definition \ref{pe_def}. We begin by noting that all morphisms in $\EE$ are of the form $X \twoheadrightarrow X/V$, where $V \subset X$ is a sub-module, and all morphisms in $\MM$ are pointed inclusions $V \hookrightarrow X$. 

\begin{itemize}
\item For the diagram appearing in Definition \ref{pe_def}(4), we may identify $j: X \twoheadrightarrow X'$ with $j: X \twoheadrightarrow X/V$. $Y'$ can then be identified with $Y/i(V)$, where all the maps are the obvious ones. 
\item For the diagram appearing in Definition \ref{pe_def}(5), $X$ can then be identified with $j'^{-1}(i'(X'))$, where again all the maps are the obvious ones. 
\end{itemize}
The admissible exact sequences in $\Amod$ are thus all isomorphic to ones of the form:
$$ V \hookrightarrow X \twoheadrightarrow X/V $$
where $V \subset X$ is a sub-module. 
\end{rmk}

The following is an immediate consequence of Remark \ref{pe_structure_Amod}

\begin{prop} \label{pe_subcat}
Let $\C$ be a full subcategory of $\Amod$ containing the $0$ module, and closed under taking sub-modules and quotients. Then $\C$ is proto-exact with the induced structure. 
\end{prop}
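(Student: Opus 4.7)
The plan is to verify the five axioms of Definition~\ref{pe_def} for $\C$ equipped with the induced classes $\MM_{\C} := \MM \cap \on{Mor}(\C)$ and $\EE_{\C} := \EE \cap \on{Mor}(\C)$, using throughout the explicit description of the proto-exact structure on $\Amod$ recorded in Remark~\ref{pe_structure_Amod}. Axioms (1) and (2) are essentially formal: since $\C$ is full in $\Amod$ and contains $0$, the morphisms $0 \to X$ and $X \to 0$ lie in $\C$ and are admissible there because they are admissible in $\Amod$; closure of $\MM_{\C}$ and $\EE_{\C}$ under composition, and containment of all isomorphisms, are inherited from $\Amod$ by fullness.

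For axioms (4) and (5) the key observation is the following. Remark~\ref{pe_structure_Amod} identifies the biCartesian completion in $\Amod$ of a cospan of type (4) with a quotient $Y/i(V)$ of an object of $\C$, and of a span of type (5) with a sub-module $j'^{-1}(i'(X'))$ of an object of $\C$. In both cases the completing object is obtained from an object of $\C$ by one of the two operations under which $\C$ is assumed closed, so it lies in $\C$. Fullness then guarantees that the completing morphisms retain their admissible types when viewed in $\C$, and that the resulting biCartesian squares in $\Amod$ are also biCartesian in $\C$.

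Axiom (3) is handled last, and is the only place requiring a moment's thought. Given a commutative square in $\C$ with $i,i' \in \MM_{\C}$ and $j, j' \in \EE_{\C}$, the equivalence ``Cartesian $\iff$ co-Cartesian'' holds in $\Amod$. By axioms (4) and (5) just verified, the biCartesian completion of either half of such a square already lives in $\C$, so fullness of $\C$ forces the Cartesian (resp.\ co-Cartesian) property of the square computed in $\C$ to agree with the one computed in $\Amod$; the biconditional therefore transfers. The only obstacle in the whole argument is ensuring that the pullbacks and pushouts relevant to the biCartesian completions in $\Amod$ are computed by objects that already lie in $\C$, and this is exactly what the closure hypotheses under sub-modules and quotients were designed to deliver.
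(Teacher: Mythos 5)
Your proof is correct and follows exactly the route the paper intends: the paper gives no written proof, deriving the proposition as an ``immediate consequence'' of Remark~\ref{pe_structure_Amod}, and your argument simply spells out that consequence — the biCartesian completions in $\Amod$ are quotients (axiom 4) and submodule preimages (axiom 5), hence stay in $\C$ by hypothesis, with fullness transferring the universal properties and axiom (3). No issues.
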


\medskip

$\Amod$ has the following additional properties:
\begin{enumerate}
\item $\Amod$ has a zero object, namely the one-element
pointed set $\{ * \}$. 
%\marginpar{grammar} 
\item A morphism $f: (M,*_M) \rightarrow (N,*_N)$ has a kernel $(f^{-1}(*_N), *_M)$ and a cokernel $N/\on{Im}(f)$ which satisfy the usual universal properties.
\item $\Amod$ has coproducts: $M \oplus N := M \vee N := (M \sqcup N) / \langle *_M \sim *_N\rangle $ which we will call ``direct sum''. 
\item If $L \subset M \oplus N$ is an $A$--submodule, then $L= (L \cap M) \oplus (L \cap N) $.  
\item $\Amod$ has a symmetric monoidal structure  $M \otimes_A N := M \wedge_A N := M \times N / \sim $, where $\sim$ is the equivalence relation generated by  $(a \cdot m, n) \sim (m, a \cdot n)$.  
\item $M \otimes_A (N \oplus N') \simeq M \otimes_A N \oplus M \otimes_A N'$.
\end{enumerate}

\bigskip

$M$ is said to be \emph{free of rank n} if $M \simeq \oplus^n_{i=1} A$ and \emph{finitely generated} if there exists a surjection $\oplus^n_{i=1} A \twoheadrightarrow M$ of $A$--modules for some $n$. In other words, this means that there exist $m_1, \cdots, m_n \in M$ such that for every $m \in M$, there exists an $a \in A $ such that $m = a \cdot m_i$ for some $1\leq i \leq n$, and we call the $m_i$ \emph{generators}. We denote by $\Amod^{fg}$ the full subcategory of $\Amod$ consisting of finitely generated modules. 

For an element $m \in M$, define $$\textrm{Ann}_A (m) := \{ a \in A \mid a \cdot m = *_M \}.$$ This is an ideal in $A$, and  $0_A \in \textrm{Ann}_A (m) \;  \forall m \in M$. An element $m \in M$ is said to be \emph{torsion} if $\textrm{Ann}_A (m) \neq \{ 0_A \}$. The subset of all torsion elements in $M$ is an $A$--submodule, called the \emph{torsion submodule} of $M$, and denoted $M_{tor}$. An $A$--module $M$ is \emph{torsion-free} if $M_{tor} = \{ *_M \}$ and \emph{torsion} if $M_{tor} = M$. We define the ideal $\textrm{Ann}_A(M)$ by $$\textrm{Ann}_A(M) = \bigcap_{m \in M} \textrm{Ann}_A(m)  \subset A. $$ 
We note that every $M \in \Amod$ can be uniquely written $M = M_{tor} \oplus M_{tf}$, where $M_{tf}$ is torsion-free. 
%\marginpar{unused sent. out}
%We define the \emph{length} of a torsion module $M$ to be $|M|$.

\bigskip

Given a multiplicatively closed subset $S \subset A$ and an $A$--module $M$, we may form the $S^{-1} A$--module $S^{-1}M$, where
\[
S^{-1} M := \{ \frac{m}{s} \mid \; m \in M, s \in S \} 
\]
with the following equivalence relation 
$$\frac{m}{s} = \frac{m'}{s'}  \iff \exists \; s'' \in S \textrm{ such that } s's'' m = s s'' m', $$
where the $S^{-1}A$--module structure is given by $\frac{a}{s} \cdot \frac{m}{s'} := \frac{am}{ss'}.$ $Hom_A (S^{-1}M, S^{-1}N)$ has an $S^{-1}A$-module structure via 
$$ (\frac{a}{s} \cdot f) (\frac{m}{s'}) := \frac{a}{s} \cdot f(\frac{m}{s'}) $$
For $f \in A$, we define $M_f := S^{-1}_f M$, and for a prime ideal $\p \subset A$, $M_{\p} := S^{-1}_{\p} M_{\p}$. The following properties, analogous to those for modules over rings, will be useful in what follows:

\medskip

\begin{prop} \label{module_localization_prop}
Let $A$ be a monoid, and $S \subset A$ a multiplicative subset.  
\begin{enumerate}
\item Localization with respect to $S$ is an exact functor of proto-exact categories from $\Amod$ to $\sAmod$. I.e. if $ J \hookrightarrow M \twoheadrightarrow N $ is a short exact sequence in $\Amod$, then $S^{-1}J \hookrightarrow S^{-1}M \twoheadrightarrow S^{-1}N $ is a short exact sequence in $\sAmod$. 
\item Suppose $M \in \Amod$, and $J \subset M$ is a submodule. There is an isomorphism of $S^{-1}A$-modules
\[
S^{-1}(M/J) \simeq S^{-1} M / S^{-1} J
\]
\item If $M, N \in \Amod$, and $M$ is finitely generated, then there are isomorphisms of $S^{-1}A$-modules
\[
S^{-1} \on{Hom}_A (M, N) \simeq \on{Hom}_{S^{-1}A} (S^{-1} M, S^{-1} N) \simeq \on{Hom}_A (S^{-1} M, S^{-1} N)
\]
\item If $A$ is finitely generated, and $M, N \in \Amod$ are finitely generated, then $\on{Hom}_A (M,N)$ is a finitely generated $A$-module.  
\end{enumerate}

\end{prop}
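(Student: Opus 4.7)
The plan is to prove the three statements in the order (2), (1), (3), since (1) uses (2) to identify $S^{-1}(M/J)$ with the quotient $S^{-1}M/S^{-1}J$, and all three are close monoid-theoretic analogues of familiar facts about localization of modules over a commutative ring. The key enabling observation is Remark \ref{pe_structure_Amod}: admissible monos in $\Amod$ are precisely pointed submodule inclusions and admissible epis are precisely the quotient maps $X \twoheadrightarrow X/V$, so verifying exactness of the functor $S^{-1}$ reduces to tracking pointed injectivity and the correct submodule/quotient structure through the localization equivalence $\frac{m}{s} \sim \frac{m'}{s'} \iff \exists\, s'' \in S : s's''m = ss''m'$.

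For (2), I would define the natural map $\Phi \colon S^{-1}M \to S^{-1}(M/J)$ by $\frac{m}{s} \mapsto \frac{\overline{m}}{s}$; this is well-defined because the localization relation is preserved by $M \twoheadrightarrow M/J$, and is obviously surjective. The crucial computation is that the preimage of the basepoint equals $S^{-1}J$: an element $\frac{m}{s}$ maps to $*$ iff some $s'' \in S$ satisfies $s''\overline{m} = *$ in $M/J$, equivalently $s''m \in J$, equivalently $\frac{m}{s} \in S^{-1}J$. A further chase of the equivalence relation shows $\Phi$ is injective off $S^{-1}J$, so $\Phi$ descends to an isomorphism $S^{-1}M/S^{-1}J \xrightarrow{\sim} S^{-1}(M/J)$. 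For (1), Remark \ref{pe_structure_Amod} presents every admissible SES in $\Amod$ as $J \hookrightarrow M \twoheadrightarrow M/J$, and applying $S^{-1}$ yields $S^{-1}J \to S^{-1}M \to S^{-1}(M/J)$. Injectivity of $S^{-1}J \to S^{-1}M$ follows because if $\frac{j}{s} = \frac{j'}{s'}$ in $S^{-1}M$, the witnessing $s''$ gives $s's''j = ss''j'$ inside $M$; since $J$ is a submodule and both sides lie in $J$, the same equality holds in $S^{-1}J$. By (2), the map $S^{-1}M \to S^{-1}(M/J)$ is the quotient by the submodule $S^{-1}J$, hence an admissible epi in $\sAmod$ by the same Remark, so the sequence is admissible.

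For (3), I construct the first isomorphism by sending $\frac{\phi}{s}$ to $\hat\phi_s \colon \frac{m}{t} \mapsto \frac{\phi(m)}{st}$, which is well-defined on equivalence classes, is $S^{-1}A$-linear, and is injective in $\phi/s$ by a direct chase of the same relation. Surjectivity is where finite generation enters: given $\Psi \in \on{Hom}_{S^{-1}A}(S^{-1}M, S^{-1}N)$ and generators $m_1, \ldots, m_n$ of $M$, write $\Psi(m_i/1) = n_i/s_i$ and set $s = s_1 \cdots s_n$. For each $m = a \cdot m_i \in M$ one has $s \cdot \Psi(m/1) = a \cdot \bigl( \prod_{j \neq i} s_j \bigr) n_i$ in $S^{-1}N$, which lies in the image of $N \to S^{-1}N$; choosing such lifts assembles into an $A$-linear map $\phi \colon M \to N$ with $\hat\phi_s = \Psi$, and the dependence on choices collapses inside $S^{-1}\on{Hom}_A(M,N)$. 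The second isomorphism is immediate: any $A$-linear $\Psi$ between localized modules is automatically $S^{-1}A$-linear because $s \cdot \frac{m}{s} = m$ forces $\Psi(\frac{m}{s}) = \frac{\Psi(m)}{s}$ by invertibility of $s$ on $S^{-1}N$. The main obstacle throughout is purely bookkeeping: the monoid-theoretic equivalence relation carries the auxiliary witness $s''$ at every step and there is no additive cancellation, so basepoints and the equivalence must be tracked carefully, especially in the fiber computation for (2); once that is in place, the arguments parallel the classical commutative-algebra case closely.
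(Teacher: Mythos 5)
Your parts (1) and (2) are correct, and they are exactly the argument the paper intends: the paper gives no written proof of this proposition, saying only that the statements are ``analogous to those for modules over rings, and proven in the same way,'' so your explicit computation of the fibre of $S^{-1}M \to S^{-1}(M/J)$ over the basepoint, combined with the description of admissible monos and epis from Remark \ref{pe_structure_Amod}, is the honest way to carry that out. One small correction to your part (3): injectivity of $\frac{\phi}{s}\mapsto \hat\phi_s$ also uses finite generation of $M$, not only surjectivity, since from the pointwise witnesses $s''_{m}$ with $ts''_m\phi(m)=ss''_m\psi(m)$ you must extract a single $s''\in S$ working for all of $M$, and you get it by taking the product of the witnesses over the finitely many generators.

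The genuine gap is in your surjectivity argument for part (3). Defining $\phi(a\cdot m_i)=a\cdot\bigl(\prod_{j\neq i}s_j\bigr)n_i$ is the right idea, but the problem is not that ``the dependence on choices collapses inside $S^{-1}\on{Hom}_A(M,N)$'' --- it is that $\phi$ may fail to be well defined at all. If a nonzero element of $M$ has two presentations $a\cdot m_i=b\cdot m_j$, you only know that $a\bigl(\prod_{k\neq i}s_k\bigr)n_i$ and $b\bigl(\prod_{k\neq j}s_k\bigr)n_j$ agree in $S^{-1}N$, i.e.\ after multiplying by a witness $u\in S$ that depends on that particular relation; if $M$ carries infinitely many such relations with no common witness in $S$, then no single $s$ clears them all and no $A$-linear lift $\phi$ exists. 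This is precisely the point where the classical statement for rings requires $M$ to be finitely \emph{presented} rather than finitely generated, and the same distinction matters here: for a general monoid $A$ one can produce a finitely generated $M$ for which surjectivity fails. To close the gap you need the congruence presenting $M$ on its generators to be finitely generated, which can be checked for the finitely generated toric monoids actually used in the paper; as stated, your argument (like the classical fact the paper is silently quoting) does not supply this.
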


\begin{proof}
The proofs of the analogous properties for commutative rings and modules carry over without any significant changes, and we comment only on $(4)$. Applying $\on{Hom}_A ( \cdot, N)$ to a surjection $\oplus^m_{i=1} A \twoheadrightarrow M$ yields  an injection $$\on{Hom}_A (M,N) \hookrightarrow \on{Hom}( \oplus^m_{i=1} A, N) \simeq \oplus^m_{i=1} \on{Hom}_A (A, N) \simeq N^{\oplus m}. $$ Since $N$ is finitely generated, so is $N^{\oplus m}$. One now checks that when $A$ is finitely generated, sub-modules of finitely generated $A$-modules are themselves finitely generated, from which the result follows. 
\end{proof}

\medskip

\subsubsection{Quasicoherent sheaves}

Let $(X, \mc{O}_X)$ be a monoidal space. A sheaf $\mc{M}$ of pointed sets on $X$ is an \emph{$\mc{O}_X$--module} if for every open set $U \subset X$, $\mc{M}(U)$ has the structure of an $\mc{O}_X (U)$--module with the usual compatibilities. Morphisms of $\mc{O}_X$-modules are defined as morphisms of sheaves of pointed sets commuting with the $\mc{O}_X$-action. In particular, given a monoid $A$ and an $A$--module $M$, there is an $\mc{O}_X$-module $\wt{M}$ on $(X=\spec A, \mc{O}_X)$, defined on basic open affines $D(f)$ by $\wt{M}(D(f)) := M_f $, whose stalk at $\p \in \spec A$ is isomorphic to $M_{\p}$. 

For a monoid scheme $(X, \mc{O}_X)$, an $\mc{O}_X$--module $\F$ is said to be \emph{quasicoherent} if for every $x \in X$  there exists an open affine $U_x \subset X$ containing $x$ and an $\mc{O}_X (U_x)$--module $M$ such that $\F \vert_{U_x} \simeq \wt{M}$. The module $\F$ is said to be \emph{coherent} if $M$ can always be taken to be finitely generated, and \emph{locally free} if $M$ can be taken to be free. For a monoid $A$, there is an equivalence of categories between the category of quasicoherent sheaves on $\spec A$ and the category of $A$--modules, given by $\Gamma(\spec A, \cdot)$. We denote by $\Qcoh(X)$ and $\Coh(X)$ the categories of quasicoherent and coherent sheaves on a monoid scheme $(X, \mc{O}_X)$. 

%A coherent sheaf $\F$ on $X$ is \emph{torsion} (resp. \emph{torsion-free}) if $\F(U)$ is a torsion $\mc{O}_X (U)$--module (resp. torsion-free $\mc{O}_X (U)$--module ) for every open affine $U \subset X$. 
% If $X$ is connected, we can define the \emph{rank} of a locally free sheaf $\F$ to be the rank of the stalk $\F_{x}$ as an $\mc{O}_{X,x}$--module for any $x \in X$. A locally free sheaf of rank one will be called a \emph{line bundle}.  {{red} a more intelligent discussion of rank for not-necessarily-locally-free sheaves }

If $\F, \F'$ are $\mc{O}_X$-modules, then $\mc{H}om_{\mc{O}_X}(\F, \F')$ is the $\mc{O}_X$-module defined by
\[
\mc{H}om_{\mc{O}_X}(\F, \F') (U) := \on{Hom}_{\mc{O}_X \vert_U} (\F \vert_U, \F' \vert_U).
\]
It follows from Proposition \ref{module_localization_prop} that if $\F, \F' \in \Coh(X)$, then $\mc{H}om_{\mc{O}_X}(\F, \F') \in \Coh(X)$ as well. 

\begin{mydef}
Let $(X, \mc{O}_X)$ be a monoid scheme, and $\F \in \Qcoh(X)$.
\begin{itemize}
\item The module $\F$ is \emph{torsion} (resp.~\emph{torsion-free}) if the stalk $\F_{x}$ is a torsion (resp.~torsion-free) $\mc{O}_{X,x}$--module for all $x \in X$. 
\item The \emph{support} of $\F$ is the subset
\[
supp(\F) := \{ x \in X \mid \F_x \neq 0 \}.
\] 
\end{itemize}
\end{mydef}
As in the case of ordinary schemes, one easily checks the following:
\begin{prop} Let $(X, \mc{O}_X)$ be a monoid scheme. 
\begin{enumerate}
\item If  $X = \spec A$, and $M \in \Amod$ is finitely generated, then $\emph{supp}(\wt{M}) = V(\emph{Ann}_A(M))$.
\item If $\F \in \Coh(X)$, then $\emph{supp}(\F)$ is a closed subset of $X$.
\item If $\F, \F' \in \Qcoh(X)$, $\F$ is torsion, and $\F'$ is torsion-free, then $\on{\mc{H}om}_{\mc{O}_X}(\F, \F') = 0$. 
\end{enumerate}
\end{prop}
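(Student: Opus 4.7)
The plan is to handle the three parts in order, with (2) reduced to (1) by passing to local affine charts and (3) treated directly on stalks.

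For part (1), I would show that both sides are equivalent to the existence of a generator $m_i$ of $M$ with $\on{Ann}_A(m_i) \subseteq \p$. Unwinding the construction, $\wt{M}_{\p} \cong M_{\p}$, and the equivalence relation defining $M_{\p}$ gives: $m/1 \neq 0$ in $M_{\p}$ iff no $s \in A \setminus \p$ satisfies $s \cdot m = 0$, iff $\on{Ann}_A(m) \subseteq \p$. Hence $\p \in \on{supp}(\wt{M})$ iff some $m \in M$ has $\on{Ann}_A(m) \subseteq \p$. Now pick generators $m_1, \ldots, m_n$ of $M$. A direct check shows $\on{Ann}_A(M) = \bigcap_i \on{Ann}_A(m_i)$, since any $a$ annihilating all generators annihilates every $b \cdot m_i$ by commutativity, and conversely any element of $M$ is of the form $b \cdot m_i$ so $\on{Ann}_A(m_i) \subseteq \on{Ann}_A(b \cdot m_i)$. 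The final ingredient is the monoidal analogue of the fact that if a prime $\p$ contains a finite intersection of ideals, it contains one of them — proved by the standard multiplicative contrapositive using that the complement $A \setminus \p$ is multiplicatively closed. These equivalences pin both sides to the condition that some $\on{Ann}_A(m_i) \subseteq \p$.

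Part (2) follows immediately from (1). Cover $X$ by open affines $U_i = \spec A_i$ on which $\F\vert_{U_i} \cong \wt{M_i}$ with $M_i$ finitely generated (this is possible by the definition of coherent). Part (1) gives $\on{supp}(\F) \cap U_i = V(\on{Ann}_{A_i}(M_i))$, which is closed in $U_i$. Since being closed is a local property on an open cover, $\on{supp}(\F)$ is closed in $X$. For part (3), I would argue stalkwise: any morphism $\phi : \F\vert_U \to \F'\vert_U$ induces stalk maps $\phi_x$ at each $x \in U$, and for $m \in \F_x$ the torsion hypothesis on $\F$ supplies a nonzero $a \in \mc{O}_{X,x}$ with $a \cdot m = 0$; then $a \cdot \phi_x(m) = \phi_x(a \cdot m) = 0$, and torsion-freeness of $\F'_x$ forces $\phi_x(m) = 0$. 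Hence every $\phi_x = 0$, so $\phi = 0$ on every open $U$, and $\mc{H}om_{\mc{O}_X}(\F, \F') = 0$.

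The main obstacle I anticipate is the purely monoid-theoretic bookkeeping in part (1) — verifying the two annihilator identities and the prime-intersection lemma in the non-additive setting, where "generated by $m_1,\ldots,m_n$" means that every element is of the form $a \cdot m_i$ (not a sum), so one cannot invoke the usual module-theoretic manipulations. Fortunately each step goes through essentially as in commutative algebra once this is kept in mind, and parts (2)–(3) are then routine.
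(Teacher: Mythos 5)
Your proposal is correct; the paper gives no argument here (it is stated with ``one easily checks,'' by analogy with ordinary schemes), and what you write is exactly the intended standard proof transported to monoids. The monoid-specific points are all handled properly: $m/1\neq 0$ in $M_{\p}$ iff $\on{Ann}_A(m)\subseteq \p$, the reduction of $\on{Ann}_A(M)$ to the finitely many generators using that every element has the form $a\cdot m_i$, the prime-avoidance step via multiplicative closedness of $A\setminus\p$, and the stalkwise vanishing in part (3).
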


If $Z \subset X$ is a closed subset, we will denote by $\Qcoh(X)_Z, \Coh(X)_Z$ the full subcategories of sheaves $\F$ such that $\emph{supp}(\F) \subset Z$. 

Any quasicoherent sheaf of ideals $\mc{I} \subset \mc{O}_X$ determines a closed subscheme $X_{\mc{I}}$ of $X$. Its intersection with an open affine subset $U \subset X$ is isomorphic to $\spec ( \mc{O}_X (U) / \mc{I}(U)) $. We note that not all closed subschemes of $(X, \mc{O}_X)$ are of this form \cite{CHWW}.

\begin{mydef}
Let $(X, \mc{O}_X)$ be a monoid scheme, and $\F \in \Coh(X)$. The \emph{scheme-theoretic support} of $\F$ is the closed subscheme associated to the coherent sheaf of ideals $\mc{A}nn (\F) \subset \mc{O}_X$, where $\mc{A}nn(\F)(U) := \textrm{Ann}(\F(U)) \subset \mc{O}_X (U)$. 
\end{mydef}

We note that closed subset of $X$ underlying the scheme $X_{\mc{A}nn(\F)}$ is precisely $\emph{supp}(\F)$. 

\begin{mydef}
Let $\mc{I} \subset \mc{O}_X$ be a quasicoherent sheaf of ideals, and $\F \in \Qcoh(X)$. We say that $\F$ is \emph{scheme-theoretically supported} on $X_{\mc{I}} \subset X$ if $\mc{I}\cdot \F =0$. 
\end{mydef}

This implies that $\mc{I} \subset \mc{A}nn(\F)$, or that the scheme-theoretic support of $\F$ is a closed subscheme of $X_{\mc{I}}$. We denote by $\Qcoh(X)_{\mc{I}}$ the full subcategory of $\Qcoh(X)$ consisting of quasicoherent sheaves supported on $X_{\mc{I}}$.  As in the case of ordinary schemes, the functor
\begin{align*}
\iota_*: \Qcoh(X_{\mc{I}}) & \rightarrow  \Qcoh(X)_{\mc{I}} \\
\F & \rightarrow  \iota_* \F
\end{align*}
is an equivalence, which restricts to an equivalence between $\Coh(X_{\mc{I}})$ and $\Coh(X)_{\mc{I}}$.

 The bifunctors $\oplus, \otimes$ on $\Amod$ induce corresponding bifunctors on $\Qcoh(X)$:

\begin{mydef} Let $(X, \mc{O}_X)$ be a monoid scheme, and $\F, \F' \in \Qcoh(X)$.
\begin{itemize}
\item The \emph{direct sum} of $\F$ and $\F'$, denoted $\F \oplus \F'$, is the element of $\Qcoh(X)$ defined by 
\[
\F \oplus \F' (U) := \F(U) \oplus \F'(U)
\] 
\item The \emph{tensor product} of $\F$ and $\F'$, denoted $\F \otimes_{\mc{O}_X} \F'$, is the element of $\Qcoh(X)$ obtained as the sheafification of the  presheaf $U \rightarrow \F(U) \otimes_{\mc{O}_X (U)} \F'(U)$. 
\end{itemize}
\end{mydef}

\medskip

\begin{rmk} \label{stalkvsaffine}

Every monoid $A$ has a unique maximal ideal $\mathfrak{m} = A \backslash A^{\times}$, the complement of the units of $A$, where $A_{\mathfrak{m}} = A$. It follows that every affine monoid scheme has a unique closed point, and at this point the stalk of the structure sheaf can be identified with the global sections. Similarly, if $M \in \Amod$, then $\wt{M}_{\mathfrak{m}} = M = \Gamma(\mspec (A), \wt{M})$.  Furthermore, as explained in \cite{CHWW}, for every monoid scheme $X$ of finite type, the assignment $ x \in X \mapsto \mspec(\mc{O}_{X,x}) $ yields a bijection between points of $X$ and open affine subsets. It follows from this that for monoid schemes of finite type, a property of sheaves is stalk-local if and only if it is affine-local. 

\end{rmk}

\subsubsection{Gluing for quasicoherent sheaves and morphisms} \label{gluing}

We will make use of the gluing construction for quasicoherent sheaves. Namely, suppose that $\mathfrak{U} = \{ U_i \}_{i \in I}$ is an open cover of $X$, and suppose we are given for each $i \in I$ a quasicoherent sheaf $\F_i$ on $U_i$, and for each $i,j \in I$ an isomorphism $\phi_{ij}: \F_i \vert_{U_i \cap U_j} \rightarrow \F_{j} \vert_{U_i \cap U_j}$ such that 
\begin{enumerate}
\item $\phi_{ii} = id$,
\item For each $i,j,k \in I$, $\phi_{ik} = \phi_{jk} \circ \phi_{ij} $ on $U_i \cap U_j \cap U_k$.
\end{enumerate}
Then there exists a unique quasicoherent sheaf $\F$ on $X$ together with isomorphisms $\psi_i: \F \vert_{U_i} \rightarrow \F_i$, such that for each $i,j, \; \psi_j = \phi_{ij} \circ \psi_i$ on $U_i \cap U_j$. If moreover the $\F_i$ are coherent then $\F$ is coherent. If $\mathfrak{U} = \{ U_i = \spec A_i \}_{i \in I}$ is an affine open cover, the same data can be presented as a collection $\{ M_i \}$ of $A_i$-modules with the obvious compatibilities. 

Similarly, given an open cover $\mathfrak{U} = \{ U_i \}_{i \in I}$ of $X$, and quasicoherent sheaves $\F, \F'$ presented in terms of gluing data $(\F_i, \phi_{ij})$, $(\F'_i, \phi'_{ij})$, a morphism $f: \F \rightarrow \F'$ is the data of morphisms $f_i: \F_i \rightarrow \F'_i$ such that $\phi'_{ij} \circ f_i = f_j \circ \phi_{ij}$ on $U_i \cap U_j$. If $\mathfrak{U} = \{ U_i = \spec A_i \}_{i \in I}$, and $\F_i = \wt{M_i}$, $\F'_i = \wt{M'_i}$, then the $f_i$ can be taken to be $A_i$-module homomorphisms $f_i : M_i \rightarrow M'_i$. 

\begin{myeg} \label{P1sheaves}

Given the description of $\Pone$ in Example \ref{example: P1}, a quasicoherent sheaf $\F$ on $\Pone$ can be viewed as a triple $(M, M', \phi)$, where 
\begin{itemize}
\item $M$ is a $\langle t \rangle$-module. 
\item $M'$ is a $\langle t^{-1} \rangle$-module
\item $\phi: M_t \rightarrow M'_{t^{-1}}$ is an isomorphism of $\langle t, t^{-1} \rangle$-modules. 
\end{itemize}

$\F$ is coherent if $M, M'$ are finitely generated. Similarly, a quasicoherent subsheaf $\F' \subset \F$ is given by submodules $N \subset M$, $N' \subset M'$, such that $\phi$ restricts to an isomorphism $N_t \rightarrow N_{t^{-1}}$. We have for instance the following:

\begin{itemize}
\item $(\langle t \rangle, \langle t^{-1} \rangle, \phi_n)$ where $\phi_n (u) = t^{-n} u$, $n \in \mathbb{Z}$. We denote this coherent sheaf by $\mc{O}(n)$ in analogy with usual setting. $\mc{O}(n)$ is locally free of rank one. 
\item $( \langle t \rangle/ (t^m), 0, 0)$, and $(0, \langle t^{-1} \rangle/ (t^{-m}),0)$, $m \geq 0$. We call these sheaves $\mc{T}_{0, m}$ and $\mc{T}_{\infty, m}$ respectively. These are torsion sheaves supported at $0, \infty$ respectively.  
\end{itemize}

If $0 \leq k, r \leq n$, then $(t^k\cdot M, t^{-r} \cdot M', \phi_n)$ is a  coherent subsheaf of $\mc{O}(n)$ isomorphic to $\mc{O}(n-k-r)$, which yields a short exact sequence (see the following section for the proto-exact structure on $\Qcoh(\Pone)$)
\[
\mc{O}(n-k-r) \hookrightarrow \mc{O}(n) \twoheadrightarrow \mc{T}_{0,k} \oplus \mc{T}_{\infty, r}
\]

\end{myeg}

\subsubsection{Proto-exact structure on $\Qcoh(X)$ and $\Coh(X)$}

We proceed to describe the proto-exact structure on the categories $\Qcoh(X)$ and $\Coh(X)$, which is obtained from the corresponding proto-exact structure on $\Amod$ explained in Example \ref{example: proto-exact} and Remark \ref{pe_structure_Amod}. Let $(X, \mc{O}_X)$ be a monoid scheme, and $\{ U_i = \mspec(A_i) \}_{i \in I}$ an affine cover.  For $\F, \F' \in \Qcoh(X)$, define 
$$\MM(\F,\F') := \{ \phi \in \on{Hom}_{\mc{O}_X} (\F, \F') \mid \phi \vert_{U_i} \in \MM(\F \vert_{U_i}, \F' \vert_{U_i}) ~\forall i \in I \}, $$
$$\EE(\F,\F') := \{ \phi \in \on{Hom}_{\mc{O}_X} (\F, \F') \mid \phi \vert_{U_i} \in \EE(\F \vert_{U_i}, \F' \vert_{U_i}) ~\forall i \in I \}, $$
where for every $i \in I$, $\phi \vert_{U_I}$ denotes the restriction of the morphism $\phi$ to $U_i = \mspec{A_i}$, where it can be identified with a morphism in $A_i - \on{Mod}$. In other words, $\phi$ is an admissible mono/epi if it locally admissible as a map of $A_i$-modules. 

By Remark \ref{stalkvsaffine}, this is equivalent to requiring the induced morphism $\phi_x \in \on{Hom}(\F_x, \F'_x) $ on stalks to be an admissible mono/epi for every $x \in X$, and furthermore independent of the affine cover of $X$. 

\begin{prop} \label{coh_pe_thm}
Let $(X, \mc{O}_X)$ be a monoid scheme.
\begin{enumerate}
\item $(\Qcoh(X), \MM, \EE)$ and $(\Coh(X), \MM, \EE)$ are proto-exact.
\item If $X = \spec A$, then the functor $$ \wt{}: \Amod \mapsto \Qcoh(X) $$ is an exact equivalence, restricting to an exact equivalence $$ \wt{}: \Amod^{fg} \mapsto \Coh(X).$$
\item If $Z \subset X$ is a closed subset, then $(\Qcoh(X)_Z, \MM, \EE)$ and $(\Coh(X)_Z, \MM, \EE)$ are proto-exact.
\item If $\mc{I} \subset \mc{O}_X$ is a quasicoherent sheaf of ideals, then $(\Qcoh(X)_{\mc{I}}, \MM, \EE)$ and $(\Coh(X)_{\mc{I}}, \MM, \EE)$ are proto-exact subcategories of $\Qcoh(X), \Coh(X)$ resp. 
\end{enumerate}
\end{prop}

\begin{proof}
For (1), we must verify that with the above definition, the bicartesian squares in Definition \ref{pe_def} involving $\MM$ and $\EE$ exist in $\Qcoh(X)$ (resp. $\Coh(X)$).   If $X = \spec A$ is affine, $\Qcoh(X)$ is equivalent to $\Amod$, and the statement follows from the fact that $\Amod$ is proto-exact. For general $X$, we may pass to an affine cover $\{ U_{i} = \spec A_{i} \} $, and compute the push-outs/pull-backs in Definition \ref{pe_def} in $\on{Mod}_{\on{A_{i}}}$. It follows from Remark \ref{pe_structure_Amod} and part (1) of Proposition \ref{module_localization_prop} that these glue. The proof for $\Coh(X)$ is identical, working with finitely generated modules. 

Part (2) is immediate from the definition. 

Part (3) follows from the fact that if $$ \ses{\F}{\F'}{\F''} $$ is an admissible short exact sequence, then $supp(\F') = supp(\F) \cup supp(\F'') .$

For part (4) $\Qcoh(X)_{\mc{I}}$ (resp.~$\Coh(X)_{\mc{I}}$) are equivalent to $\Qcoh(X_{\mc{I}})$ (resp.~$\Coh(X_{\mc{I}})$) via $\iota_*$ as explained above, and the latter are manifestly proto-exact. Alternatively, one observes that the question is local, and can be verified on affines, where the statement follows Proposition \ref{pe_subcat}. 

\end{proof}

\subsection{Realizations of monoid schemes and quasicoherent sheaves} \label{realization}
\medskip

In this section, we recall realizations of monoid schemes and quasicoherent sheaves. We refer the reader to \cite{CHWW} for details. 

Let $k$ be a commutative ring, and $A$ a monoid. Let $k[A]$ be the monoid algebra:
\[
k[A] := \left\{ \sum r_i a_i \mid a_i \in A, a_i \neq 0, r_i \in k \right\}
\]
with multiplication induced from the monoid multiplication.\footnote{Strictly speaking, we identify $0_k$ and $0_A$.} It is clear that any morphism $f:A_1 \to A_2$ of monoids induces a ring homomorphism $f_k:k[A_1] \to k[A_2]$ and hence we have a functor
\[
-\otimes k:\textbf{Monoids} \longrightarrow \textbf{Rings}, \quad 
A \otimes k:=k[A].
\]
which we refer to as \emph{scalar extension to $k$}. 
The functor $- \otimes k$ has a right adjoint sending $k$ to $(k, \times)$ - its underlying monoid, and so preserves colimits. 

Scalar extension may be used to define a functor  
\begin{align*}
-_k : \Msch  & \mapsto \on{Sch} / \on{Spec} k \\
               X & \mapsto X_k
\end{align*}
defined on affine monoid schemes by $\spec A_k := \on{Spec}(k[A])$, and for a general $X$ by gluing these over an affine cover. Following \cite{CHWW}, we refer to $X_k$ as the \emph{$k$-realization of $X$}. It may be thought of as a base-change from $\spec \fun$ to $\on{Spec} k $. The realization functor preserves limits.

One can similarly construct the scalar extension for quasicoherent sheaves. Given an $A$--module $M$, let
\[
k[M] := \left\{ \sum r_i m_i \mid m_i \in M, m_i \neq *, r_i \in k \right\}.
\]
$k[M]$ naturally inherits the structure of an $k[A]$--module. We may use this to define a realization functor
\begin{align*}
-_k: \Qcoh(X) & \mapsto \Qcoh(X_k) \\
\mc{F} & \mapsto \F_k
\end{align*}
It is defined on affines by assigning to $\wt{M}$ on $\mspec{A}$ the quasicoherent sheaf
\[
\wt{M}_k := \wt{k[M]}
\]
on $(\mspec(A))_k = \on{Spec}(k[A])$, and for a general monoid scheme by gluing in the obvious way. We have that $\F_k \in \Coh(X_k)$ when $\F \in \Coh(X)$. We have the following:

\begin{prop}
Let $X$ be a monoid scheme, and $k$ a commutative ring. $\F \mapsto \F_k$ is an exact functor from $\Qcoh(X)$ to $\Qcoh(X_k)$. 
\end{prop}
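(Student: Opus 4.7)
\smallskip

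\textbf{Proof plan.} The plan is to reduce immediately to the affine setting and then perform a direct computation with bases of free $k$-modules. By Proposition \ref{coh_pe_thm}(1) together with Remark \ref{pe_structure_Amod}, an admissible short exact sequence $\ses{\F}{\F'}{\F''}$ in $\Qcoh(X)$ is, after restriction to any affine open $U = \spec A \subseteq X$, isomorphic to one of the form $\ses{\wt{V}}{\wt{M}}{\wt{M/V}}$ for an $A$-module $M$ and an $A$-submodule $V \subseteq M$. The functor $\F \mapsto \F_k$ is defined by gluing the assignments $\wt{M} \mapsto \wt{k[M]}$ on affine opens, so exactness is a local question and I can check it on such affine pieces.

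Thus the core task is to show that for every $A$-module $M$ and submodule $V \subseteq M$, the sequence
\[
k[V] \hookrightarrow k[M] \twoheadrightarrow k[M/V]
\]
is a short exact sequence of $k[A]$-modules in the usual (abelian) sense. First I would note that $k[M]$ is, as a $k$-module, free on the set $M \setminus \{*_M\}$, so the obvious inclusion $k[V] \hookrightarrow k[M]$ corresponding to $V \setminus \{*_V\} \subseteq M \setminus \{*_M\}$ is an injection of $k[A]$-modules (it is $k[A]$-linear because $V$ is an $A$-submodule). The quotient $k[M]/k[V]$ is then the free $k$-module on the complement $(M \setminus \{*_M\}) \setminus (V \setminus \{*_V\}) = M \setminus V$.

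Next I would identify this quotient with $k[M/V]$. By definition $(M/V) \setminus \{*_{M/V}\} = M \setminus V$, so there is a canonical $k$-linear isomorphism $k[M]/k[V] \xrightarrow{\sim} k[M/V]$ on basis elements. The only thing to verify is compatibility with the $k[A]$-action: for a basis element $m \in M \setminus V$ and $a \in A$, the action on $k[M]/k[V]$ sends $m$ to $a \cdot m$ if $a \cdot m \notin V$ (viewed in the quotient) and to $0$ otherwise (either because $a \cdot m = *_M$, or because $a\cdot m \in V \setminus \{*_V\}$, which lies in $k[V]$). This matches the action on $k[M/V]$ coming from the $A$-module structure of $M/V$ recalled in Section \ref{Amod}. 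Hence the map is a $k[A]$-module isomorphism and the displayed sequence is exact.

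Finally I would glue: given an affine cover $\{U_\alpha = \spec A_\alpha\}$ of $X$, the three sheaves $\F_k, \F'_k, \F''_k$ are obtained by gluing the modules $k[M_\alpha], k[M'_\alpha], k[M''_\alpha]$, and the induced maps agree on overlaps because the affine-local construction is natural in $A$-module maps. Since exactness of a sequence of quasicoherent sheaves on a scheme can be checked on an affine cover, this completes the argument. I do not anticipate a genuine obstacle here; the only place that requires care is the identification $k[M]/k[V] \simeq k[M/V]$ as $k[A]$-modules, which is a direct bookkeeping check on basis elements.
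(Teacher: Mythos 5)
Your proof is correct and follows essentially the same route as the paper: reduce to the affine/local case and then verify that $k[V] \hookrightarrow k[M] \twoheadrightarrow k[M/V]$ realizes to a short exact sequence of $k[A]$-modules. The paper simply declares this last step "clear," whereas you spell out the basis bookkeeping; the content is the same.
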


\begin{proof}
Since exactness can be checked on stalks, it suffices to verify this when $X = \spec A$ is affine. It is clear that if $$ \ses{M}{P}{N} $$ is an admissible exact sequence in $\Amod$, then $$ 0 \rightarrow k[M] \rightarrow k[P] \rightarrow k[N] \rightarrow 0 $$ is an exact sequence of $k[A]$-modules. 
\end{proof}

Given a monoid scheme $X$ and $\F \in \Qcoh(X)$, we have for each open $U \subset X$ a map
\begin{equation} \label{basechange_sec}
\phi_k (U):  k[\Gamma(\F, U)] \rightarrow \Gamma(\F_k, U_k) 
\end{equation}
defined as the unique $k$--linear map with the property that $\phi_k (U) (s) = s \;  \forall s \in \Gamma(\F,U)$. When $U$ is understood, we will refer to this map simply as $\phi_k$.  

\subsection{Fans, monoid schemes, and toric varieties} \label{fans}

In this section we recall some basics of the relationship between fans, monoid schemes, and toric varieties. We refer the interested reader to \cite{CHWW} for details. 

We begin by recalling some terminology pertaining to cones and fans. If $N$ is a free abelian group of finite rank, we will denote by $N_{\mathbb{R}}$ the vector space $N \otimes_{\mathbb{Z}} \mathbb{R}$. Recall that: 
\begin{itemize}
\item A \emph{rational polyhedral cone} $\sigma \subset N_{\mathbb{R}}$ is a cone generated by finitely many elements $u_1, u_2, \cdots, u_s \in N$:
 \[ 
 \sigma = \{ \lambda_1 u_1 + \cdots \lambda_s u_s \mid \lambda_i \geq 0 \}. 
 \]
\item $\sigma$ is \emph{strongly convex} if $\sigma \cap (-\sigma) =\{0\}$.
\item The \emph{dimension} of $\sigma$ is the dimension of the smallest subspace of $N_\mathbb{R}$ containing $\sigma$. 
\item A \emph{face} of $\sigma$ is the intersection $\{ l=0 \} \cap \sigma$, where $ l \in N^*_{\mathbb{R}}$ is a linear form which is non-negative on $\sigma$. 
\end{itemize}

Let now $M = \on{Hom}_{\mathbb{Z}}(N, \mathbb{Z})$, and denote by $\langle, \rangle : M \times N \rightarrow \mathbb{Z}$ the canonical pairing. We have $M_{\mathbb{R}} \simeq N^*_{\mathbb{R}}$.  If $\sigma \subset N_{\mathbb{R}}$ is a strongly convex rational polyhedral cone, the \emph{dual cone} $\sigma^{\vee} \subset M_{\mathbb{R}}$ is 
\[
\sigma^{\vee} :=\{ m \in M_{\mathbb{R}} \mid \langle m, u \rangle \geq 0 ~\forall u \in \sigma \}.
\]
By Gordan's lemma, $S_{\sigma} :=\sigma^{\vee} \cap M$ is a finitely generated monoid, from which we construct the affine monoid scheme $X_{\sigma} := \spec S_{\sigma}$. The realization $X_{\sigma, k} = \Spec k[S_{\sigma}]$ to a field $k$ has the structure of an affine toric variety. The torus $T$ corresponds to $M$ (or $N$), and can be written $T=\spec M$, with $T_k =\Spec k[M]$. In algebraic terms, the torus action
\[
\Spec k[M] \times_k \Spec k[S_{\sigma}] \rightarrow \Spec k[S_{\sigma}]
\]
corresponds to the co-multiplication
\[
S_{\sigma} \to S_\sigma \otimes M, \quad t^m \rightarrow t^m\otimes t^m
\]
where we have written elements of $S_{\sigma} \subset M$ multiplicatively as $t^m, m \in M$, with multiplication given by $t^m \cdot t^{m'} = t^{m+m'}$. 
%which extends to the torus action $T_k \otimes_k (X_\sigma)_k \to (X_\sigma)_k$ after the base change to a field $k$. 

We may create more general monoid schemes by gluing the $X_{\sigma}$ above from cones assembled into a \emph{fan}.   A \emph{fan} consists of a finite collection $\Delta$ of cones in $N \otimes_{\mathbb{Z}} \mathbb{R}$ satisfying the following properties:
\begin{itemize}
\item Each $\sigma \in \Delta$ is a strongly convex rational polyhedral cone.
\item If $\sigma \in \Delta$, and $\tau$ is a face of $\sigma$, then $\tau \in \Delta$.
\item If $\sigma, \sigma' \in \Delta$, then $\sigma \cap \sigma'$ is a face of each of $\sigma, \sigma'$. 
\end{itemize}

If $\sigma \in \Delta$, and $\tau$ is a face of $\sigma$, then $S_{\sigma} \subset S_{\tau}$ is a submonoid, inducing an open embedding $X_{\tau} \subset X_{\sigma}$. We may now construct a monoid scheme $X_{\Delta}$ by gluing $X_{\sigma_1}$ and $X_{\sigma_2}$ along $X_{\sigma_1 \cap \sigma_2}$ $\forall \sigma_1, \sigma_2 \in \Delta$. The toric variety $X_{\Delta, k}$ over $\Spec k$ is obtained by gluing the affine toric varieties $X_{\sigma, k}, X_{\tau, k}$ along $X_{\sigma \cap \tau, k}$ $\forall \sigma, \tau \in \Delta$. 

\begin{myeg}
The projective line $\mathbb{P}^1_k$, as a toric variety, arises from the following fan:
\[
\begin{tikzpicture}[x=.4cm,y=.4cm]

\draw[->, line width=0.8mm] (0,0) -- (8,0) node[above] {};
\draw[<-,line width=0.8mm] (-8,0) -- (0,0) node[above] {};

\draw[thick, fill=black] (0,0) circle(0.25);

grid[xstep=.5cm, ystep=.5cm] (4,4);
\draw (0,-1.5) node[below] {The fan $\Delta$ for $\mathbb{P}^1_k$};

\draw (4,2) node[below] {$\sigma_0$};

\draw (-4,2) node[below] {$\sigma_1$};
\end{tikzpicture}
\]
From cones in $\Delta$, one obtains the following affine monoid schemes (c.f. Example \ref{example: P1}):
\begin{enumerate}
	\item 
$X_{\sigma_0}=\spec S_{\sigma_0}=\spec (\{x^n \mid n \in \mathbb{N}\}) = \mathbb{A}^1=\{\{0\}, \langle x \rangle\}$, 
\item 
$X_{\sigma_1}=\spec S_{\sigma_1}=\spec (\{x^{-n} \mid n \in \mathbb{N}\})= \mathbb{A}^1=\{\{0\}, \langle x^{-1} \rangle\}$, 
\item 
$X_{\sigma_0 \cap \sigma_1}=\spec S_{\sigma_0 \cap \sigma_1}=\spec (\{x^n \mid n \in \mathbb{Z}\})= \mathbb{T}=\{\{0\}\}$.
\end{enumerate}
Taking $k$-realizations, one obtains the affine line $\mathbb{A}^1_k=X_{\sigma_0, k} = X_{\sigma_1, k} $ and the torus $T_k=X_{\sigma_0 \cap \sigma_1, k}$. By gluing $X_{\sigma_0}$ and $X_{\sigma_1}$ along $X_{\sigma_0 \cap \sigma_1}$, we obtain the monoid projective line $\mathbb{P}^1$, whose realization is $\mathbb{P}^1_k$. 
\end{myeg}

\begin{myeg} \label{P2}
The projective plane $\mathbb{P}^2_k$, as a toric variety, arises from the following fan:
	\[
\begin{tikzpicture}[x=.6cm,y=.6cm]
	\draw[style=dashed, fill=red!20] (0,0) -- (0,4) -- (4,4) -- (4,0)-- cycle;

	\draw (2,2) node[below] {$\sigma_0$};	
	
	\draw[style=dashed, fill=green!20] (0,0) -- (0,4) -- (-4,4) -- (-4,-4) -- cycle;
	
	\draw (-2,1) node[below] {$\sigma_1$};	

	\draw[, style=dashed, fill=gray!40] (0,0) -- (-4,-4) -- (4,-4) -- (4,0) 
	-- cycle;
	
	\draw (1,-1.5) node[below] {$\sigma_2$};
	
\draw[line width=0.8mm] (0,0) -- (0,4.2) node[above] {};
\draw[line width=0.8mm] (0,0) -- (4.2,0) node[above] {};
\draw[line width=0.8mm] (0,0) -- (-4.2,-4.2) node[above] {};

grid[xstep=.5cm, ystep=.5cm] (4,4);
	\draw (0,-4.75) node[below] {The fan $\Delta$ for $\mathbb{P}^2$};
	\end{tikzpicture}\]
From $\sigma_i$, one obtains the following three affine monoid schemes:
\begin{enumerate}
	\item 
	$X_{\sigma_0}=\spec (\langle x_2, x_1 \rangle) = \mathbb{A}^2$ (the monoid affine plane), 
	\item 
	$X_{\sigma_1}=\spec (\langle x_1^{-1}, x_1^{-1}x_2 \rangle) = \mathbb{A}^2$, 
	\item 
$X_{\sigma_2}=\spec (\langle x_1x_2^{-1}, x_2^{-1}\rangle) = \mathbb{A}^2$. 
\end{enumerate}	

For each $i\neq j$, $\tau_{ij}=\sigma_i \cap \sigma_j$ provides the gluing data, and one obtains the monoid projective plane $X_\Delta = \mathbb{P}^2$. In particular, $X_{\Delta, k} =\mathbb{P}^2_k$.
\end{myeg}

A cone $\sigma \subset N_{\mathbb{R}}$ is \emph{smooth} if it is generated by a subset of a basis of $N$. The toric variety $X_{\Delta, k}$ is smooth $\iff$ every $\sigma \in \Delta$ is smooth. If $\sigma \in \Delta$ is smooth of maximal dimension $n=dim(N_{\mathbb{R}}$, generated by $u_1, \cdots, u_n$, then $\sigma^{\vee}$ is also smooth, generated by $u^{\vee}_1, \cdots, u^{\vee}_n$ where $\{ u^{\vee}_i$ is the dual basis. This implies that $\mspec(S_{\sigma}) \simeq \mathbb{A}^n$. Thus, if $\Delta$ consists of smooth cones of maximal dimension (such as for instance the fan of $\mathbb{P}^2$ above), $X_{\Delta}$ has a cover by affines isomorphic to $\mathbb{A}^n$. Finally, $X_{\Delta, k}$ is projective $\iff$ $\Delta$ is the \emph{normal fan} of a $rk(N)$-dimensional lattice polytope in $M_{\mathbb{R}}$ (see \cite{coxtoric} for an explanation of normal fan). 

In \cite{CHWW}, Corti\~n{a}s, Haesemeyer, Walker, and Weibel characterize which monoid schemes arise from fans. To be precise, they show that for a fan $\Delta$, the monoid scheme $X_\Delta$ is a separated, connected, torsion-free, normal monoid scheme of finite type. Also, conversely, if $X$ is such a monoid scheme (toric monoid scheme), they show that one can construct a fan $\Delta$ so that $X=X_\Delta$. For details, see \cite[Section 4]{CHWW}.

\subsection{Decomposing modules and coherent sheaves}

In this section we show that coherent sheaves on a monoid scheme admit finite canonical decompositions into indecomposables, showing that a version of the Krull-Schmidt theorem holds in this context.  

\begin{mydef}
Let $A$ be a monoid. An $A$-module $M$ is \emph{indecomposable} if it cannot be written $M = M' \oplus M''$ for non-zero $A$-modules $M', M''$. Similarly, a quasicoherent sheaf $\F$ on a monoid scheme $X$ is \emph{indecomposable} if it cannot be written $\F = \F' \oplus \F''$ for non-zero quasicoherent sheaves $\F', \F''$. 
\end{mydef}

Thus, a quasicoherent sheaf $\F = \wt{M}$ on $X = \spec A$ is indecomposable if and only if $M$ is indecomposable as an $A$-module. 
It follows from property $(4)$ of the category $\Amod$ (see the paragraph after Proposition \ref{pe_subcat}) that if $\F,\F'$ are quasicoherent $\mc{O}_X$--modules, and $\mc{G} \subset \F\oplus \F'$ is a quasicoherent subsheaf, then \mbox{$\mc{G} = (\mc{G}\cap \F) \oplus (\mc{G} \cap \F' )$}, where for an open subset $U \subset X$,  
%\marginpar{nolabel}
\begin{equation*} (\mc{G} \cap \F) (U) := \mc{G}(U) \cap \F (U). \end{equation*} We thus obtain:

\begin{prop} \label{subobjects} 
Suppose that $\F, \F'$ are non-zero quasicoherent sheaves on a monoid scheme $(X, \mc{O}_X)$, and that $\mc{G}$ is an indecomposable quasicoherent subsheaf of $\F \oplus \F'$
Then $\mc{G} \subset \F$ or $\mc{G} \subset \F'$. 
\end{prop}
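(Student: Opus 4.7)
The proposition is an almost immediate consequence of the decomposition property stated in the paragraph immediately before it, so my plan is essentially to formalize that reduction.

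First, I would invoke the decomposition recorded just above the proposition: for any quasicoherent subsheaf $\mc{G} \subset \F \oplus \F'$, one has
\[
\mc{G} = (\mc{G} \cap \F) \oplus (\mc{G} \cap \F'),
\]
where the intersections are defined by $(\mc{G} \cap \F)(U) := \mc{G}(U) \cap \F(U)$ on each open $U \subset X$. This assertion rests on property $(4)$ of the category $\Amod$ recalled after Proposition \ref{pe_subcat}, applied at the level of sections (or stalks) over any affine open $U = \spec A \subset X$. I would briefly note that $(\mc{G}\cap\F)$ and $(\mc{G}\cap\F')$ are themselves quasicoherent: this can be checked locally on an affine cover, where the claim reduces to the fact that the submodule decomposition of $L \subset M \oplus N$ given by $L = (L \cap M) \oplus (L \cap N)$ is preserved under localization by multiplicative subsets of $A$, which follows from exactness of localization recorded in Proposition \ref{module_localization_prop}(1).

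Second, with both summands being genuine quasicoherent subsheaves of $\mc{G}$, the hypothesis that $\mc{G}$ is indecomposable forces one of the two summands to be the zero sheaf, while the other equals $\mc{G}$. Thus either $\mc{G} \cap \F' = 0$, in which case $\mc{G} = \mc{G} \cap \F \subset \F$, or symmetrically $\mc{G} \cap \F = 0$ and $\mc{G} \subset \F'$, yielding the desired conclusion.

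There is no real obstacle here beyond bookkeeping: the only point that requires any verification is the quasicoherence of the intersection sheaves, and that is a purely local check that was already implicitly used in stating the decomposition. The proof is therefore a one-line deduction from Proposition \ref{subobjects}'s preceding decomposition combined with the definition of indecomposability.
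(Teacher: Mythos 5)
Your proposal is correct and follows exactly the route the paper intends: the proposition is stated as an immediate consequence of the decomposition $\mc{G} = (\mc{G}\cap \F) \oplus (\mc{G} \cap \F')$ recorded in the preceding paragraph, combined with indecomposability of $\mc{G}$. Your extra remark verifying quasicoherence of the intersection sheaves is a harmless (and reasonable) addition, but otherwise this is the same argument.
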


\begin{prop}\label{proposition: decomposition}
\begin{enumerate}
\item Let $A$ be a monoid, and $M \in \Amod$ a finitely generated $A$-module. Then $M$ can be written as a finite direct sum
\[
M \simeq M_1 \oplus M_2  \oplus \cdots \oplus M_k
\]
where $M_i, i=1 \cdots k$ is indecomposable. Moreover, if $M \simeq M'_1 \oplus M'_2 \oplus \cdots \oplus M'_{k'}$ is another such decomposition into indecomposable modules, then $k=k'$, and there exists a permutation $\sigma \in S_k$ such that $M_i \simeq M'_{\sigma(i)}$ for $i=1, \dots, k$. 

\item Let $(X, \mc{O}_X)$ be a quasi-compact monoid scheme. Then every $\F \in \Coh(X)$ can be written as a finite direct sum
\begin{equation} \label{Fdecomp}
\F \simeq \F_1 \oplus \F_2 \oplus \cdots \oplus \F_m
\end{equation}
where each $\F_i \in \Coh(X)$ is indecomposable. Moreover, if $ \F \simeq \F'_1 \oplus \F'_2 \oplus \cdots \oplus \F'_{m'}$ is another such decomposition into indecomposable coherent sheaves, then $m=m'$, and there exists a permutation $\sigma \in S_m$ such that $\F_i \simeq \F'_{\sigma(i)}$.

\end{enumerate}
\end{prop}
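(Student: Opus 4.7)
Both existence and uniqueness will be extracted from Proposition \ref{subobjects}---the fact that an indecomposable quasicoherent subsheaf of $\F\oplus\F'$ must lie in one of the two summands---combined with an inductive length bound: for part~(1) from the minimal number of generators, and for part~(2) from applying part~(1) to a finite affine cover. I would begin by observing that a cyclic module $Am$ is automatically indecomposable, since if $Am=M'\oplus M''$ non-trivially then the generator $m$ lies in exactly one of $M'$, $M''$ (the underlying pointed set of $\oplus$ being a wedge), whence $Am$ is contained in that factor and the other must vanish. Given $M\in\Amod$ generated by $n$ elements, I then induct on $n$: if $M=M'\oplus M''$ non-trivially, each generator $m_i$ spans a cyclic, hence indecomposable, submodule, so by Proposition \ref{subobjects} it lies entirely in $M'$ or in $M''$; the resulting partition yields strictly smaller generating sets for the two factors, and induction applies.

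\textbf{Part (1), uniqueness (Krull--Schmidt).} Given two decompositions $M=\bigoplus_{i=1}^k M_i=\bigoplus_{j=1}^{k'} M'_j$ into indecomposables, Proposition \ref{subobjects} applied to each $M_i$ viewed as an indecomposable submodule of $\bigoplus_j M'_j$ produces an index $\sigma(i)$ with $M_i\subseteq M'_{\sigma(i)}$. Swapping the roles yields $\tau(j)$ with $M'_j\subseteq M_{\tau(j)}$, and chaining gives $M_i\subseteq M'_{\sigma(i)}\subseteq M_{\tau\sigma(i)}$. Because distinct summands of a wedge-style direct sum intersect only at the basepoint, this forces $\tau\sigma(i)=i$, and by symmetry $\sigma\tau(j)=j$; hence $\sigma,\tau$ are mutually inverse bijections, each chain collapses to an equality $M_i=M'_{\sigma(i)}$, and the required isomorphism follows.

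\textbf{Part (2).} Uniqueness transfers unchanged: Proposition \ref{subobjects} is stated for any monoid scheme, so the argument above goes through with quasicoherent subsheaves in place of submodules. For existence, quasi-compactness lets me fix a finite affine cover $X=\bigcup_{j=1}^N U_j$ with $U_j=\spec A_j$. By part~(1), the finitely generated $A_j$-module $\F(U_j)$ decomposes into some number $\ell_j$ of indecomposables. If $\F=\bigoplus_{i=1}^m\F_i$ is any decomposition into nonzero quasicoherent sheaves, restricting to $U_j$ and further decomposing each $\F_i|_{U_j}$ into $A_j$-indecomposables must, by the uniqueness in part~(1), produce exactly $\ell_j$ pieces in total; in particular at most $\ell_j$ of the $\F_i|_{U_j}$ can be nonzero. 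Since each $\F_i$ has nonempty support and hence is nonzero on some $U_j$, we obtain $m\leq\sum_j\ell_j$. This uniform bound forces any process of nontrivial splitting of $\F$ to terminate, producing the required decomposition \eqref{Fdecomp}.

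\textbf{Main obstacle.} I expect the most delicate step to be the collapse $M_i=M'_{\sigma(i)}$ in part~(1): one needs Proposition \ref{subobjects} to produce genuine submodule inclusions (not just abstract isomorphisms onto summands), and one needs the wedge structure of $\oplus$ in $\Amod$ to guarantee that a submodule contained in two distinct summands of a direct sum is forced to be zero. Once this is nailed down, the global version in part~(2) is essentially formal, since the argument avoids patching local decompositions and instead uses Proposition \ref{subobjects} directly at the global level, with the finite affine cover supplying only a cardinality bound rather than the decomposition itself.
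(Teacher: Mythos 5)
Your proposal is correct. For part (2) it follows the paper's proof almost verbatim: the finite affine cover supplies a uniform bound on the number of nonzero summands in any decomposition (the paper phrases this as a surjection from the set of local indecomposable pieces onto the set of global summands, you phrase it as $m\leq\sum_j\ell_j$ via the local uniqueness — same content), and uniqueness is extracted from Proposition \ref{subobjects} by an exchange argument. Your uniqueness argument (chaining $M_i\subseteq M'_{\sigma(i)}\subseteq M_{\tau\sigma(i)}$ and using that distinct wedge summands meet only at the basepoint) is in fact a slightly cleaner packaging of the paper's $\F_1\hookrightarrow\F\twoheadrightarrow\F'_r\hookrightarrow\F\twoheadrightarrow\F_1$ composition, and both hinge on the same two facts you flag as the main obstacle: Proposition \ref{subobjects} yields genuine containments, and property (4) of $\Amod$ forces a nonzero submodule to lie in a unique summand.

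The one place you genuinely diverge is part (1). The paper does not induct: it observes that the indecomposable decomposition of a module $M$ is \emph{canonical}, namely the partition of $M\setminus 0$ into equivalence classes of the relation generated by $m\sim am$ (with $0$ adjoined to each class), with finiteness coming from the fact that each class must contain a generator. Existence and uniqueness then both fall out at once from canonicity, with no exchange argument needed at the module level. Your route — cyclic modules are indecomposable, generators distribute over the summands, induct on the number of generators, then run the Krull–Schmidt exchange separately — is correct and has the virtue of making parts (1) and (2) structurally uniform, but it is less explicit: the paper's description identifies the indecomposable factors concretely (as connected components of the action graph), which is what makes statements like Proposition \ref{subobjects} and the later classification results easy to use. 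Both arguments rely on the same finiteness input (the minimal number of generators bounds the number of factors), so nothing is lost either way.
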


\begin{proof}
For the first part, consider the equivalence relation on $M \backslash 0$ generated by $m \sim am$, for $a \in A$, $m \in M$. The decomposition into indecomposable factors is easily seen to coincide with equivalence classes for this relation, by adjoining $0$ to each. To see that a finite number of indecomposable factors occur, we note that each must contain at least one generator, and so the number of factors is bounded above by the minimum number of generators for $M$. 

For the second part, we first show that a finite decomposition into indecomposable sheaves exists. Let $\mathcal{U}=\{U_1, \cdots, U_r\}$ be a finite affine cover of $X$ (which exists since $X$ is quasi-compact). For each $i$ we have $\F \vert_{U_i} \simeq \wt{N}_i$ for some finitely generated $\mc{O}_X (U_i)$ - module $N_i$. By the first part of the proposition, we can decompose $N_i$ uniquely into finitely many indecomposable factors $N_i \simeq N_{i, 1} \oplus \cdots \oplus N_{i, k_i}$. Suppose now that $\F \simeq \F_1 \oplus \cdots \oplus \F_s$ is some decomposition of the coherent sheaf $\F$ into not necessarily indecomposable non-zero summands. Each $\wt{N}_{i,j}$ belongs to exactly one of the summands, yielding a map from the set $\{ \wt{N}_{i,j} \}$ to the set $\{ \F_k \}$. Moreover, since each $\F_k$ must be non-zero on at least one of the affines $U_1, \cdots, U_r$, we see this map is surjective. It follows that a finite decomposition of $\F$ into indecomposable summands exists. 

We now prove uniqueness. Suppose that 
\[
\F \simeq \F_1 \oplus \F_2 \oplus \cdots \oplus \F_m \simeq  \F'_1 \oplus \F'_2 \oplus \cdots \oplus \F'_{m'}
\]
where $\F_i, \F'_j$ are indecomposable. Denote by $\iota_i : \F_i \hookrightarrow \F$ (resp.  $\iota'_j: \F'_j \hookrightarrow \F$) the inclusions.  By Proposition \ref{subobjects}, $\F_1 \subset \F'_r$ for some $1\leq r \leq m'$. The composition
\[
\F_1 \hookrightarrow \F \twoheadrightarrow \F'_r \hookrightarrow \F  \twoheadrightarrow \F_1
\]
is an isomorphism, implying that $\F'_r  \hookrightarrow \F  \twoheadrightarrow \F_1$ is a surjection. By Proposition \ref{subobjects} again, $\F'_r \subset \F_s$ for a unique $1 \leq s \leq m$, and since $\F'_r \cap \F_1 \neq 0$, we conclude that $s=1$.  Thus $\F'_r  \hookrightarrow \F  \twoheadrightarrow \F_1$ is an injection, and therefore an isomorphism. The claim now follows by induction.

\end{proof}

The following simple proposition follows immediately from the fact that an isomorphism of pointed sets is a permutation:

\begin{prop} \label{splitting_isomorphisms}
\begin{enumerate}
\item Let $A$ be a monoid, and $M,M' \in \Amod$, such that $M = N \oplus K$, $M'=N' \oplus K'$. Suppose that $\phi: M \rightarrow M'$ is an isomorphism that restricts to an isomorphism $\phi_1 : N \rightarrow N'$. Then $\phi = \phi_1 \oplus \phi_2$, where $\phi_2 = \phi \vert_K : K \rightarrow K'$.
\item Let $(X, \mc{O}_X)$ be a monoid scheme, and $\F, \F' \in \Coh(X)$ such that $\F= \G \oplus \mc{H}$, $\F'=\G' \oplus \mc{H}'$, and $\psi: \F \rightarrow \F'$ an isomorphism restricting to an isomorphism $\psi_1: \G \rightarrow \G'$. Then $\psi = \psi_1 \oplus \psi_2 $, where $\psi_2 = \psi \vert_{\mc{H}}: \mc{H} \rightarrow \mc{H}'$.
\end{enumerate}
\end{prop}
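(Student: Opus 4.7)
The plan is to exploit the combinatorial nature of $\Amod$: an isomorphism of $A$-modules is, at the level of underlying pointed sets, a bijection sending basepoint to basepoint, i.e.\ a permutation of the non-zero elements compatible with the $A$-action.

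For part (1), I would first observe that the coproduct $M = N \oplus K$ in $\Amod$ is realized on underlying pointed sets as a wedge, so $M \setminus *_M$ decomposes as the disjoint union $(N \setminus *_N) \sqcup (K \setminus *_K)$, and similarly for $M' = N' \oplus K'$. Since $\phi$ is a bijection of non-zero elements and restricts to a bijection $\phi_1: N \setminus *_N \to N' \setminus *_{N'}$, the complementary non-zero elements must also correspond under $\phi$, so $\phi(K \setminus *_K) = K' \setminus *_{K'}$, i.e.\ $\phi$ restricts to a bijection $\phi_2: K \to K'$. Next I would check that $\phi_2$ is an $A$-module map: for $a \in A$ and $k \in K$, the element $a \cdot k$ lies in $K$ (since $K$ is a submodule), and $\phi_2(a \cdot k) = \phi(a \cdot k) = a \cdot \phi(k) = a \cdot \phi_2(k)$. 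To conclude that $\phi = \phi_1 \oplus \phi_2$ as maps out of $M = N \oplus K$, I just note that this identity holds on each wedge summand by construction, and a map out of a wedge is determined by its restrictions to the summands. Finally, $\phi_2$ is a bijective $A$-module morphism, hence an isomorphism in $\Amod$.

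For part (2), I would reduce to the module case by working locally. Choose an affine open cover $\{U_\alpha = \spec A_\alpha\}$ of $X$, and set $M_\alpha := \F(U_\alpha)$, $N_\alpha := \G(U_\alpha)$, $K_\alpha := \mc{H}(U_\alpha)$, with analogous primed notation for $\F', \G', \mc{H}'$. The decomposition $\F = \G \oplus \mc{H}$ restricts to $M_\alpha = N_\alpha \oplus K_\alpha$ in $\on{Mod}_{A_\alpha}$, and likewise for the primed side, while the isomorphism $\psi$ restricts to an isomorphism $\psi|_{U_\alpha}: M_\alpha \to M'_\alpha$ whose restriction to $N_\alpha$ is the isomorphism $\psi_1|_{U_\alpha}: N_\alpha \to N'_\alpha$. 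Part (1) then produces on each $U_\alpha$ the splitting $\psi|_{U_\alpha} = \psi_1|_{U_\alpha} \oplus \psi_2^{(\alpha)}$, where $\psi_2^{(\alpha)} := \psi|_{U_\alpha}|_{K_\alpha}$. By construction $\psi_2^{(\alpha)}$ is just the restriction to $\mc{H}(U_\alpha)$ of $\psi(U_\alpha)$, so these maps are automatically compatible with the restriction maps and glue to a well-defined morphism $\psi_2: \mc{H} \to \mc{H}'$, which is an isomorphism because it is so on an affine cover. The identity $\psi = \psi_1 \oplus \psi_2$ then holds by the uniqueness of morphisms into a direct sum, checked on the cover.

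I do not expect any real obstacle here; the only minor subtlety is ensuring that the local splittings from part (1) are compatible with restriction maps, but since each $\psi_2^{(\alpha)}$ is literally the restriction of $\psi(U_\alpha)$ to the $K_\alpha$-component, compatibility with restriction is inherited from that of $\psi$ itself. The whole argument ultimately rests on the observation stated before the proposition: in the $\fun$-setting an isomorphism is a bijection of underlying pointed sets, so a decomposition of the source and a partial match of summands force the complementary pieces to match as well.
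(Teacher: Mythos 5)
Your argument is correct and is exactly the paper's (the paper offers only the one-line remark that the proposition ``follows immediately from the fact that an isomorphism of pointed sets is a permutation,'' which is precisely the observation you flesh out: the bijection on non-zero elements forces the complements of the matched summands to correspond, and $A$-equivariance of the restriction is automatic). Your reduction of part (2) to part (1) via sections over an open cover is the intended globalization; note only that since $(\G\oplus\mc{H})(U)=\G(U)\oplus\mc{H}(U)$ for \emph{every} open $U$ and a sheaf isomorphism is an isomorphism on all sections, part (1) applies verbatim on each open set, so the cover need not even be affine.
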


%A short exact sequence isomorphic to one of the form
%\marginpar{cokernels}
%\[
%\emptyset \rightarrow \F \rightarrow \F \oplus \mc{G} \rightarrow \mc{G} \rightarrow \emptyset
%\]
%is called \emph{split}.  

The following elementary result will be used later to classify indecomposable sheaves:

\begin{prop} \label{splitting_sheaves}
Let $X$ be a monoid scheme, and $U, V \subset X$ open subsets such that $X = U \cup V$. Suppose that $\F$ is a coherent sheaf on $X$ such that
\begin{itemize}
\item $\F \vert_{U} \simeq \F' \oplus \F''$, where $\F', \F''$ are non-zero.
\item $\F' \vert_{U \cap V} = 0$.
\end{itemize}
Then $\F$ is decomposable. 
\end{prop}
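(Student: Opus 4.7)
The plan is to construct the decomposition by an explicit gluing, using the formalism recalled in Section \ref{gluing}. The geometric intuition is that $\F'$ has empty support in $V$, so we should be able to extend it by zero to all of $X$; what remains is a complementary summand that agrees with $\F''$ on $U$ and with $\F|_V$ on $V$.

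More precisely, set $W := U \cap V$. From the hypotheses,
\[
\F|_W = (\F|_U)|_W = \F'|_W \oplus \F''|_W = 0 \oplus \F''|_W,
\]
so the canonical inclusion induces an isomorphism $\varphi \colon \F''|_W \xrightarrow{\sim} \F|_W = (\F|_V)|_W$. I define two coherent sheaves on $X$ by gluing along the cover $\{U,V\}$. Let $\G$ be obtained from $\F'$ on $U$ and the zero sheaf on $V$, with the tautological gluing isomorphism $\F'|_W = 0 = 0|_W$; this is well-defined because $\F'|_W = 0$. Let $\mathcal{H}$ be obtained from $\F''$ on $U$ and $\F|_V$ on $V$, glued via $\varphi$. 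Both constructions satisfy the cocycle condition trivially, since the cover has only two elements.

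Next I would verify that $\G \oplus \mathcal{H} \simeq \F$. On $U$ we have $(\G \oplus \mathcal{H})|_U = \F' \oplus \F'' \simeq \F|_U$, and on $V$ we have $(\G \oplus \mathcal{H})|_V = 0 \oplus \F|_V = \F|_V$. By Proposition \ref{splitting_isomorphisms}(2) applied on $W$, the identity on $\F''|_W$ and the zero map on $\F'|_W = 0$ together recover $\varphi$ under the identification $\F|_W \simeq \F''|_W$, so these local isomorphisms agree on $W$ with the gluing data for $\F$. By the gluing recipe for morphisms in Section \ref{gluing}, they assemble into a global isomorphism $\G \oplus \mathcal{H} \simeq \F$.

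Finally, $\G \neq 0$ because $\G|_U = \F' \neq 0$, and $\mathcal{H} \neq 0$ because $\mathcal{H}|_U = \F'' \neq 0$; thus the decomposition is nontrivial and $\F$ is decomposable. I do not expect a genuine obstacle here: the only thing to be slightly careful about is that the local isomorphisms on $U$ and $V$ really do agree on the overlap $W$, and this is automatic once one notes that $\F'|_W = 0$ reduces the gluing isomorphism of $\F$ (between $\F|_U|_W$ and $\F|_V|_W$) precisely to $\varphi$ on the $\F''$ summand.
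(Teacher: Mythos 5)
Your proof is correct and follows essentially the same route as the paper: both view $\F$ as glued from $\F'\oplus\F''$ on $U$ and $\F|_V$ on $V$, observe that the vanishing of $\F'|_{U\cap V}$ reduces the gluing isomorphism to one between $\F''|_{U\cap V}$ and $\F|_{U\cap V}$, and then split off the extension-by-zero of $\F'$ as a nonzero direct summand. The only difference is cosmetic (you verify the agreement on overlaps slightly more explicitly via Proposition \ref{splitting_isomorphisms}).
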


\begin{proof}
Let $\G = \F \vert_{V}.$ By Section \ref{gluing} we may view $\F$ as glued from $\F' \oplus \F''$ to $\G$ via an isomorphism $\phi: \F' \oplus \F'' \vert_{U \cap V} \rightarrow \G \vert_{U \cap V} $. Since $\F' \vert_{U \cap V} =0$, $\phi$ is an isomorphism $\F'' \vert_{U \cap V} \rightarrow \G \vert_{U \cap V}$. Thus, $\F = \mc{H} \oplus \mc{H}'$, where $\mc{H} = (\F'',U) \simeq_{\phi} (\G, V)$, and $\mc{H}' = (\F',U)\simeq_0 (0, V)$. 
\end{proof}

\section{ Type-$\alpha$ sheaves and Hall Algebras} \label{section: Type sheaves and Hall Algebras} 

Having reviewed the basics of monoid schemes and coherent sheaves on them, we now turn to Hall algebras. If $X$ is an ordinary projective variety over a finite field, the category $\Coh(X)$ is finitary (see \cite{S}), and so one may define and study its Hall algebra. In the world of monoid schemes, the situation is more complicated. If $X$ is a monoid scheme, the category $\Coh(X)$ is no longer finitary in general, as seen in \cite[Example 4]{lorscheid2018quasicoherent} even when $X=\mathbb{P}^1$.  There is however a class of sheaves we call \emph{type}-$\alpha$ sheaves, which are well-behaved in this regard, and can be used to define a Hall algebra. 

\subsection{Type-$\alpha$ modules} \label{subsection: type=alpha modules}

Recall from part $(2)$ of Example \ref{example: proto-exact} that $\vfun$ is the proto-abelian sub-category of $\Set^{fin}$ with objects finite pointed sets and morphisms satisfying the condition \eqref{spec_condition}. 

\begin{mydef}(\cite[Definition 2.2.1]{Sz0}) \label{typeamod}
Let $A$ be a monoid. $M \in \Amod$ is said to be of \typea if it is isomorphic to a module in $\on{Fun}(A,\vfun)$, where $A$ is viewed as a one-object category.
\end{mydef}

In other words, $M$ is of \typea if for $a \in A$, $x,y \in M$, 
	\begin{equation} \label{cond_alpha}
	ax=ay \iff x=y \textrm{ OR } ax=ay=0.
	\end{equation}
Equivalently, the endomorphism of $M$ sending $x$ to $ax$ satisfies the condition \eqref{spec_condition} for all $a \in A, x \in M$. We denote by $\Amoda$ the (non-full) proto-abelian subcategory of $\Amod$ consisting of \typea modules with morphisms satisfying the condition \eqref{spec_condition}. The proto-abelian property of $\Amoda$ implies in particular that all morphisms have kernels and cokernels. We note that if $A$ is partially cancellative, then $\Amoda$ contains all ideals $I$, as well as all quotients of ideals $I/J$ for $J \subset I \subset A$. 

\begin{prop} \label{alpha_localization}
Let $A$ be a monoid, and $S \subset A$ a multiplicative subset.
\begin{enumerate}
\item If $M \in \Amoda$, then $S^{-1}M$ is a \typea \; $S^{-1}A$-module. 
\item Localization with respect to $S$ is an exact functor of proto-abelian categories from $\Amoda$ to $\sAmod^{\alpha}$. I.e. if $ J \hookrightarrow M \twoheadrightarrow N $ is a short exact sequence in $\Amoda$, then $S^{-1}J \hookrightarrow S^{-1}M \twoheadrightarrow S^{-1}N $ is a short exact sequence in $\sAmod^{\alpha}$. 
 \end{enumerate}
\end{prop}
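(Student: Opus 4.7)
The plan is to reduce both parts to pointwise verifications involving the defining condition \eqref{cond_alpha} of type-$\alpha$ modules and the condition \eqref{spec_condition} on morphisms, using the exactness of localization for $\Amod$ already established in Proposition \ref{module_localization_prop}(1) as a black box.

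For part (1), the approach is direct: take $\tfrac{a}{s} \in S^{-1}A$ and $\tfrac{x}{t}, \tfrac{y}{u} \in S^{-1}M$ with $\tfrac{a}{s}\cdot\tfrac{x}{t}=\tfrac{a}{s}\cdot\tfrac{y}{u}$. Unwinding the definition of equality in $S^{-1}M$ produces $s'' \in S$ with
\[
a(s''sux)=a(s''sty)\quad\text{in }M.
\]
Now invoke the type-$\alpha$ property of $M$: either $s''sux=s''sty$, which by inspection is exactly a witness to $\tfrac{x}{t}=\tfrac{y}{u}$ (take $s''s$ as witness), or $a(s''sux)=0$, in which case an application of the equivalence relation in $S^{-1}M$ shows $\tfrac{ax}{st}=\tfrac{a(s''sux)}{s t s''s u}=0$, i.e.\ $\tfrac{a}{s}\cdot\tfrac{x}{t}=0$.

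For part (2), first I check that localization is a functor into $\sAmod^{\alpha}$ at the level of morphisms: given $f \colon M\to N$ in $\Amoda$ and $\tfrac{x}{t},\tfrac{y}{u}\in S^{-1}M$ with $S^{-1}f(\tfrac{x}{t})=S^{-1}f(\tfrac{y}{u})\neq 0$, the same unwinding produces $s''$ with $f(s''ux)=f(s''ty)$ in $N$. If $f(s''ux)=0$ then $\tfrac{f(x)}{t}=\tfrac{f(s''ux)}{ts''u}=0$, contradicting non-vanishing; hence $f(s''ux)\neq 0$, and condition \eqref{spec_condition} for $f$ gives $s''ux=s''ty$, i.e.\ $\tfrac{x}{t}=\tfrac{y}{u}$, which is \eqref{spec_condition} for $S^{-1}f$. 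Given this, the exactness statement follows by combining Proposition \ref{module_localization_prop}(1)--(2) (which guarantees that a short exact sequence $J\hookrightarrow M\twoheadrightarrow N$ in $\Amod$ localizes to $S^{-1}J\hookrightarrow S^{-1}M\twoheadrightarrow S^{-1}N$ in $\sAmod$) with part (1) of the present proposition (to see that the resulting objects lie in $\sAmod^{\alpha}$); the inclusion $S^{-1}J\hookrightarrow S^{-1}M$ is automatically admissible as an injection, and the quotient map $S^{-1}M\twoheadrightarrow S^{-1}N\cong S^{-1}M/S^{-1}J$ automatically satisfies \eqref{spec_condition}, because any quotient map in $\Amod$ does.

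The only real obstacle is book-keeping: each equality in a localization involves an existentially quantified ``clearing'' element $s''\in S$, and one must carry these through the type-$\alpha$ trichotomy without accidentally gaining or losing a factor. No new structural input beyond \eqref{cond_alpha}, \eqref{spec_condition}, and Proposition \ref{module_localization_prop} is needed.
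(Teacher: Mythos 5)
Your proof is correct and follows essentially the same route as the paper's: part (1) is the same computation of unwinding equality in $S^{-1}M$ to a single relation in $M$ and then applying the type-$\alpha$ cancellation \eqref{cond_alpha}, and part (2) reduces to Proposition \ref{module_localization_prop} exactly as the paper does. Your additional check that morphisms satisfying \eqref{spec_condition} localize to morphisms satisfying \eqref{spec_condition} is a detail the paper leaves implicit, and it is carried out correctly.
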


\begin{proof}
Suppose $M \in \Amoda$, and $ \frac{a}{s} \in S^{-1}A, \frac{m}{t}, \frac{m'}{t'} \in S^{-1}M$, such that
\[
\frac{a}{s} \cdot   \frac{m}{t} =  \frac{a}{s}\cdot  \frac{m'}{t'} 
\] 
It follows that there is $s' \in S$ such that $s' s t' a m = s' s ta m' \in M$, or $(s's a ) t' m = (s's a) t m'$. By \eqref{cond_alpha}, either $t'm = tm'$, implying $ \frac{m}{t} =  \frac{m'}{t'} \in S^{-1}M$, or both sides are $0$. In the latter case we have $(s'st') a m =0$, and since $s'st' \in S$, $\frac{a}{s} \cdot \frac{m}{t} =0$, and by the same reasoning $\frac{a}{s} \cdot \frac{m'}{t'} =0$. The second part follows from Proposition \ref{module_localization_prop}. 
\end{proof}

\begin{myeg} \label{alphaA1}
Let $A = \langle t \rangle$. Given $M \in \Amod$, we may construct a directed graph $\Gamma_M$ which completely describes the isomorphism class of $M$ as follows: the vertices of $\Gamma_M$ are the non-zero elements of $M$, with directed edges from $m$ to $t \cdot m$ for every non-zero $m$. With this, one may observe that every vertex of $\Gamma_M$ has at most one outgoing edge. Recall that for a directed graph, a vertex is said to be a \emph{leaf} (resp.~\emph{root}) if it has no incoming (resp.~outgoing) edges. Then, one can easily see that elements of $M$ corresponding to leaves of $\Gamma_M$ form a minimal set of generators for $M$ as an $A$--module. If $M, N$ are $A$--modules, then one has $\Gamma_{M\oplus N} = \Gamma_M \sqcup \Gamma_N$ - i.e. direct sums of $A$--modules (or equivalently coherent sheaves on $\mathbb{A}^1$) correspond to disjoint unions of associated directed graphs.

It is well-known (see, for instance, \cite{Sz2}) that the connected components (i.e. those corresponding to indecomposable modules) of $\Gamma_M$ can be of three distinct types:

\begin{enumerate}
\item a rooted tree - i.e. the underlying undirected graph of $\Gamma$ is a tree possessing a unique root, such that there is a unique directed path from every vertex to the root (see {\bf Figure 1}). 
\item a graph obtained by joining a rooted tree to the initial vertex of $\Gamma_{\langle t \rangle}$ (see {\bf Figure 2}). 
\item a graph obtained by gluing rooted trees (see {\bf Figure 3}) to an oriented cycle. 
\end{enumerate}

%\begin{center}
\begin{minipage}{.5\textwidth}
\begin{center}
\begin{tikzpicture}
\draw [ultra thick,->] (0,0) -- (0.9,0.9);
\draw [fill] (0,0) circle [radius=0.1];
\draw [fill] (1,1) circle [radius=0.1];
\draw [ultra thick,->] (2,0) -- (1.1,0.9);
\draw [fill] (2,0) circle [radius=0.1];
\draw [ultra thick,->] (1,1) -- (1.9,1.9);
\draw [fill] (2,2) circle [radius=0.1];
\draw [fill] (4,0) circle [radius=0.1];
\draw [fill] (3,1) circle [radius=0.1];
\draw [ultra thick,->] (4,0) -- (3.1, 0.9);
\draw [ultra thick,->] (3,1) -- (2.1,1.9);
\draw [fill] (2,1) circle [radius=0.1];
\draw [ultra thick,->] (2,1) -- (2,1.9);
\draw [fill] (2,3) circle [radius=0.1];
\draw [ultra thick,->] (2,2) -- (2,2.9);
\draw [fill] (3,2) circle [radius=0.1];
\draw [ultra thick,->] (3,2) -- (2.1,2.9);
\draw [ultra thick,->] (2,3) -- (2,3.9);
\draw [fill] (2,4) circle [radius=0.1];
\node at (2,-1) {Figure 1};
\end{tikzpicture}
\end{center}
\end{minipage}
\begin{minipage}{.5\textwidth}
\begin{center}
\begin{tikzpicture}
\draw [ultra thick,->] (0,0) -- (0.9,0.9);
\draw [fill] (0,0) circle [radius=0.1];
\draw [fill] (1,1) circle [radius=0.1];
\draw [ultra thick,->] (2,0) -- (1.1,0.9);
\draw [fill] (2,0) circle [radius=0.1];
\draw [ultra thick,->] (1,1) -- (1.9,1.9);
\draw [fill] (2,2) circle [radius=0.1];
\draw [fill] (4,0) circle [radius=0.1];
\draw [fill] (3,1) circle [radius=0.1];
\draw [ultra thick,->] (4,0) -- (3.1, 0.9);
\draw [ultra thick,->] (3,1) -- (2.1,1.9);
\draw [ultra thick,->] (2,2) -- (2,2.9);
\draw [fill] (2,3) circle [radius=0.1];
\draw [ultra thick,dotted,->] (2,3) -- (2,4);
\node at (2,-1) {Figure 2};
\end{tikzpicture}
\end{center}
\end{minipage}
%\end{center}

\begin{center}
\begin{tikzpicture}
\draw [ultra thick,->] (0,0) -- (0.9,0);
\draw [ultra thick,->] (1,0) -- (1,0.9);
\draw [ultra thick,->] (1,1) -- (0.1,1);
\draw [ultra thick,->] (0,1) -- (0,0.1);
\draw [ultra thick,->] (0,-1) -- (0,-0.1);
\draw [ultra thick,->] (-1,-1) -- (-0.1,-0.1);
\draw [ultra thick,->] (2,2) -- (1.1,1.1);
\draw [ultra thick,->] (2,3) -- (2,2.1);
\draw [ultra thick,->] (3,2) -- (2.1,2);
\draw [fill] (0,0) circle [radius=0.1];
\draw [fill] (1,1) circle [radius=0.1];
\draw [fill] (1,0) circle [radius=0.1];
\draw [fill] (0,1) circle [radius=0.1];
\draw [fill] (-1,-1) circle [radius=0.1];
\draw [fill] (0,-1) circle [radius=0.1];
\draw [fill] (2,2) circle [radius=0.1];
\draw [fill] (2,3) circle [radius=0.1];
\draw [fill] (3,2) circle [radius=0.1];
\node at (1,-2) {Figure 3};
\end{tikzpicture}
\end{center}

The modules of \typea are then easily seen to be those where every vertex has either $0$ or $1$ incoming edge, either ladders (finite or infinite)  (see {\bf Figure 4}) or pure cycles (i.e. those without trees attached - see {\bf Figure 5}). 

\begin{minipage}{.5\textwidth}
\begin{center}
\begin{tikzpicture}
\draw [ultra thick,->] (2,0) -- (2,0.9);
\draw [ultra thick,->] (2,1) -- (2,1.9);
\draw [fill] (2,0) circle [radius=0.1];
\draw [fill] (2,1) circle [radius=0.1];
\draw [ultra thick,->] (2,2) -- (2,2.9);
\draw [fill] (2,3) circle [radius=0.1];
\draw [ultra thick,dotted,->] (2,3) -- (2,4);
\node at (2,-1) {Figure 4};
\end{tikzpicture}

\end{center}
\end{minipage}
\begin{minipage}{.5\textwidth}
\begin{center}
\begin{tikzpicture}
\draw [ultra thick,->] (0,0) -- (0.9,0);
\draw [ultra thick,->] (1,0) -- (1,0.9);
\draw [ultra thick,->] (1,1) -- (0.1,1);
\draw [ultra thick,->] (0,1) -- (0,0.1);
\draw [fill] (0,0) circle [radius=0.1];
\draw [fill] (1,1) circle [radius=0.1];
\draw [fill] (1,0) circle [radius=0.1];
\draw [fill] (0,1) circle [radius=0.1];
\node at (1,-2) {Figure 5};
\end{tikzpicture}
\end{center}
\end{minipage}
\end{myeg}

\subsection{Quasicoherent sheaves of \typea}

\begin{mydef}(Type-$\alpha$ condition for $\mathcal{O}_X$-modules)\label{conditionalpha}
Let $(X, \mc{O}_X)$ be a monoid scheme and $\mathcal{F} \in \Qcoh(X)$ . We say that $\mathcal{F}$ is \emph{type-$\alpha$} if the stalk $\F_x$ is a \emph{type-$\alpha$} $\mc{O}_{X,x}$-module for all $x \in X$. 
\end{mydef}

The following lemma shows that the type-$\alpha$ condition is affine-local. 

\begin{lem}\label{lemma: type alpha affine}
Let $(X, \mc{O}_X)$ be a monoid scheme and $\mathcal{F} \in \Qcoh(X)$ . The following conditions are equivalent:
\begin{enumerate}
	\item
$\mathcal{F}$ is type-$\alpha$.
	\item 
For every affine open neighborhood $U \subset X$, $\mathcal{F}(U)$ is a type-$\alpha$ $\mathcal{O}_X(U)$-module.	
	\item
Every $x \in X$ is contained in an open affine neighborhood $U \subset X$ such that $\F(U)$ is a type-$\alpha$ $\mc{O}_X (U)$-module. 
\end{enumerate}
\end{lem}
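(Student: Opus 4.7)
The proof is a closed triangle of implications $(2) \Rightarrow (3) \Rightarrow (1) \Rightarrow (2)$. The implication $(2) \Rightarrow (3)$ is immediate since we may cover $X$ by affine opens and apply $(2)$ to each. For $(3) \Rightarrow (1)$, fix $x \in X$ and choose by hypothesis an affine open $U = \spec A$ containing $x$ with $M := \F(U)$ a type-$\alpha$ $A$-module. Writing $\p \subset A$ for the prime corresponding to $x$, we have $\F_x \simeq M_{\p} = S_{\p}^{-1}M$, and Proposition~\ref{alpha_localization}(1) shows that the localization of a type-$\alpha$ module is again type-$\alpha$. Hence $\F_x$ is type-$\alpha$.

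The substantive step is $(1) \Rightarrow (2)$. Let $U = \spec A$ be an affine open and set $M := \F(U)$. We must verify condition \eqref{cond_alpha}: given $a \in A$ and $m,m' \in M$ with $am = am'$, we must show $m = m'$ or $am = am' = 0$. The key observation is that $A$ has a unique maximal ideal $\m$ (the complement of the units), so $\spec A$ has a unique closed point, and the stalk $M_{\m} = S^{-1}_{\m}M$ is obtained by inverting only units of $A$. The natural map $M \to M_{\m}$ is therefore injective as a morphism of pointed sets: if $um = 0$ for some unit $u$, then $m = u^{-1}um = 0$. Consequently $am \neq 0$ in $M$ implies $\tfrac{a}{1}\cdot\tfrac{m}{1} \neq 0$ in $M_{\m}$. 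Since $M_{\m} = \F_x$ is type-$\alpha$ by hypothesis $(1)$, the equation $\tfrac{a}{1}\cdot\tfrac{m}{1} = \tfrac{a}{1}\cdot\tfrac{m'}{1}$ forces $\tfrac{m}{1} = \tfrac{m'}{1}$ in $M_{\m}$, which means $um = um'$ for some unit $u \in A^{\times}$, and multiplying by $u^{-1}$ gives $m = m'$ in $M$.

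The main (very mild) obstacle is simply the verification that the map $M \hookrightarrow M_{\m}$ is injective on pointed sets and reflects nonvanishing; once this is in hand, the type-$\alpha$ axiom at the unique closed stalk transfers globally, because only units have been inverted. Everything else is formal: $(2)$ trivially implies $(3)$, and $(3)$ reduces to Proposition~\ref{alpha_localization}(1). Putting the three implications together yields the equivalence.
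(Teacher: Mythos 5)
Your proof is correct and follows essentially the same route as the paper's: $(2)\Rightarrow(3)$ is trivial, $(3)\Rightarrow(1)$ is Proposition~\ref{alpha_localization}, and $(1)\Rightarrow(2)$ exploits the fact that a monoid $A$ has a unique maximal ideal $\m = A \setminus A^{\times}$, so that the stalk at the unique closed point of $\spec A$ is $M_{\m}$, obtained by inverting only units. The paper simply asserts the resulting identification $M_{\m} = M$ outright, whereas you verify explicitly that the type-$\alpha$ condition transfers back along the injection $M \hookrightarrow M_{\m}$; this is the same argument with the details filled in.
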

\begin{proof}
$(1) \implies (2)$: Suppose $U= \spec A$ for some monoid $A$, and $\mathcal{F}=\wt{M}$ for $M \in \Amod$. As explained in Remark \ref{stalkvsaffine}, $A$ has a unique maximal ideal $\mathfrak{m}=A \backslash A^{\times}$ such that $M_\mathfrak{m}=M$.  Since stalks of $\F$ are assumed \typea, $M \in \Amoda$.  

$(2) \implies (3)$: This is clear

$(3) \implies (1)$: This follows from Proposition \ref{alpha_localization}. 
\end{proof}

\begin{mydef}
Let $(X, \mc{O}_X)$ be a monoid scheme. 
\begin{enumerate}
\item 
The category $\Qcoh^{\alpha}(X)$ is the (non-full) subcategory of $\Qcoh(X)$ with objects quasicoherent sheaves satisfying Definition \ref{conditionalpha}, and morphisms inducing \typea \; morphisms on stalks. The category $\Coh^{\alpha}(X)$ is the full subcategory of $\Qcoh^{\alpha}(X)$ consisting of coherent sheaves satisfying Definition \ref{conditionalpha}. 
\item 
If $\mc{I} \subset \mc{O}_X$ is a quasicoherent sheaf of ideals,  $\Qcoh^{\alpha}(X)_{\mc{I}}$ (resp. $\Coh^{\alpha}(X)_{\mc{I}}$) is the full subcategory of $\Qcoh^{\alpha}(X)$ (resp. $\Coh^{\alpha}(X)$) consisting of those $\F$ such that $\mc{I} \cdot \F = 0$. 
\end{enumerate}
\end{mydef}

We have the following analogue of Proposition \ref{coh_pe_thm}

\begin{prop} \label{coh_pa_prop}
Let $(X, \mc{O}_X)$ be a monoid scheme.
\begin{enumerate}
\item The categories $\Qcoh^{\alpha}(X)$ and $\Coh^{\alpha}(X)$ are proto-abelian.
\item If $Z \subset X$ is a closed subset, the categories $\Qcoh^{\alpha}(X)_Z, \Coh^{\alpha}(X)_Z$ of \typea sheaves supported in $Z$ are proto-abelian. 
\item If $\mc{I} \subset \mc{O}_X$ is a quasicoherent sheaf of ideals, the categories $\Qcoh^{\alpha}(X)_{\mc{I}}$ and $\Coh^{\alpha}(X)_{\mc{I}}$ are proto-abelian. 
\item If $X = \spec A$, then the functor $$ \wt{}: \Amoda \rightarrow \Qcoh^{\alpha}(X) $$ is an exact equivalence, restricting to an exact equivalence $$ \wt{}: (\Amoda)^{fg} \rightarrow \Coh^{\alpha}(X).$$
\end{enumerate}
\end{prop}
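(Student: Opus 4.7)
The plan is to reduce each statement to the affine setting, where we may invoke the identification $\Amoda \simeq \on{Fun}(A,\vfun)$ of Example \ref{example: proto-exact}(5) together with Proposition \ref{alpha_localization}, and then globalize via the affine-local character of the type-$\alpha$ condition established in Lemma \ref{lemma: type alpha affine}.

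First I would handle part (4). Proposition \ref{coh_pe_thm}(4) already provides an exact equivalence $\wt{}\colon \Amod \to \Qcoh(\spec A)$. By Lemma \ref{lemma: type alpha affine}, $\wt{M}$ lies in $\Qcoh(X)^{\alpha}$ if and only if $M \in \Amoda$. Moreover, an $A$-linear map $\phi\colon M \to N$ between type-$\alpha$ modules satisfies condition \eqref{spec_condition} on every localization $\phi_{\p}$ if and only if it satisfies \eqref{spec_condition} itself, by Proposition \ref{alpha_localization} and Proposition \ref{module_localization_prop}(3). Hence $\wt{}$ restricts to a full and essentially surjective functor $\Amoda \to \Qcoh(X)^{\alpha}$; exactness is immediate since admissible biCartesian squares are characterized stalk-locally. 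The coherent restriction works identically.

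For part (1), I would verify the five axioms of Definition \ref{pe_def} for $(\Qcoh(X)^{\alpha}, \MM, \EE)$, where $\MM$ and $\EE$ consist of morphisms inducing type-$\alpha$ admissible monos/epis on every stalk. Axioms (1) and (2) follow by direct inspection of \eqref{cond_alpha} and \eqref{spec_condition}. For axioms (3)-(5) I would work on an affine cover $\{U_i = \spec A_i\}$. By Remark \ref{pe_structure_Amod}, the completing object in each affine piece is obtained as a quotient by the image of a submodule or as an inverse image submodule. Proposition \ref{alpha_localization}, combined with proto-abelianness of $\on{Fun}(A_i,\vfun) \simeq \Amoda$ from Example \ref{example: proto-exact}(5), shows that these completions remain type-$\alpha$ and that the new edges satisfy \eqref{spec_condition} on every stalk. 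Compatibility on overlaps follows from Proposition \ref{module_localization_prop}(1), so the local completions glue to a global quasicoherent sheaf via Section \ref{gluing}, which is type-$\alpha$ by Lemma \ref{lemma: type alpha affine}(3). The coherent case is identical since the constructions preserve finite generation.

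For part (2), I would observe that any admissible exact sequence $\ses{\F'}{\F}{\F''}$ satisfies $\on{supp}(\F) = \on{supp}(\F') \cup \on{supp}(\F'')$, so the biCartesian completions from (1) automatically restrict to $\Qcoh(X)^{\alpha}_Z$ when the input is supported in $Z$. For (3), one either verifies directly that the local completions preserve annihilation by $\mc{I}$ (which is immediate, since submodules, quotients, and inverse images of $\mc{I}$-annihilated modules are themselves $\mc{I}$-annihilated), or invokes the equivalence $\iota_*\colon \Qcoh(X_{\mc{I}})^{\alpha} \simeq \Qcoh(X)^{\alpha}_{\mc{I}}$ coming from the closed immersion $\iota \colon X_{\mc{I}} \hookrightarrow X$ and reduces to part (1) applied to $X_{\mc{I}}$. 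The main obstacle is verifying that biCartesian completions built module-theoretically from type-$\alpha$ data remain type-$\alpha$ with connecting morphisms satisfying \eqref{spec_condition} stalk-wise; the former is handled because completions decompose into localizations, submodule formations, and quotients, each of which preserves \eqref{cond_alpha} (the localization case being Proposition \ref{alpha_localization}), and the latter requires care only because $\Qcoh(X)^{\alpha}$ is a non-full subcategory of $\Qcoh(X)$, but at every stalk the connecting morphisms are either inclusions of submodules or quotient maps $M \twoheadrightarrow M/V$, both of which trivially satisfy \eqref{spec_condition}.
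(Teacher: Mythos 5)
Your proposal is correct and follows essentially the same route as the paper, whose entire proof of this proposition reads ``This is proved just as Proposition \ref{coh_pe_thm}, using Lemma \ref{alpha_localization} in (3)'' --- i.e., repeat the affine-local verification and gluing argument for the proto-exact structure on $\Qcoh(X)$ and $\Coh(X)$, with Proposition \ref{alpha_localization} supplying the fact that localization preserves the type-$\alpha$ condition. You have simply made explicit the details (non-fullness of the subcategory, behavior of the biCartesian completions, the stalk-wise check of condition \eqref{spec_condition}) that the authors leave implicit.
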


\begin{proof}
This is proved just as Proposition \ref{coh_pe_thm}, using Lemma \ref{alpha_localization} in (4). 
\end{proof}

\begin{rmk}
In \cite{HW} the authors define a class of quasicoherent sheaves termed \emph{partially cancellative} (PC) and show that these form a \emph{quasi-exact category}. The PC condition is very similar in spirit to our \typea condition, but is essentially different, since it involves "canceling" an element of the module to deduce the equality of elements of the monoid, whereas the \typea condition involves canceling an element of the monoid to deduce equality of elements in the module. For instance, in Example \ref{alphaA1}, a rooted tree with non-trivial branching is PC but not \typea, whereas a pure cycle is \typea but not PC. 
\end{rmk}

\subsection{Ext and scalar extension}

%\begin{itemize}
%\item Explain that $Ext$ in both $\Amod$ and $\Amoda$ are $A$-modules
%\item Corresponding statements for $\Coh(X)$ and $\Coh^{\alpha}(X)$
%\item basechange for $Ext$
%\end{itemize}

We recall from Section \ref{pe_categories} that in any proto-exact/proto-abelian category $\on{Ext}(M,N)$ denotes the set of equivalence classes of admissible short exact sequences $\ses{N}{P}{M} $ where two such are equivalent if the diagram \eqref{cd_ses} commutes. Given a commutative ring $k$, the scalar extension functors of Section \ref{realization} are exact. Thus, given a monoid $A$, and $M, N \in \Amod$, we obtain a map a pointed sets (with base-points being the split extensions)
\begin{equation}
\E_k : \on{Ext}_{\Amod}(M,N) \rightarrow \on{Ext}^1_{k[A]} (k[M], k[N]) 
\end{equation}
sending the admissible exact sequence $\ses{N}{P}{M}$ to $ 0 \rightarrow k[N] \rightarrow k[P] \rightarrow k[M] \rightarrow 0$. Similarly, given a monoid scheme $(X, \mc{O}_X)$, and $\F, \F' \in \Coh(X)$, we obtain a map of pointed sets
\begin{equation} \label{ext_map_coh}
{\E}_k: \on{Ext}_{\Coh(X)}(\F', \F) \rightarrow \on{Ext}^1_{\Coh(X_k)}(\F'_k, \F_k).   
\end{equation}  
where we have abused notation by using $\E_k$ for both categories $\Amod$ and $\Coh(X)$. 
Our first observation is that the maps ${\E}_k$ need not be injective. 

\begin{myeg}
Let $A = \langle t \rangle$, and consider the following $A$-modules:
\begin{itemize}
\item $N=\{*, a, b \}$, with $t\cdot a = b$, $t \cdot b = *$.
\item $P= \{*, a, b, c \}$, with $t \cdot a = b$,  $t \cdot b = *$, $t \cdot c = b$.
\item $M = \{ *, c \}$ with $t \cdot c =*$.
\end{itemize}
Defining maps by the property that elements get sent to those with the same labels, we then have admissible exact sequences $$\ses{N}{P}{M}$$ and $$\ses{N}{N \oplus M}{M},$$ which are clearly not equivalent since $P$ and $N \oplus M$ are not isomorphic, $P$ being indecomposable. However, if $k$ is a field, then $ \psi_k:  k[P] \simeq k[N] \oplus k[M] $, defined by $\psi_k (a) = a, \psi_k (b)=b, \psi_k (c) = c+a$ is an isomorphism of $k[t]$-modules, which (after taking identity isomorphisms on $k[N]$ and $k[M]$) yields and isomorphism of exact sequences $ 0 \rightarrow k[N] \rightarrow k[P] \rightarrow k[M] \rightarrow 0$, and $ 0 \rightarrow k[N] \rightarrow k[N \oplus M] \rightarrow k[M] \rightarrow 0.$
\end{myeg}

The example \cite[Example 4]{lorscheid2018quasicoherent} exhibits infinitely many distinct extensions in $\Coh(\mathbb{P}^1)$ mapping to the same one on the realization $\mathbb{P}^1_k$. 

We now proceed to show that the maps $\E_k$ are injective if the modules/sheaves are of \typea. We begin with the following lemma.

%\subsection{Finitarity of $\Coh^{\alpha}(X)$ for  projective monoid schemes}

%Finitarity is the key property required of a proto-exact or proto-abelian category in order to define a Hall algebra according to the recipe outlined in Section \ref{subsection: Hall algebras of finitary proto-exact}. As noted in Section \ref{section: Type sheaves and Hall Algebras}, $\Coh(X)$ fails to be finitary even when $X=\Pone$. Our goal in this section is to prove that $\Coh^{\alpha}(X)$ is finitary when $X \otimes k$ is a smooth projective toric variety. This happens when $X=X_{\Delta}$, where $\Delta$ is the normal fan of an $n$-dimensional lattice polytope in $M_\mathbb{R} \simeq \mathbb{R}^n$, whose cones are all smooth. 

\begin{lem}\label{lemma: unique lift}
Let $A$ be a monoid, and $M, N \in \Amoda$. If two exact sequences in $\Amoda$
\begin{equation} \label{twoses}
\begin{tikzcd}
0 \arrow{r}& M \arrow{r}{i} & P \arrow{r}{\pi}& N \arrow{r} & 0 \\
0 \arrow{r}& M \arrow{r}{i'} & P' \arrow{r}{\pi'}& N \arrow{r} & 0 \\
\end{tikzcd}
\end{equation}
become equivalent after scalar extension to a field $k$, then there exists a unique isomorphism $g \in \on{Hom}_{\Amoda}(P, P')$ making the diagram 
\begin{equation} \label{cdamod}
\begin{tikzcd}
0 \arrow{r}& M \arrow{dr}[swap]{i'}  \arrow{r}{i} & P \arrow{d}{g} \arrow{r}{\pi}& N  \arrow{r} & 0 \\
& & P' \arrow{ur}[swap]{\pi'}
\end{tikzcd}
\end{equation}
commute. In other words, the two sequences in \eqref{twoses} are equivalent as admissible exact sequences in $\Amoda$. 
\end{lem}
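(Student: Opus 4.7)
The plan is to construct $g\colon P \to P'$ directly as a bijection of pointed sets and then use the hypothesized $k[A]$-linear isomorphism $g_k$ to verify $A$-equivariance. The key preliminary observation is that the admissible epimorphism condition \eqref{spec_condition} forces $\pi$ to restrict to a bijection $P \setminus i(M) \to N \setminus \{*\}$, and analogously for $\pi'$. This suggests the definition: set $g(i(m)) := i'(m)$ for each $m \in M$, and for $p \in P \setminus i(M)$, take $g(p)$ to be the unique element of $P' \setminus i'(M)$ satisfying $\pi'(g(p)) = \pi(p)$. Then $g$ is manifestly a bijection of pointed sets making \eqref{cdamod} commute.

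The heart of the argument is verifying $g(ap) = a g(p)$ for every $a \in A$ and $p \in P$. For $p \in P$, the condition $\pi'_k \circ g_k = \pi_k$ together with the fact that $N \setminus \{*\}$ is a $k$-basis of $k[N]$ forces a decomposition
\[
g_k(p) = g(p) + \sum_{m \in M \setminus \{*\}} c^p_m\, i'(m), \qquad c^p_m \in k,
\]
of finite support. Applying $k[A]$-linearity to $g_k(ap) = a\cdot g_k(p)$ and comparing with the analogous decomposition of $g_k(ap)$ yields a linear relation among the basis elements $P' \setminus \{*\}$ of $k[P']$. When $\pi(ap) \neq *$, the element $a g(p)$ automatically lies in $P' \setminus i'(M)$ and projects to $\pi(ap)$, so $a g(p) = g(ap)$ by uniqueness of lifts. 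When $\pi(ap) = *$, the delicate case is $a g(p) = i'(\tilde m) \neq *$ with $a g(p) \neq g(ap)$; here examination of the relevant coefficient in the linear relation, combined with the type-$\alpha$ property of $M$ (multiplication by $a$ is at most one-to-one on nonzero targets), produces some $m^\sharp \in M \setminus \{*\}$ with $a m^\sharp = \tilde m$ (or with $a m^\sharp$ matching the expected value $g(ap)$). Either $a\, i'(m^\sharp) = a g(p)$, forcing $g(p) = i'(m^\sharp)$ by type-$\alpha$ in $P'$ and contradicting $g(p) \notin i'(M)$, or $a\, i(m^\sharp) = ap$, forcing $p = i(m^\sharp)$ by type-$\alpha$ in $P$ and contradicting $p \notin i(M)$.

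Once $g$ is known to be an $A$-module homomorphism, it is automatically an isomorphism in $\Amoda$: it is a bijection of pointed sets, hence satisfies \eqref{spec_condition} trivially, and the set-theoretic inverse is also an $A$-homomorphism by the symmetric argument applied to $g_k^{-1}$. Uniqueness of $g$ is then straightforward: any $g'$ making \eqref{cdamod} commute must coincide with $g$ on $i(M)$ (from $g' \circ i = i'$ and injectivity of $i'$), and $g'$ carries $P \setminus i(M)$ into $P' \setminus i'(M)$ (applying the previous remark to the inverse of $g'$), whence $\pi'(g'(p)) = \pi(p)$ forces $g'(p) = g(p)$ via the lift bijection.

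The principal obstacle is the equivariance check in the second paragraph. One might initially hope that $g$ can be taken to be the restriction of $g_k$ to $P$, but this fails in general: the coefficients $c^p_m$ need not vanish, so $g_k$ need not carry basis elements of $k[P]$ to basis elements of $k[P']$. The extra linear-algebraic slack in $g_k$ must instead be absorbed by repeated use of the type-$\alpha$ property of $M$, $P$, and $P'$, which is precisely what promotes the $k[A]$-level identity to an $A$-level identity on the underlying pointed sets.
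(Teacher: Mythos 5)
Your proof is correct and follows essentially the same route as the paper's: both take as $g$ the unique set-theoretic candidate determined by $i,i'$ and the bijection $P\setminus i(M)\to N\setminus\{*\}$ induced by $\pi$, both decompose the given $k[A]$-isomorphism as $g$ plus a correction supported on $k[i'(M)]$ (the paper's $\ol{L}(v_s)$, your $\sum_m c^p_m\,i'(m)$), and both use the type-$\alpha$ condition on $M$, $P$, $P'$ to forbid the cancellations that would be required for $A$-equivariance to fail. The only cosmetic difference is organizational: you dispatch the case $\pi(ap)\neq *$ purely set-theoretically via uniqueness of lifts, where the paper (its case (iii)) also routes it through the $k$-linear equation.
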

\begin{proof}
As pointed sets, we write explicitly
\[
M = \{0,  x_r \}_{r \in I}, \quad N=\{0, y_s \}_{s \in J},
\]
- that is, $x_r$ (resp.~$y_s$) are the nonzero elements in $M$ (resp.~$N$) for all $r \in I$ (resp.~$s \in J$). We identify $P = M \vee N = P'$. It follows that if $g: P \rightarrow P' $ making \eqref{cdamod} commute exists, it is uniquely (as a map of pointed sets) determined on $M, N$, and therefore unique. 

Since the exact sequences \eqref{twoses} are equivalent after base change to a field $k$, we have a commutative diagram of $k[A]$-modules:	
\begin{equation}\label{diagram: tensor with k}
\begin{tikzcd}
0 \arrow{r}& k[M]  \arrow{dr}[swap]{i'}  \arrow{r}{i} & k[P] \arrow{d}{f} \arrow{r}{\pi}& k[N]  \arrow{r} & 0 \\
& & k[P']  \arrow{ur}[swap]{\pi'}
\end{tikzcd}
\end{equation}
where we used the same letter to denote $k$-linear extensions of maps, and the map $f:k[P] \to k[P'] $ is an isomorphism of $k[A]$-modules.
% As pointed sets ($\fun$-vector spaces), we identify $P = M \vee N$ and $P'$ also. We let
%\[
%M = \{0,  x_r \}_{r \in I}, \quad N=\{0, y_s \}_{s \in J},
%\]
We write $P = \{0, u_r, v_s \}_{r \in I,s \in J}$ with $i:M \ra P$ sending $x_r$ to $u_r$ and $\pi:P \ra N$ sending $v_s$ to $y_s$ and all the $u_r$ to $0$. Similarly, $P'=\{ 0, \ol{u}_r, \ol{v}_s \}$ with the maps $i':M \to P'$ and $\pi':P' \to N$. The unique isomorphism of pointed sets $g: P \to P'$ making \eqref{cdamod} commute then sends
%Then, we have the following commutative diagram:
%\begin{equation}\label{diagram: two exact sequences of pointed sets}
%\begin{tikzcd}
%0 \arrow{r}& M \arrow{dr}{i'}  \arrow{r}{i} & P \arrow{d}{g} \arrow{r}{\pi}& N \arrow{r} & 0 \\
%& & P' \arrow{ur}{\pi'}
%\end{tikzcd}
%\end{equation}
$u_r$ to $\ol{u}_r$ and $v_s$ to $\ol{v}_s$. As $k$-vector spaces (not $k[A]$ -modules), we identify $k[P] = k[M] \oplus k[N]$, with the first summand spanned by $u_r$'s and the second by $v_s$'s, and similarly with $k[P']$. 

Since $f: k[P]  \ra k[P'] $ is a $k[A]$-isomorphism inducing the identity on $k[M]$ and $k[N]$, we must have 
\begin{equation}\label{eq: f maps to}
f(u_r) = \ol{u}_r, \textrm{ and } f(v_s) = \ol{v}_s + \ol{L}(v_s), 
\end{equation}
where $\ol{L}(v_s) \in k[M]$ (i.e. it is a linear combination of $\ol{u}_r$'s). 
%We show that the map $g:P \to P'$ in \eqref{diagram: two exact sequences of pointed sets} defined by $$g(u_r) = \ol{u}_r, \; g(v_s) = \ol{v}_s$$ is an $A$-module isomorphism. It is clearly a bijection, and hence we only have to check that $g$ is $A$-equivariant, that is,
%\begin{equation}\label{eq: equivariant}
%g(ap)=ag(p), \quad \forall a \in A,~p \in P. 
%\end{equation}
We now use the fact that $f$ in \eqref{diagram: tensor with k} is a $k[A]$-module isomorphism and $M,N,P, P' \in \Amoda$ to deduce that $g$ is an $A$-module isomorphism. Since $g$ is a bijection, this amounts to checking it is $A$-equivariant, i.e.
\begin{equation}\label{eq: equivariant}
g(ap)=ag(p), \quad \forall a \in A,~p \in P. 
\end{equation}

The case when $p=0$ is clear, and hence there are four cases to consider:
\begin{enumerate}
	\item[(i)] 
	$p = u_r$ for some $r$. 
	\item[(ii)] 
	$p = v_s$, $ap = u_n \neq 0$ for some $s, n$. 
	\item[(iii)] 
	$p = v_s$, $ap = v_t \neq 0$ for some $s,t$.
	\item[(iv)] 
	$p= v_s$, $ap = 0$. 
\end{enumerate}

We prove that in each case \eqref{eq: equivariant} holds.

\begin{enumerate}
	
\item[(i)] 
The first case is trivial since $f$ and $g$ agree on elements in $M$, and if $p \in M$ (by considering $M$ as a subset of $P$), then so is $ap$.
\item[(ii)] 
In this case, one may observe that $f(ap) = a f(p)$ implies the following equation:
	\begin{equation} \label{eq(1)}
	a \ol{v}_s + a \ol L(v_s) = \ol{u}_n.
	\end{equation}
	We have from \eqref{eq: f maps to} that 
	\[
	\ol{v}_s = f(v_s - L(v_s)),
	\]
	where $L(v_s) = f^{-1}(\ol{L}(v_s))$. In concrete terms, it is the same linear combination of $u$'s as $\ol{L}(v_s)$ is of $\ol{u}$'s.
	We first show that $a \ol{v}_s \neq 0$, for if $a \ol{v}_s = 0$, 
\[
0 = a \ol{v}_s = a f(v_s - L(v_s)) = f(a(v_s - L(v_s))) 
\]	
	
and since $f$ is an isomorphism, we have that
\begin{equation}\label{eq: last equation}
a v_s - a L(v_s) = 0.
\end{equation}
From the assumption, we know $a v_s = u_n \neq 0$, and so for \eqref{eq: last equation} to hold, there is a cancellation between this term and some term in $a L(v_s)$, which is a linear combination of $au$'s. %This is where the type-$\alpha$ condition comes in. 
For this to happen, we would have to have $a v_s = a u_j \neq 0$ for some $j$, but from the type-$\alpha$ condition this is impossible since these are distinct. This proves our claim.

Returning to \eqref{eq(1)}, knowing that $a \ol{v}_s \neq 0$ the same kind of argument shows that unless $a \ol{v}_s = \ol{u}_n$, $a \ol{v}_s$ must cancel with some term occurring in $a \ol{L}(v_s)$, which again by the type-$\alpha$ condition is impossible. In particular, this implies that
\[
a g(v_s)=  a \ol{v}_s = \ol{u}_n = g(u_n) = g( a v_s).
\]
\item[(iii)] 
	In the third case, $f(ap) = af(p)$ implies the equality 
	\[
	\ol{v}_t + \ol L(v_t) = a \ol{v}_s + a \ol L(v_s).
	\]
	If $a \ol{v}_s \in M \subset k[M] $, then $\ol{v}_t \in k[M]$. In particular, $\ol{v}_t$ is a linear combination of $\ol{u}$'s, which is impossible. It follows that $a \ol{v}_s \in N \subset k[N]$, and hence we have that
	\[
	\ol{v}_t - a \ol{v}_s = a \ol{L}(v_s) - \ol{L}(v_t),
	\] 
	where the LHS lies in $k[N]$ and the RHS in $k[M]$. Thus, both LHS and RHS are zero.  In particular, $\ol{v}_t = a \ol{v}_s$, and hence
	\[
	g(a v_s) = g(v_t) = \ol{v}_t = a \ol{v}_s = a g(v_s).
	\]
	\item[(iv)] 
	In this case, we have that $$ a \ol{v}_s + a \ol{L}(v_s) = 0, $$ and if $a \ol{v}_s \neq 0$, it must cancel a term in $a \ol{L}(v_s)$ which is impossible by the type-$\alpha$ condition. Therefore, we have that $g(av_s)=0=a\ol{v}_s=ag(v_s)$. 
\end{enumerate}
This completes the proof.
\end{proof}

We now use this result to deduce that the map \eqref{ext_map_coh} is an injection for any field $k$. 

\begin{prop} \label{ext_scalar_extension}
Let $(X, \mc{O}_X)$ be a monoid scheme, $k$ a field, and $\mc{F}, \mc{F}' \in \Coh^{\alpha}(X)$. Then the map
\[
\E_k: \on{Ext}_{\Coh^{\alpha}(X)}(\F', \F) \rightarrow \on{Ext}^1_{\Coh(X_k)}(\F'_k, \F_k)   
\]
is an injection.
\end{prop}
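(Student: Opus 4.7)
The plan is to reduce the global sheaf statement to the affine module statement of Lemma \ref{lemma: unique lift} by working on an affine cover and gluing via uniqueness.

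Suppose we are given two admissible short exact sequences in $\Coh^{\alpha}(X)$
\[
\ses{\F}{\G}{\F'} \quad \text{and} \quad \ses{\F}{\G'}{\F'}
\]
which become equivalent after applying $-_k$, i.e.~there is an isomorphism $f \colon \G_k \to \G'_k$ of $\mc{O}_{X_k}$-modules making the obvious diagram with $\F_k$ and $\F'_k$ commute. We want to produce a corresponding isomorphism $g \colon \G \to \G'$ in $\Coh^{\alpha}(X)$ that is compatible with the inclusions of $\F$ and projections to $\F'$.

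First, choose a finite affine open cover $\{ U_i = \spec A_i \}$ of $X$. On each $U_i$, Proposition \ref{coh_pa_prop}(4) together with Proposition \ref{module_localization_prop}(1) shows that we obtain two admissible short exact sequences of \typea $A_i$-modules which become equivalent after scalar extension to $k[A_i]$ (since realization is compatible with restriction to affines by construction in Section \ref{realization}, and restriction of $f$ to $(U_i)_k$ provides the required $k[A_i]$-module isomorphism). By Lemma \ref{lemma: unique lift}, there is a \emph{unique} isomorphism $g_i \colon \G(U_i) \to \G'(U_i)$ of $A_i$-modules making the equivalence diagram commute in $\on{Mod}_{A_i}^{\alpha}$.

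Second, I glue the $g_i$ to a global morphism. On a double intersection $U_i \cap U_j$, cover the intersection by affine opens $V \subset U_i \cap U_j$ and consider $g_i\vert_V$ and $g_j\vert_V$ as $\mc{O}_X(V)$-module isomorphisms $\G(V) \to \G'(V)$. Each one makes the equivalence diagram for the localized sequences on $V$ commute, so by the uniqueness clause of Lemma \ref{lemma: unique lift} applied over $V$, we have $g_i\vert_V = g_j\vert_V$. Hence the $g_i$ agree on overlaps and, by the gluing construction for morphisms of quasicoherent sheaves recalled in Section \ref{gluing}, assemble into a well-defined morphism $g \colon \G \to \G'$ of $\mc{O}_X$-modules whose stalks are \typea-isomorphisms (since this property is local and holds on each $U_i$). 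The commutativity of the equivalence diagram for $g$ follows from the corresponding local statements.

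The only step that needs care is the gluing, but since Lemma \ref{lemma: unique lift} delivers \emph{uniqueness} of the local lift, compatibility on overlaps is automatic and the construction is essentially formal. No Krull--Schmidt or structural input about $\F, \F'$ is required beyond the type-$\alpha$ condition, which is affine-local by Lemma \ref{lemma: type alpha affine}. This establishes the injectivity of $\E_k$.
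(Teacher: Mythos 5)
Your proof is correct and follows essentially the same route as the paper: both arguments reduce to Lemma \ref{lemma: unique lift} on an affine cover and exploit its uniqueness clause to globalize (the paper constructs the candidate isomorphism globally as a map of sheaves of pointed sets and then verifies $\mc{O}_X$-equivariance affine-locally via the lemma, whereas you construct it affine-locally and glue by uniqueness, which is the same idea in a different order).
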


\begin{proof}
Let $\ses{\F'}{\G}{\F}$ and $\ses{\F'}{\G'}{\F}$ be two admissible exact sequences in $\Coh^{\alpha}(X)$ whose images under $\E_k$ are equivalent. The same line of argument as given at the beginning of the proof of Lemma  \ref{lemma: unique lift} shows that there is a unique isomorphism of sheaves of pointed sets $\phi: \G \to \G'$ (not a priori of $\mc{O}_X$-modules), making the diagram

\begin{equation} \label{cdsheaves}
\begin{tikzcd}
0 \arrow{r}& \F \arrow{dr}{}  \arrow{r}{} & \G \arrow{d}{\phi} \arrow{r}{}& \F'  \arrow{r} & 0 \\
& & \G' \arrow{ur}{}
\end{tikzcd}
\end{equation}
commute. We want to show that $\phi$ is $\mc{O}_X$-equivariant. 

Let $U \subset X$ be open, $\{ U_{i} \}_{i \in I}$ be an affine open cover of $U$. Let $a \in \mc{O}_X (U)$ and $\mu_a:\mc{G}_k|_U \to \mc{G}_k|_U$ multiplication by $a$. Consider the map of sheaves of $k$-vector spaces
\[
\psi:=\mu_a \circ \phi_k|_U - \phi_k|_U\circ \mu_a \in Hom_{Sh_{k}}(\G_k \vert_U, \G'_k \vert_U),
\]
where $\phi_k$ denotes the $k$-linear extension of $\phi$. 
By Lemma \ref{lemma: unique lift}, $\psi \vert_{U_{i}} = 0$ for every $i \in I$. This means that $\phi(U)(a \cdot s) = a \phi(U)( s) $ for every $s \in \G(U)$, showing that $\phi$ is an isomorphism in $\Coh^{\alpha}(X)$. This proves that $\E_k$ is injective. 
\end{proof}

We now arrive at our main result:
%As we saw in the proof Proposition \ref{KS_prop}, for any coherent sheaves $\mathcal{F}$ and $\mathcal{F}'$ on a projective monoid scheme $X$, we have that $|\textrm{Hom}_{\mathcal{O}_X}(\mathcal{F},\mathcal{F}')| < \infty$. We now obtain

\begin{mythm} \label{finitary_alpha}
Let $\Delta$ be the fan of a projective toric variety, and $(X_{\Delta}, \mc{O}_X)$ the corresponding monoid scheme. Then
\begin{enumerate}
\item $\Coh^{\alpha}(X_{\Delta})$ is a finitary proto-abelian category.  
\item If $Z \subset X_{\Delta}$ is a closed subset, the category $\Coh^{\alpha}(X_{\Delta})_Z$ of \typea sheaves supported in $Z$ is a finitary proto-abelian category. 
\item If $\mc{I} \subset \mc{O}_X$ is a quasicoherent sheaf of ideals, the category $\Coh^{\alpha}(X_{\Delta})_{\mc{I}}$ is a finitary proto-abelian category. 
\item The Hall algebra $H_{\Coh^{\alpha}(X_{\Delta})}$ is isomorphic, as  a Hopf algebra, to an enveloping algebra $U(\mathfrak{n}^{\alpha}_X)$, where $\mathfrak{n}^{\alpha}_X$ has the indecomposable sheaves in $\Coh^{\alpha}(X_{\Delta})$ as a basis. 
\end{enumerate}
\end{mythm}

\begin{proof}
For the first part, let $k$ be a finite field, and let $\F, \F' \in \Coh(X_{\Delta})$. Then $\F_k, \F'_k \in \Coh(X_{\Delta,k})$,  and since $X_{\Delta, k}$ is a projective variety over $k$, it is well-known that $\on{Hom}_{\mc{O}_{X_{\Delta,k}}}(\F_k, \F'_k)$ and $\on{Ext}^1_{\mc{O}_{X_{\Delta,k}}}(\F_k, \F'_k) $ are finite-dimensional vector spaces over $k$, hence finite sets. The scalar extension map $$\on{Hom}_{\Coh^{\alpha}_{X_{\Delta}}}(\F, \F') \rightarrow \on{Hom}_{\Coh^{\alpha}_{X_{\Delta,k}}}(\F_k, \F'_k)$$ is injective, showing that the LHS is finite. By Proposition \ref{ext_scalar_extension}, $ \on{Ext}_{\Coh^{\alpha}(X_{\Delta})}(\F', \F)$ is finite as well. This shows that $\Coh^{\alpha}(X_{\Delta})$ is finitary, and proto-abelian by Proposition \ref{coh_pa_prop}. 

The second and third parts follow from the fact that $\Coh^{\alpha}(X_{\Delta})_Z$,  $\Coh^{\alpha}(X_{\Delta})_{\mc{I}}$ are full subcategories of $\Coh^{\alpha}(X_{\Delta})$ and the first part. 
The last part follows from Proposition \ref{Hall_theorem}.
\end{proof}

To simplify notation, we will write
\[
H^{\alpha}_X := H_{\Coh^{\alpha}{(X)}}
\]
whenever it's defined. 

\subsection{The Hall algebra of $\Pone$} \label{Pone}

We review here the example of the Hall algebra $H^{\alpha}_{\Pone}$ of the category $\Coh^{\alpha}(\Pone)$ considered in \cite{Sz1}, where we refer the interested reader for details. It is shown in \cite{Sz1} that the indecomposable objects of $\Coh^{\alpha}(\Pone)$ are:

\begin{itemize}
\item 
The line bundles $\mc{O}(n)$ (see Example \ref{P1sheaves}).
\item 
The torsion sheaves $\mc{T}_{0, m}$, $\mc{T}_{\infty, m}$ (see Example \ref{P1sheaves}).
\item 
``Cyclotomic'' sheaves $\mc{C}_l$ represented in notation of Example \ref{P1sheaves} as $(M, M', \phi)$, where each of the associated directed graphs $\Gamma_M, \Gamma_{M'}$ is a pure cycle of length $l$ (see Figure 5 of Example \ref{alphaA1}), and $\phi: M_t \rightarrow M'_{t^{-1}}$ is any isomorphism (all choices of such a $\phi$ yields isomorphic sheaves).  
\end{itemize}

By Proposition \ref{Hall_theorem}, the Hall algebra $H^{\alpha}_{\Pone}$ is isomorphic to an enveloping algebra $U(\n^{\alpha})$, where $\n^{\alpha}$ has as a basis the isomorphism classes the above indecomposable sheaves. The commutation relations among the basis $\mc{O}(n), \mc{T}_{x,m}, \mc{C}_k$ are (see \cite{Sz1}): 
$$
[\mc{T}_{x,m}, \mc{O}(n)] = \mc{O}(m+n)
$$
with all other commutators $0$. 

Let $$ h_1 = \begin{pmatrix} 1 & 0 \\ 0 & 0 \end{pmatrix}, \quad  h_2 = \begin{pmatrix} 0 & 0 \\ 0 & 1 \end{pmatrix},\quad  e = \begin{pmatrix} 0 & 1 \\ 0 & 0 \end{pmatrix},$$ and let $\mathfrak{b}$ be the Lie subalgebra of $\mathfrak{gl}_2[t,t^{-1}]$ with basis $\{ h_1 \otimes t^r, h_2 \otimes t^s, e \otimes t^n \}, r,s \geq 1, n \in \mathbb{Z}$. Let $\mathfrak{a}$ denote the Lie subalgebra of $\mathfrak{n}^{\alpha}$ spanned by $\{ \mc{T}_{0, r}, \mc{T}_{\infty,s}, \mc{O}(n) \}, r, s \geq 0, n \in \mathbb{Z}$. We have an isomorphism
 $$ \rho: \mathfrak{a} \rightarrow \mathfrak{b} $$
 \begin{align*} \rho(\mc{T}_{0, r}) &= h_1 \otimes t^r  \\ \rho(\mc{T}_{\infty, s}) &= -h_2 \otimes t^s \\ \rho(\mc{O}(n)) &=e \otimes t^n \end{align*} The Lie algebra $\mathfrak{b}$ can be viewed as a non-standard Borel subalgebra in $\mathfrak{gl}_2 [t,t^{-1}]$. 
Since the generators $\mc{C}_l$ are central, we have a splitting $\mathfrak{n}^{\alpha} \simeq \mathfrak{a} \oplus \mathfrak{k}$, where $\mathfrak{k}$ is an abelian Lie subalgebra on the countably many generators $\mc{C}_1, \mc{C}_2, \cdots$. To summarize

\begin{prop} \label{Pone_alpha}
$H^{\alpha}_{\Pone}$ is isomorphic as a Hopf algebra to $U(\mathfrak{a} \oplus \mathfrak{k})$, where $\mathfrak{a}$ is isomorphic to a subalgebra of $\mathfrak{gl}_2 [t, t^{-1}]$, and $\mathfrak{k}$ is an abelian Lie algebra with generators $\mc{C}_1, \mc{C}_2, \cdots$. 
\end{prop}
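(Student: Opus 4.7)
The plan is to apply Theorem \ref{finitary_alpha} to write $H^{\alpha}_{\Pone} \simeq U(\mathfrak{n}^{\alpha})$ where $\mathfrak{n}^{\alpha}$ is the Hall Lie algebra of primitive elements, and then to explicitly identify $\mathfrak{n}^{\alpha}$ as the semidirect-sum $\mathfrak{a} \oplus \mathfrak{k}$ by computing all Lie brackets on the indecomposable basis. The bulk of the work is to (i) enumerate the indecomposable objects of $\Coh^{\alpha}(\Pone)$, (ii) compute the structure constants $g^{Z}_{X,Y}$ for $X,Y$ running over indecomposables, and (iii) check that the map $\rho$ given in the statement is a Lie algebra isomorphism onto the prescribed subalgebra of $\mathfrak{gl}_2[t,t^{-1}]$.

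For step (i), I would use the triple description $(M, M', \phi)$ of quasicoherent sheaves on $\Pone$ from Example \ref{P1sheaves} and combine it with the classification of indecomposable $\langle t \rangle$-modules of \emph{type}-$\alpha$ from Example \ref{alphaA1}: these are either finite/infinite ladders or pure cycles. Gluing two such shapes along a compatible isomorphism on $\mathbb{G}_m$ yields exactly the three families $\mc{O}(n)$, $\mc{T}_{x,m}$ and $\mc{C}_l$, since any non-cyclic ladder on one side must truncate to a line bundle after gluing, while two pure cycles can only be glued in an essentially unique way (up to isomorphism of the sheaf). The indecomposables then form a basis of $\mathfrak{n}^{\alpha}$ by Proposition \ref{Hall_theorem}.

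For step (ii), I would compute the Hall bracket $[\delta_{[X]}, \delta_{[Y]}] = \delta_{[X]} \bullet \delta_{[Y]} - \delta_{[Y]} \bullet \delta_{[X]}$ by enumerating admissible short exact sequences between pairs of indecomposables. The key calculation is that for $X = \mc{T}_{x,m}$ and $Y = \mc{O}(n)$, there is (up to isomorphism) exactly one non-split admissible extension, namely $\mc{O}(n) \hookrightarrow \mc{O}(n+m) \twoheadrightarrow \mc{T}_{x,m}$, as one can check in the triple model by inspecting which subsheaves $(N,N',\phi|_{}) \subset (\langle t \rangle, \langle t^{-1} \rangle, \phi_{n+m})$ yield $\mc{O}(n)$ with the right torsion quotient at $0$ or $\infty$. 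All other brackets among $\{\mc{O}(n), \mc{T}_{x,m}, \mc{C}_l\}$ vanish: two line bundles have no non-split extensions in $\Coh^{\alpha}(\Pone)$ (a non-split extension would require a gluing across $\mathbb{G}_m$ that is incompatible with the type-$\alpha$ condition), torsion sheaves at distinct points cannot extend each other as they have disjoint supports, and the cyclotomic sheaves $\mc{C}_l$ are central because any admissible sub or quotient of $\mc{C}_l$ is again a union of $\mc{C}_l$'s disjoint from line bundles and point torsion. After taking commutators the only surviving bracket is $[\mc{T}_{x,m}, \mc{O}(n)] = \mc{O}(m+n)$.

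For step (iii), the verification that $\rho$ respects brackets is a direct matrix computation in $\mathfrak{gl}_2[t,t^{-1}]$: for example $[h_1 \otimes t^m, e \otimes t^n] = e \otimes t^{m+n}$ and $[-h_2 \otimes t^m, e \otimes t^n] = e \otimes t^{m+n}$, matching the bracket relations above, while $[h_1 \otimes t^m, h_2 \otimes t^n] = 0$ matches the vanishing of torsion-torsion commutators. The image is then the $k$-linear span of $\{h_1 \otimes t^r, h_2 \otimes t^s, e \otimes t^n\}$ with $r, s \geq 1$ and $n \in \mathbb{Z}$, which forms a Lie subalgebra of $\mathfrak{gl}_2[t,t^{-1}]$ (a ``non-standard'' Borel). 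Combining this isomorphism with the central ideal $\mathfrak{k}$ spanned by $\{\mc{C}_l\}_{l \geq 1}$ gives the claimed decomposition $\mathfrak{n}^{\alpha} \simeq \mathfrak{a} \oplus \mathfrak{k}$, and applying $U(-)$ finishes the proof.

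The main obstacle is step (ii): in particular, controlling extensions between two line bundles and showing there are no hidden non-split extensions compatible with the type-$\alpha$ condition requires either a careful local analysis in the triple model or an appeal to Proposition \ref{ext_scalar_extension} to reduce to the well-understood computation of $\on{Ext}^1_{\mc{O}_{\Pone_k}}(\mc{O}(m), \mc{O}(n)) = 0$ on the realization, combined with the observation that any lift of the split extension to $\Coh^{\alpha}$ must itself be split.
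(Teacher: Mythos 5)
Your strategy coincides with the paper's: the paper itself just quotes the classification of indecomposables and the commutation relations from the reference for $\Pone$ and then exhibits $\rho$, so steps (i) and (iii) of your plan are exactly what is needed and are carried out correctly. However, step (ii) has two concrete problems. The first is a missing case: your justification that ``all other brackets vanish'' covers line--line, torsion at distinct points, and the centrality of the $\mc{C}_l$, but omits $[\mc{T}_{x,m},\mc{T}_{x,r}]$ for torsion sheaves at the \emph{same} point $x$. This is precisely the case where the pattern ``no non-split extensions, hence zero bracket'' fails: there is a non-split admissible extension $\mc{T}_{x,r}\hookrightarrow\mc{T}_{x,m+r}\twoheadrightarrow\mc{T}_{x,m}$ (the submodule $t^m\cdot\langle t\rangle/(t^{m+r})$) and likewise one in the other order, and the bracket vanishes only because the two structure constants $g^{\mc{T}_{x,m+r}}_{\mc{T}_{x,m},\mc{T}_{x,r}}$ and $g^{\mc{T}_{x,m+r}}_{\mc{T}_{x,r},\mc{T}_{x,m}}$ are each equal to $1$; this is the symmetric product $\mc{T}_{x,r}\cdot\mc{T}_{x,s}=\mc{T}_{x,r+s}+\mc{T}_{x,r}\oplus\mc{T}_{x,s}$ recorded at the end of Section \ref{Pone}. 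You need to add this computation explicitly.

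The second problem is the proposed shortcut in your last paragraph: reducing the line-bundle case to ``$\on{Ext}^1_{\mc{O}_{\Pone_k}}(\mc{O}(m),\mc{O}(n))=0$ on the realization'' via Proposition \ref{ext_scalar_extension} does not work, because that group is $H^1(\mathbb{P}^1_k,\mc{O}(n-m))$, which is nonzero whenever $n-m\le -2$; injectivity of $\E_k$ then tells you nothing. Your other alternative, the local analysis in the triple model, is the one that actually goes through: an admissible extension of $\langle t\rangle$ by $\langle t\rangle$ in type-$\alpha$ modules is forced to be split (the $t$-action on any lift of the generator of the quotient is determined by the quotient map), and the gluing automorphisms of the rank-two free $\langle t,t^{-1}\rangle$-module preserving the given sub-object are diagonal, so the glued sheaf is a direct sum of two line bundles and the sequence splits. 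With those two repairs the argument is complete and matches the paper's.
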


For fixed $x \in \{ 0, \infty \}$, the torsion sheaves $\mc{T}_{x, r}, r \geq 1$ supported at $x$ multiply according to:
$$ \mc{T}_{x, r} \bullet \mc{T}_{x,s} = \mc{T}_{x, r+s} + \mc{T}_{x,r} \oplus \mc{T}_{x,s} $$
The Hopf subalgebra $H^{\alpha}_x$ of $H^{\alpha}_{\Pone}$ generated by these is therefore isomorphic to the Hopf algebra $\Lambda$ of symmetric functions, via the map sending $\mc{T}_{x,r}$ to $p_r$ - the $r$th power sum. 

\section{ The category of T-sheaves}

Let $X$ be a monoid scheme. In this section we define a full subcategory $\Coh^T(X) \subset \Coh^{\alpha}(X)$ of  \emph{coherent $T$-sheaves} on $X$. It is the sub-category of  $\Coh^{\alpha}(X)$ consisting of sheaves locally isomorphic to direct sums of quotients of ideal sheaves.

\begin{mydef} \label{Tsheaf}

$\F \in \Coh^{\alpha}(X)$ is a \emph{$T$-sheaf} if there is a cover of $X$ by open affines $\{ U_j \}^{r}_{j=1}$ such that for each $j=1, \cdots, r$
\[
\F \vert_{U_j} \simeq \oplus^{k_j}_{i=1} \mc{I}_i/\mc{J}_i
\]
where $\mc{J}_i \subset \mc{I}_i \subset \mc{O}_{X}$ are ideal sheaves. Equivalently, 
\[
\F \vert_{U_j} \simeq \oplus^{k_j}_{i=1} \wt{I_i/J_i}
\]
where $J_i \subset I_i \subset \mc{O}_X (U_j)$ are ideals. 
\end{mydef}

It follows from the remarks following Definition \ref{typeamod} that any $T$-sheaf is an object in  $\Coh^{\alpha}(X)$. We denote by $\Coh^T(X)$ the full subcategory of $\Coh^{\alpha}(X)$ whose objects are $T$-sheaves. 

\begin{rmk}

We note that $\Coh^T(X)$ contains all invertible sheaves. Moreover, when $X$ is separated, Theorem 3.3 of \cite{Pir} shows that any vector bundle is a direct sum of line bundles, and so $\Coh^T(X)$ contains all such.  

\end{rmk}

The following is proved just as Lemma \ref{lemma: type alpha affine}:

\begin{lem}\label{lemma: Tsheaf affine}
Let $\F \in \Coh^{\alpha}(X)$. The following conditions are equivalent:
\begin{enumerate}
	\item
$\mathcal{F} \in \Coh^T(X)$.
	\item 
For every affine open neighborhood $U \subset X$, $\mathcal{F} \simeq \oplus^k_{i=1} \wt{I_i/J_i} $ where $J_i \subset I_i \subset \mathcal{O}_X(U)$ are ideals. 	
	\item
For every $x \in X$, $\mathcal{F}_x \simeq \oplus^k_{i=1} \mc{I}_{i,x}/ \mc{J}_{i,x} $ where $\mc{J}_i \subset \mc{I}_i \subset \mathcal{O}_X$ are coherent ideal sheaves. 
\end{enumerate}
\end{lem}

We have the following analogue of Proposition \ref{coh_pa_prop}:

\begin{prop} \label{pa_t}
	Let $X$ be a monoid scheme. 
	\begin{enumerate}
	\item $\Coh^T(X)$ is a full proto-abelian subcategory of  $\Coh^{\alpha}(X)$.
	\item If $Z \subset X$ is a closed subset, the category $\Coh^T(X)_Z$ of $T$-sheaves supported in $Z$ is a full proto-abelian, extension-closed subcategory of  $\Coh^T(X)$.
	\item If $\mc{I} \subset \mc{O}_{X}$ is a quasicoherent sheaf of ideals, then the category $\Coh^T(X)_{\mc{I}}$ is a full proto-abelian subcategory of $\Coh^T(X)$.
%\item If $\sigma \in \Delta$ is a maximal cone, and $X = \spec S_{\sigma}$, then the functor  $$ \wt{}: \on{Mod}^{gr, fg,  \alpha}_{S_{\sigma}} \rightarrow \Coh^T(X_{\sigma})$$ is an exact equivalence.
	\end{enumerate}
\end{prop}
\begin{proof}
$(1)$: it follows from Remark \ref{pe_structure_Amod} that any sub-sheaf or quotient of a $T$-sheaf is a $T$-sheaf, proving the claim. For $(2)$ and $(3)$, one may apply the same argument as in Proposition \ref{coh_pe_thm}.
\end{proof}

\subsection{Local picture - $T$-sheaves on $\mathbb{A}^n$ } 

Let $A = \Pn$. By Lemma \ref{lemma: Tsheaf affine}, every T-sheaf on $\mathbb{A}^n = \mspec(A)$ is of the form $\oplus^k_{i=1} \wt{I_i/J_i}$ where $J_i \subset I_i \subset A$ is a chain of ideals. $A \setminus \{0_A\}$ may be given the structure of a poset, where $a \geq b$ if $a \vert b$. By taking the vector of exponents of each of the variables $x_i$, this poset may be identified with $(\mathbb{Z}_{\geq 0}^n, \geq')$ where $u \geq' v$ if and only if  $v= u+w$ for some $w$. We will use this identification throughout. 

\begin{mydef}
A \emph{generalized skew shape} $\T$ in $A$ is a convex sub-poset $\T \subset (A\setminus \{0_A\}, \geq)$.  We consider two such to be equivalent if they are isomorphic as posets. 
We say that $\T$ is \emph{connected} if it is connected as a poset (i.e. if the corresponding Hasse diagram is connected). 
\end{mydef}

After deleting the zero element, to each chain of ideals $J \subset I \subset A$ corresponds a chain of convex subposets $P_J \subset P_I \subset A \setminus  0_A $, with $P_J$ downward closed in $P_I$. To the quotient $I/J$ corresponds the complement $P_I \setminus P_J$, which is also convex. Thus, to each quotient of ideals $I/J$ we attach a generalized skew shape denoted $\T_{I/J}$. 

\begin{rmk}
Note that if $a \in A \in \setminus \{0_A\} $, then as $A$-modules, $ I/J \simeq aI /aJ$, with isomorphism given by multiplication by $a$. On the poset side, the posets $P_{aJ} \subset P_{aI}$ are obtained by translating $P_J \subset P_I$ by the exponent vector of $a$, and so $\T_{aI/aJ}$ and $\T_{I/J}$ are isomorphic as posets. 
\end{rmk}

The terminology \emph{skew shape} stems from the fact that when $n=2$, i.e. $A = \fun \langle x_1, x_2 \rangle $, the equivalence classes of finite $\T$'s correspond to skew Young diagrams, as seen in the following example:

\begin{myeg}[\cite{Sz0}] \label{example_a}
Let $n=2$, $J=(x^4_1, x^2_1 x_2, x_1 x^2_2, x^3_2 )  \subset I = (x_1, x_2) \subset A = \fun \langle x_1, x_2 \rangle $. Then $I/J = \{x_1, x^2_1, x^3_1, x_2, x_1 x_2, x^2_2 \} $, and $\T_{I/J}$ corresponds to the connected skew Young diagram:  

%\vspace{0.2in}
\begin{center}
\Ylinethick{1pt}
\gyoung(;,;;,:;;;)
\end{center}
\end{myeg}
\vspace{0.1cm}

Here, the missing box in the lower left hand corner corresponds to $1$, the top left box to $x^2_2$, and the bottom right to $x^3_1$.

%Let $\rho_n \subset \mathbb{R}^n$ be the cone generated by the standard basis vectors $\{ e_1, \cdots, e_n \}$, and take $\Delta = \rho_n$. Then $S_{\rho_n} = \Pn$, and  $X_{\Delta} = X_{\rho_n} = \mathbb{A}^n$, with its standard toric structure. In this section, we describe the $T$-sheaves on $\mathbb{A}^n$. This yields a local description of $T$-sheaves on any smooth toric monoid scheme, since any such comes from a fan $\Delta$ whose maximal cones are isomorphic to $\rho_n$. The results of this section are a modest generalization of those obtained in \cite{Sz0} (which correspond to $T$-sheaves supported at the origin), and several examples are taken from that paper. 

\begin{rmk}
The poset $\T_{I/J}$ is connected iff the corresponding skew Young diagram is connected - i.e. if any two boxes in the diagram may be joined by a path staying within the diagram. 
\end{rmk}

The following is straightforward:

\begin{prop} \label{ssm_prop}
Let $J \subset I \subset  A \simeq \fun\langle x_1,\dots,x_n\rangle$ be a chain of ideals, and $\T_{I/J} \subset (A\setminus \{0_A\}, \geq) $ the generalized skew shape corresponding to $I/J$, then:
\begin{enumerate}
\item $(\T, \geq)$ has a finite number of maximal elements, which correspond to the unique minimal set of generators of $I/J$ as an $A$-module.
\item $A$-submodules of $I/J$ correspond to downward closed subsets in $\T_{I/J}$ 
\item $I/J$ is indecomposable as an $A$-module iff $\T_{I/J}$ is connected. 
\end{enumerate}
\end{prop}

\begin{rmk}
Since an $A$-module of the form $I/J$ may be naturally identified with a subset of $A$, it admits a grading by $A$. This property was used in \cite{Sz0} to define T-sheaves supported at $0$ in $\mathbb{A}^n$ and their Hall algebra. Definition \ref{Tsheaf} above seems to us more natural and easier to work with than that given in \cite{Sz0}. 
\end{rmk}

As a converse to part $(3)$ of Prop. \ref{ssm_prop}, to a connected generalized skew shape $\T \subset (A\setminus \{0_A\}, \leq)$ one can attach an indecomposable $A$-module $M_{\T}$, with underlying set $\T \sqcup \{ 0 \}$, and $A$-module structure given by: for $a \in A$ and $t \in {\T}$,
\begin{align*}
a \cdot t &= \begin{cases} at, & \mbox{if } at \in \T \\ 0 & \mbox{otherwise}. \end{cases} 
\end{align*}
We have $M_{\T} \simeq I/J$ where $I$ is the ideal generated by $\T$, and $J= I \setminus \T$. 

\begin{myeg} Let $\T$ as in Example \ref{example_a}. In terms of the skew Young diagram,  $x_1$ (resp.~$x_2$) acts on $M_{\T}$ by moving one box to the right (resp.~one box up) until reaching the edge of the diagram, and $0$ after that. A minimal set of generators for $M_{\T}$ is indicated by the black dots:

\begin{center}
\Ylinethick{1pt}
\gyoung(;,\bullet;,:;\bullet;;)
\end{center}
\end{myeg}

\begin{myeg}[\cite{Sz0}]  Let $A = \fun \langle x_1, x_2 \rangle$. Let $\S$ be the generalized skew shape below, with $\T \subset \S$ the downward closed subset consisting of the boxes containing t's.

\begin{center}
\Ylinethick{1pt}
\gyoung(;t,;;;t,:;;t;t,::;;;t)
\end{center}
\Ylinethick{1pt}
We have the following decomposition of $M_{\T}$
\[
M_{\T} = \gyoung(;) \oplus \gyoung(;) \oplus \gyoung(;,;;)
\]
and the quotient
\[
M_{\S} / M_{\T} = \gyoung(;;,:;) \oplus \gyoung(;;)
\]
One can see that $\textrm{Ann}_A(M_{\T})=(x_1^2, x_1 x_2, x_2^2)$, and hence $\textrm{supp}(M_{\T})=V(x_1,x_2)=0 \subset \mathbb{A}^2$. 
\end{myeg}

\begin{myeg}
Let $A = \fun \langle x_1, x_2 \rangle$, and consider the following subset $\T \subset (A\setminus \{0_A\}, \leq)$ 
\[
\T:=\{ x^{n_1}_1 x^{n_2}_2  \mid n_1 \in \mathbb{N}, 0 \leq n_2 < 2 \} \cup \{ x^{n_1}_1 x^{n_2}_2  \mid n_2 \in \mathbb{N}, 0 \leq n_1 < 2  \}.
\]
$\T$ corresponds to the infinite shape

\begin{center}
\Ylinethick{1pt}
\gyoung(:\vdots:\vdots,;;,;;,;;,;;;;;:\cdots,;;;;;:\cdots)
\end{center}
\Ylinethick{1pt}
\vspace{0.1cm}

We have $M_{\T} \simeq A/(x^2_1 x^2_2)$, and $\textrm{Ann}_A(M_{\T})=(x^2_1 x^2_2)$. Hence $\textrm{supp}(M_{\T})=V(x_1x_2)\subset \mathbb{A}^2$ is the union of the $x_1$ and $x_2$-axes. 

\end{myeg}

\begin{myeg} \label{tf_example}
Let $A = \fun \langle x_1, x_2 \rangle$ and $\T \subset (A\setminus \{0_A\}, \leq)$ be the ideal $\T:=(x^4_1, x_1 x_2, x^3_2 )$. $\T$ corresponds to the infinite shape

\begin{center}
\Ylinethick{1pt}
\gyoung(:\vdots:\vdots:\vdots:\vdots:\vdots:\vdots:\iddots,;;;;;;:\cdots,;;;;;;:\cdots,:;;;;;:\cdots,:;;;;;:\cdots,::::;;:\cdots)
\end{center}
\Ylinethick{1pt}
\vspace{0.1cm}
Viewing $\T$ as an ideal in $A$, we have $M_{\T} \simeq {\T}$.  One observes that $\textrm{Ann}_A(M_{\T})=\{0_A\}$, and hence $\textrm{supp}(M_{\T})=\mathbb{A}^2$. 
\end{myeg}

%{\color{red} describe the support of these modules}. 

\begin{rmk}
If $\T$ is a finite skew shape, then for sufficiently large $k$, $\mathfrak{m}^k \subset \on{Ann}(M_{\T})$, where $\mathfrak{m}=( x_1, \cdots, x_n) $ is the maximal ideal of $\A$. Thus $\textrm{supp}(M_{\T}) = 0 \subset \mathbb{A}^n$. 
\end{rmk}

The following summarizes our discussion of T-sheaves on $\mathbb{A}^n$:

\begin{prop} \label{skew_shape_prop}  
Every $\F \in \Coh(\mathbb{A}^n)^T$ can be written as a finite direct sum 
\begin{equation} \label{decomposition}  \F \simeq \F_1 \oplus \cdots \oplus \F_m ,
\end{equation} 
where $\F_{i} \simeq \wt{M_{\T_i}}, i=1 \cdots m$, for connected generalized skew shapes $\T_i \subset \Pn \setminus \{0\}$. The decomposition \eqref{decomposition} is unique up to permutation of factors. 
\end{prop}

\subsection{The Hall algebra of $T$-sheaves}

Let $\Delta$ be the fan of a projective toric variety. Since $\Coh^T(X_{\Delta})$ is a full  proto-abelian sub-category of $\Coh^{\alpha}(X_{\Delta})$, and the latter is finitary by Theorem \ref{finitary_alpha}, it follows that so is the former. We thus have the following analogue of Theorem \ref{finitary_alpha}:

\begin{mythm} \label{finitary_T}
Let $\Delta$ be the fan of a projective toric variety, and $(X_{\Delta}, \mc{O}_X)$ the corresponding monoid scheme. Then
\begin{enumerate}
\item $\Coh^T(X_{\Delta})$ is a finitary proto-abelian category.  
\item If $Z \subset X_{\Delta}$ is a closed subset, the category $\Coh^{T}(X_{\Delta})_Z$ of $T$-sheaves supported in $Z$ is a finitary proto-abelian category. 
\item If $\mc{I} \subset \mc{O}_X$ is a quasicoherent sheaf of ideals, the category $\Coh^{T}(X_{\Delta})_{\mc{I}}$ is a finitary proto-abelian category. 
\item The Hall algebra $H_{\Coh^T(X_{\Delta})}$ is isomorphic, as  a Hopf algebra, to an enveloping algebra $U(\mathfrak{n}^{T}_X)$, where $\mathfrak{n}^{T}_X$ has the indecomposable sheaves in $\Coh^{T}(X_{\Delta})$ as a basis. 
\end{enumerate}
If $Z \subset X_{\Delta}$ is a closed subset, or $\mc{I} \subset \mc{O}_{X_{\Delta}}$ is a quasicoherent sheaf of ideals, we may also define the Hall algebras of $\Coh^T(X_{\Delta})_Z$ and $\Coh^T(X_{\Delta})_{\mc{I}}$. 
\end{mythm}

To simplify notation, we will write
\[
H^{T}_X := H_{\Coh^{T}(X_{\Delta})}, \; \; H^T(X)_Z := H_{\Coh^{T}(X_{\Delta})_Z}, \; \; H^T(X)_{\mc{I}} := H_{\Coh^{T}(X_{\Delta})_{\mc{I}}}
\]

\section{Examples and relations with known Lie algebras}

In this section we attempt to explicitly describe the Hall algebra of $\Coh^T(X)$, and in some cases categories $\Coh^T(X)_Z, \Coh^T(X)_{\mc{I}}$, when $X$ is a toric monoid scheme such that $\dim(X_k) > 1$. We want to show that in contrast to the setting of projective schemes over $\mathbb{F}_q$, computations for higher-dimensional examples in the monoid setting are not only possible, but relatively straightforward. As our Hall algebras are always isomorphic to enveloping algebras, it suffices to describe the corresponding Hall Lie algebra. 

\subsection{Point sheaves on $\mathbb{A}^n$ and skew shapes} \label{skew_shapes}

The category $\Coh^T(\mathbb{A}^n)_0$ of $T$-sheaves supported at the origin in $\mathbb{A}^n$ may be identified with the category $\Coh^T(\mathbb{P}^n)_x$, where $x \in \mathbb{P}^n$ is a closed point, since $x$ has a (unique) open affine neighborhood isomorphic to $\mathbb{A}^n$. By Theorem \ref{finitary_T},  $\Coh^T(\mathbb{A}^n)_0$ is finitary (though note that $\Coh^T(\mathbb{A}^n)$ is not !), and we consider the corresponding Hall algebra $H^T(\mathbb{A}^n)_0$, first investigated by the second author in \cite{Sz0}, where we refer the interested reader for details. The examples in this section are taken from that paper. 

By part $(4)$ of Theorem \ref{finitary_T} and Proposition \ref{skew_shape_prop},  $H^T(\mathbb{A}^n)_0 \simeq \mathbf{U}(\mathfrak{sk}_n)$, where $\mathfrak{sk}_n$ is the Lie algebra with basis delta-functions supported on \emph{finite} connected generalized skew shapes. 

The symmetric group on $S_n$ acts on $\Pn$ by $\sigma\cdot x_i=x_{\sigma(i)}$ for $\sigma \in S_n$, and so on $\mathbb{A}^n$, fixing the origin. This yields an action of $S_n$ on $\Coh^T(\mathbb{A}^n)_0$ by exact auto-equivalences, and hence on  $H^T(\mathbb{A}^n)_0$ and $\mathfrak{sk}_n$. To summarize:

\begin{mythm}[\cite{Sz0}] \label{skew_Hall_theorem}
The Hall algebra $H^T(\mathbb{A}^n)_0$ is isomorphic to the enveloping algebra $\mathbf{U}(\mathfrak{sk}_n)$. The Lie algebra $\mathfrak{sk}_n$ may be identified with
\[
\mathfrak{sk}_n = \{ \delta_{[M_{\S}]} \mid \S \textrm{ a finite connected skew shape in } \Pn \setminus \{0\} \textrm{ up to translation } \}
\]
with Lie bracket 
\[
[\delta_{[M_{\S}]}, \delta_{[M_{\T}]}] = \delta_{[M_{\S}]} \bullet \delta_{[M_{\T}]} - \delta_{[M_{\T}]} \bullet \delta_{[M_{\S}]}
\]
The symmetric group $S_n$ acts on $H^T(\mathbb{A}^n)_0$ (resp.~$\mathfrak{sk}_n$) by Hopf (resp.~Lie) algebra automorphisms. 
\end{mythm}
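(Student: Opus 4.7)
The plan is to assemble three ingredients already developed in the paper: the Hall-algebra / Milnor--Moore machinery of Proposition \ref{Hall_theorem}, the classification of graded indecomposable modules in Theorem \ref{theorem: sknewshape}, and a functoriality argument for the $S_n$-action. First I would identify $\Coh^T(\mathbb{P}^n)_0$ with the category of finite objects in $\on{Mod}^{gr,fg,\alpha}_A$ for $A = \Pn$. A $T$-sheaf $\F$ supported at the origin of $\mathbb{P}^n$ is determined by its restriction to the affine chart $X_{\rho_n} = \spec A$ around $0$, where it corresponds to $M \in \on{Mod}^{gr,fg,\alpha}_A$ with $\on{supp}(\wt{M}) \subseteq V(\mathfrak{m})$ for $\mathfrak{m} = (x_1,\ldots,x_n)$. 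Finite generation together with annihilation by a power of $\mathfrak{m}$ forces $M$ to be finite as a pointed set. By Proposition \ref{pa_t}(2), $\Coh^T(\mathbb{P}^n)_0$ is proto-abelian, and as a full subcategory of the finitary category $\Coh^T(\mathbb{P}^n)$ (Theorem \ref{finitary_T}) it is itself finitary.

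Next I would verify the hypotheses preceding Proposition \ref{Hall_theorem}. Conditions (1) and (2) are immediate from the proto-abelian structure of $\Amod$ restricted to the graded, \typea subcategory. Condition (3) --- that every admissible submodule of $M \oplus N$ splits as $M' \oplus N'$ --- is property (4) of $\Amod$ recorded in Section \ref{Amod}, and it is inherited by graded, \typea, finite submodules since taking intersections with the summands preserves all three structures. Proposition \ref{Hall_theorem} then produces an isomorphism $H^T_0[n] \simeq \mathbf{U}(\mathfrak{sk}_n)$ with $\mathfrak{sk}_n$ spanned by $\delta$-functions on indecomposable objects, and with Lie bracket the commutator of the Hall product (by the Milnor--Moore description of the primitives of a graded connected co-commutative Hopf algebra). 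Theorem \ref{theorem: sknewshape} then identifies the indecomposable objects with equivalence classes of finite connected skew shapes in $\Pn \setminus \{0\}$ modulo translation, yielding the explicit basis for $\mathfrak{sk}_n$ claimed in the statement.

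Finally, for the $S_n$-action: a permutation $\sigma \in S_n$ induces a monoid automorphism of $A$ sending $x_i \mapsto x_{\sigma(i)}$, and hence an endofunctor $M \mapsto M^\sigma$ on $\Amod$ with the same underlying pointed set but twisted action $a \cdot_\sigma m := \sigma(a)\cdot m$. I would check that this functor preserves finiteness and finite generation (the underlying set is unchanged), the \typea condition (the endomorphism of $M^\sigma$ given by $a$ coincides with the endomorphism of $M$ given by $\sigma(a)$), and the property of admitting a grading (precompose the grading with $\sigma^{-1}$ acting on $\mathbb{Z}^n_{\geq 0}$). It is clearly exact, preserves coproducts, and sends indecomposables to indecomposables. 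An exact auto-equivalence of a finitary proto-exact category satisfying conditions (1)--(3) acts on the Hall algebra by pushforward $\delta_{[M]} \mapsto \delta_{[M^\sigma]}$; by the functoriality of the constructions in Section \ref{subsection: Hall algebras of finitary proto-exact} this is a Hopf algebra automorphism, and its restriction to primitives is the desired Lie algebra action on $\mathfrak{sk}_n$.

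The main obstacle, such as it is, is purely bookkeeping: checking that the $S_n$-twist interacts correctly with the grading (so that $M^\sigma$ still admits a grading) and with the \typea condition. Everything else reduces to citations of earlier propositions and of \cite{Sz0} for the skew-shape classification.
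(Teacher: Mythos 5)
Your proposal is correct and follows essentially the same route as the paper: identify $\Coh^T(\mathbb{P}^n)_0$ with finite objects of $\on{Mod}^{gr,fg,\alpha}_{\Pn}$, invoke Proposition \ref{Hall_theorem} for the enveloping-algebra structure, cite Theorem \ref{theorem: sknewshape} for the skew-shape basis of the indecomposables, and define the $S_n$-action by twisting the module structure through the permutation of the variables. The paper leaves the verifications you spell out (conditions (1)--(3), preservation of the grading and the \typea condition under the twist) as implicit, so your extra bookkeeping only makes the argument more complete.
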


We recall from Section \ref{subsection: Hall algebras of finitary proto-exact} that the product in $H^T(\mathbb{A}^n)_0$ is
\begin{equation} \label{skew_mult}
\delta_{[M_{\S}]} \bullet \delta_{[M_{\T}]} = \sum_{ \mc{R} \textrm{ a skew shape } } \mathbf{P}^{\mc{R}}_{\S,\T} \delta_{[M_{\mc{R}}]} 
\end{equation}
where 
\[
\mathbf{P}^{\mc{R}}_{\S,\T}  := \#  \{ M_\mc{L} \subset M_\mc{R} \mid  M_\mc{L} \simeq M_\mc{T}, M_\mc{R} / M_\mc{L} \simeq M_\mc{S} \}.
\]
Note that the skew shapes $\mc{R}$ being summed over need not be connected. In fact, when $\S$ and $\T$ are connected, the product (\ref{skew_mult}) will contain precisely one disconnected skew shape $\S \sqcup \T$ corresponding to the split extension $M_{\S} \oplus M_{\T}$. 

\begin{myeg}[\cite{Sz0}] 
Let $n=2$. By abuse of notation,  we denote the delta-function $\delta_{[M_{\S}]} \in \mathfrak{sk}_2$ simply by $\S$. Let
\Ylinethick{1pt}
\[
\S = \gyoung(;,;;) \; \;  \; \; \;\T = \gyoung(;;)
\]
Then, we have
\[
\S \bullet \T = \gyoung(;s,;s;s;t;t) \; + \; \gyoung(;t;t,:;s,:;s;s) \; + \; \gyoung(;s,;s;s) \oplus \gyoung(;t;t) 
\]
\[
\T \bullet \S = \gyoung(;s,;s;s,;t;t) \; + \; \gyoung(;s,;s;s,:;t;t) \; + \; \gyoung(;t;t;s,::;s;s) \; + \; \gyoung(;s,;s;s) \oplus \gyoung(;t;t) 
\]
and
\[
[\S,\T] =  \gyoung(;s,;s;s;t;t) \; + \; \gyoung(;t;t,:;s,:;s;s) \; - \; \gyoung(;s,;s;s,;t;t) \; - \; \gyoung(;s,;s;s,:;t;t) \; - \; \gyoung(;t;t;s,::;s;s)
\]
where for each skew shape we have indicated which boxes correspond to $\S$ and $\T$.
\end{myeg}

As the above example illustrates, for connected skew shapes $\S,\T$, the multiplication $\S \bullet \T$ in the Hall algebra involves all ways of "stacking" the shape $\T$ onto that of $\S$ to obtain another skew shape, as well as one disconnected skew shape (which may be drawn in several ways). 

\begin{rmk}
One may easily observe that the structure constants of the Lie algebra $\mathfrak{sk}_n$ in the basis of skew shapes are all $-1$, $0$, or $1$. 
\end{rmk}

The co-multiplication \eqref{coprod-def} on $H^T(\mathbb{A}^n)_0$ can be explicitly described as follows. Suppose that $\S=\S_1 \sqcup \S_2 \cdots \sqcup \S_k$ with $\S_i$ connected for each $i=1,\dots, k$. Then 
\begin{equation}
\Delta(\delta_{[M_{\S}]}) = \sum_{I \subset \{ 1, \cdots, k \} } \delta_{[M_{\S_{I}}]} \otimes \delta_{[M_{\S_{I^c}}]}
\end{equation}
where the summation is over all subsets $I \subset \{ 1, \cdots, k \}$, $I^c =  \{ 1, \cdots, k \} \backslash I $, and $$ M_{\S_I} := \bigoplus_{i \in I} M_{\S_i} .$$

\begin{myeg}[\cite{Sz0}]
Adopting the same conventions as in the previous example, we have
\Ylinethick{1pt}
\[
\Delta \left(  \gyoung(;,;;) \oplus \gyoung(;;) \right) 
\]
\[
= \left( \gyoung(;,;;) \oplus \gyoung(;;) \right) \otimes 1 + 1 \otimes \left( \gyoung(;,;;) \oplus \gyoung(;;) \right) + \gyoung(;,;;) \otimes \gyoung(;;) + \gyoung(;;) \otimes  \gyoung(;,;;)
\]
\end{myeg}

\medskip

\subsection{Point sheaves on the second infinitesimal neighborhood of the origin in $\mathbb{A}^2$}

Let $\m = (x_1, x_2)  \subset \Pt$ be the maximal ideal of the origin in $\mathbb{A}^2$. In this section we consider the category $\Coh^T(\mathbb{A}^2)_{\m^2}$ of coherent $T$-sheaves  scheme-theoretically supported on the second formal neighborhood of the origin. Embedding $\mathbb{A}^2 \subset \mathbb{P}^2$,  we may equivalently view these as sheaves supported on the second infinitesimal neighborhood of one of the three closed points.  In terms of modules, it is the category of \typea $\Pt$-modules admitting a grading which are annihilated by $\m^2 = (x^2_1, x_1 x_2, x^2_x)$. These in turn correspond to two-dimensional skew shapes not containing any of the following three as sub-diagrams:
\Ylinethick{1pt}
\[
\gyoung(;;;) \; \; \; \;  \gyoung(;,;,;) \; \; \; \; \gyoung(;;,;;)
\]
The only connected skew shapes having this property are easily seen to be of four types (depending on whether the first/last part consists of one or two boxes):
\[
\gyoung(;,;;,:;;,:::\vdots,:::;;,::::;;,:::A) \; \; \; \; \gyoung(;;,:;;,:::\vdots,:::;;,::::;;,:::::;,:::B) \; \; \; \; \gyoung(;,;;,:;;,:::\vdots,:::;;,::::;;,:::::;,:::C) \; \; \; \;  \gyoung(;;,:;;,:::\vdots,:::;;,::::;;,:::::;;,:::D)
\]
We denote by $\mc{A}_n, \mc{B}_n, \mc{C}_n, \mc{D}_n$ the corresponding diagram containing $n$ boxes, and have:
\begin{align*}
\mc{A}_n, \quad & n=2k+1, k \geq 1 \\
\mc{B}_n, \quad& n=2k+1, k \geq 1 \\
\mc{C}_n, \quad &  n=2l, l\geq 1 \\
\mc{D}_n, \quad &  n=2l, l \geq 1 
\end{align*}
We have in addition the generator $\mc{H}$ corresponding to the diagram with a single box, which is special. 

 When computing products/commutators in the Hall algebra of  $\Coh^T(\mathbb{A}^2)_{\m^2}$, we keep only those skew shapes annihilated by $\m^2$. We have for instance
\begin{align*}
 \mc{C}_n \bullet \mc{A}_m & = \mc{A}_{n+m} + \mc{C}_n \oplus \mc{A}_m \\
 \mc{A}_m \bullet \mc{C}_n &= \mc{C}_n \oplus \mc{A}_m
\end{align*}
which yields 
\[
[\mc{C}_n, \mc{A}_m ] = \mc{A}_{n+m}
\]
Similarly, we can compute the other commutators:
\begin{align*}
[\mc{C}_n, \mc{D}_m] &= 0 \\
[\mc{D}_n, \mc{A}_m] &= \mc{A}_{m+n} \\
[\mc{C}_n, \mc{B}_m] &= - \mc{B}_{m+n} \\
[ \mc{D}_n, \mc{B}_m] &= -\mc{B}_{m+n} \\
[\mc{A}_n, \mc{B}_m] &= - \mc{D}_{m+n} - \mc{C}_{m+n}  \\
[\mc{H}, \mc{A}_n] & = - [\mc{H}, \mc{B}_n] = \mc{D}_{n+1} + \mc{C}_{n+1} \\
[\mc{H}, \mc{C}_n] &= [\mc{H}, \mc{D}_n] = \mc{B}_{n+1} - \mc{A}_{n+1} \\
\end{align*}

The Lie algebra $\mathfrak{n}$ spanned by $\mc{H}, \mc{A}_{2j+1}, \mc{B}_{2k+1}, \mc{C}_{2r}, \mc{D}_{2s}, j \geq 1, k \geq 1, r \geq 2, s \geq 1$ is isomorphic to a Lie subalgebra $\mathfrak{k} \subset \mathfrak{gl}_2 [t]$:
\[
\begin{pmatrix} d(t) & a(t) \\ b(t)& c(t) \\ \end{pmatrix}
\]
where $a(t), b(t)$ are odd polynomials, with $\deg(a(t)) \geq 3, \deg(b(t)) \geq 3$, and $c(t), d(t)$ are even polynomials with $\deg(c(t)) \geq 2, \deg(d(t)) \geq 2$, via the isomorphism $\rho: \mathfrak{n} \rightarrow \mathfrak{k}$ given by:
\begin{align*}
\rho(\mc{H}) &= E_{21} \otimes t - E_{12} \otimes t \\
\rho(\mc{A}_n) &= - E_{12} \otimes t^n \\
\rho(\mc{B}_n) &= E_{21} \otimes t^n \\
\rho(\mc{C}_n) &= - E_{22} \otimes t^n \\
\rho(\mc{D}_n) &= E_{11} \otimes t^n
\end{align*}
where $E_{ij}$ denotes the $2 \times 2$ matrix with a $1$ in row $i$, column $j$, and $0$'s everywhere else. 

\subsection{Truncations of point sheaves on $\mathbb{A}^2$ by height}

We define the \emph{height} (or number of parts) of  a connected two-dimensional skew shape $\S$ as the number of horizontal segments making up the shape, and denote it by $ht(\S)$. For example, for 
\begin{center}
\[
\Ylinethick{1pt}
\S = \gyoung(;;,:;;;) \; \;
\T= \gyoung(;;,;;;;,:;;;;)
\]
\end{center}
we have $ht(\S) = 2$ and $ht(\T) = 3$ respectively. The height of a general skew shape is defined as the maximum of the heights of the connected components, and hence $ht(\S \oplus \T) = 3$. We also set $ht(M_{\S}) := ht(\S)$. Let $\Coh^T(\mathbb{A}^2)_{\leq m}$ denote the subcategory of $\Coh^T(\mathbb{A}^2)$ generated by finite modules of height $\leq m$. The following is straightforward:

\begin{prop} \label{height_hall_algebra}
Let $m$ be a positive integer and set $$I_m = \textrm{span} \{ \delta_{[M_{\S}]} \mid ht(\S) > m \} \subset H_0^T[2].$$ Then
\begin{enumerate}
\item $\Coh^T(\mathbb{A}^2)_{\leq m}$ is a proto-abelian subcategory of $\Coh^T(\mathbb{A}^2)$.
\item $I_m$ is a Hopf ideal in $H_0^T [2]$, and if $m < m'$, then $I_{m'} \subset I_m$. 
\item The Hall algebra $\H_{\Coh^T(\mathbb{A}^2)_{\leq m}}$ may be identified with $H_0^T [2] / I_m$
\end{enumerate}
\end{prop}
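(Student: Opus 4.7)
\textit{Proof proposal.} The plan is to derive everything from a single height inequality: for any admissible short exact sequence $V \hookrightarrow K \twoheadrightarrow K/V$ in $\Coh^T(\mathbb{A}^2)$, we have both $ht(V) \leq ht(K)$ and $ht(K/V) \leq ht(K)$. Using the equivalence $\Coh^T(\mathbb{A}^2) \simeq \on{Mod}_{\Pt}^{gr,fg,\alpha}$ from Proposition \ref{pa_t}(4), this becomes a claim about graded modules, which I reformulate via Theorem \ref{theorem: sknewshape} in terms of generalized skew shapes. When $K = M_{\mc{R}}$ with $\mc{R}$ a connected generalized skew shape, convexity of $\mc{R}$ forces each row to be a contiguous horizontal segment, so the height of a connected skew shape equals its number of distinct $y$-coordinates. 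A submodule $V \subset K$ corresponds to an upward-closed sub-poset $\mc{R}' \subset \mc{R}$; any connected component of $\mc{R}'$ (or of $\mc{R} \setminus \mc{R}'$) has its $y$-coordinates drawn from those of $\mc{R}$, so its height is at most $ht(\mc{R})$. For general $K = \bigoplus_i K_i$, property~(4) listed after Proposition \ref{pe_subcat} decomposes $V = \bigoplus_i (V \cap K_i)$ and $K/V = \bigoplus_i K_i/(V \cap K_i)$ along the indecomposable summands, reducing the claim to the connected case.

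Part (1) is then immediate: $\Coh^T(\mathbb{A}^2)_{\leq m}$ is a full subcategory containing the zero sheaf, is closed under admissible subobjects and quotients by the height inequality, and is closed under direct sums since $ht(\S \oplus \T) = \max(ht(\S), ht(\T))$. The proto-abelian structure follows as in Proposition \ref{pa_t}.

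For Part (2), the Hopf ideal axioms follow from the same inequality. A nonzero structure constant $g^K_{\S,N}$ requires an admissible subobject of $K$ isomorphic to $M_N$ with quotient $M_\S$, so $ht(K) \geq \max(ht(\S), ht(N))$; hence if either $\delta_{[M_\S]}$ or $\delta_{[M_N]}$ lies in $I_m$, so does every term of $\delta_{[M_\S]} \bullet \delta_{[M_N]}$. For the coproduct, the explicit formula $\Delta(\delta_{[M_\S]}) = \sum_I \delta_{[M_{\S_I}]} \otimes \delta_{[M_{\S_{I^c}}]}$ over partitions of the connected components of $\S$ shows that if $ht(\S) > m$, some connected component has height $> m$ and lies in either $M_{\S_I}$ or $M_{\S_{I^c}}$, so at least one tensor factor is in $I_m$. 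The counit $\epsilon(\delta_{[M_\S]})$ vanishes for $\S$ non-empty, so $\epsilon(I_m) = 0$. A biideal in a graded connected cocommutative bialgebra is automatically a Hopf ideal, completing (2). The inclusion $I_{m'} \subset I_m$ for $m < m'$ is clear from the definition.

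Part (3) is the identification $H^T_0[2]/I_m \simeq \H_{\Coh^T(\mathbb{A}^2)_{\leq m}}$. Both sides carry canonical bases indexed by iso classes of $T$-sheaves of height $\leq m$. Closure under admissible subobjects and quotients (Part 1) ensures that the count of admissible subobjects defining $g^K_{\S,N}$ agrees whether computed in the subcategory or in the ambient category (for $K$ of height $\leq m$), so the multiplications match; the coproducts match because direct summands preserve the height bound. The main obstacle is really the height inequality and, more specifically, the disconnected case, which is cleanly handled by property~(4); everything else is routine bookkeeping for biideals in graded connected cocommutative Hopf algebras.
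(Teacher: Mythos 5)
Your proof is correct; the paper itself offers no argument here (it labels the proposition ``straightforward''), and what you have written is precisely the intended justification. The one genuinely load-bearing step is the height monotonicity $ht(V),\, ht(K/V) \leq ht(K)$, which you establish correctly: convexity makes each row of a connected shape a single contiguous segment so that height counts distinct $y$-coordinates, upward/downward-closed sub-posets of a convex shape are again convex with $y$-coordinates drawn from those of the ambient shape, and property (4) after Proposition \ref{pe_subcat} reduces the disconnected case to the connected one; the remaining bialgebra bookkeeping (graded biideal implies Hopf ideal in a graded connected bialgebra, and fullness plus closure under subs and quotients making the structure constants agree) is handled correctly.
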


By the last part of Proposition \ref{height_hall_algebra} we may identify $\H_{\Coh(\mathbb{A}^2)^T_{\leq m}} \simeq U(\mathfrak{sk}^{\leq m}_2)$, where the Lie algebra $\mathfrak{sk}^{\leq m}_2$ is spanned by connected two-dimensional skew shapes of height $\leq m$, and in computing products, shapes of height $> m$ are discarded. We have surjective Lie algebra homomorphisms $ \mathfrak{sk}^{\leq m'}_2  \twoheadrightarrow \mathfrak{sk}^{\leq m}_2$ for $m \leq m'$ obtained by discarding the shapes of height $> m$. 

We examine here in detail the case of the Lie algebra $\mathfrak{sk}^{\leq m}_2$, for $m=1, 2$. The skew shapes of height $1$ are simply horizontal strips of length n:
\[
h_n =  \gyoung(;;:\cdots;;;) 
\]
In $U(\mathfrak{sk}^{\leq 1}_2)$, we have $$ h_n \bullet h_m = h_{m+n} + h_m \oplus h_n.$$ $\H_{\Coh(\mathbb{A}^2)^T_{\leq m}} \simeq U(\mathfrak{sk}^{\leq 1}_2) \simeq \Lambda$ by mapping $h_n$ to the $n$th power sum, where $\Lambda$ is the ring of symmetric functions viewed as a Hopf algebra. 
 
The shapes of height $2$ are of the form
\[
Y_{i,r,j} =  \gyoung(;;:\cdots;;;::j,::i:;;;;:\cdots;;) 
\]
where the numbers $i, r, j$ denote respectively:
\begin{itemize}
\item $i$ - the defect in the lower left hand corner (i.e. the number of boxes "missing" in the lower left hand corner),
\item $j$ - the defect in the upper right hand corner,
\item $r$ - the total number of boxes in the diagram. Note that $r=i+j + 2p$, where $p \geq 1$ is the number of vertical segments of length 2.  
\end{itemize}

We have the following brackets in $\mathfrak{sk}^{\leq 2}_2$:
\begin{align*}
[Y_{i, r, j}, Y_{i', s, j'} ] & = \delta_{j, i'} Y_{i, r+s, j'} - \delta_{j', i} Y_{i', r+s, j} \\
[h_n, h_m] &= \sum^{\min(m,n)}_{k=1} Y_{m-k,m+n,n-k} -  \sum^{\min(m,n)}_{k=1} Y_{n-k,m+n,m-k}\\
[h_n, Y_{i,r,j}] &= Y_{i+n, r+n, j} - Y_{i,r+n,j+n} + \theta(i-n) Y_{i-n,r+n,j} - \theta(j-n) Y_{i,r+n, j-n}
\end{align*}
where $\theta(x) = 1$ if $x \geq 0$ and $0 $ otherwise.

\begin{myeg} We have for instance in $\mathfrak{sk}^{\leq 2}_2$:

\[
\left[ \gyoung(;;;;,::;;;), \gyoung(;;,:;;;) \right] = \gyoung(;;;;;;,::;;;;;;) - \gyoung(;;;;;;,:;;;;;;)
\]
\[
\left[ \gyoung(;;), \gyoung(;;;;,::;;;) \right] = \gyoung(;;;;;;,::::;;;) + \gyoung(;;;;,;;;;;) - \gyoung(;;;;,::;;;;;)
\]
\[
\left[ \gyoung(;;), \gyoung(;;;) \right] = \gyoung(;;;,::;;) + \gyoung(;;;,:;;) - \gyoung(;;,:;;;) - \gyoung(;;,;;;)
\]

\end{myeg}

Let $\mathfrak{gl}_{\infty}$ denote the Lie algebra of infinite matrices with rows and columns indexed by $\mathbb{Z}_{\geq 0 }$, and $0$'s except on finitely many diagonals. I.e. for every $M \in \mathfrak{gl}_{\infty} $, there exists $n(M) \in \mathbb{N}$ such that $M_{i,j} = 0$ whenever $|i-j| \geq n(M)$. There is an embedding $$ \phi: \mathfrak{sk}^{\leq 2}_2 \hookrightarrow \mathfrak{gl}_{\infty} [t] $$ defined as follows:
\begin{align*}
\phi(Y_{i,r,j}) &= E_{i,j} \otimes t^r \\
\phi(h_n) & = \sum^{\infty}_{k=0} ( E_{k+n,k} \otimes t^n + E_{k,k+n} \otimes t^n )
\end{align*}

We note the following:

\begin{itemize}
\item $J_2 = span \{ Y_{i,r,j} \}$ is a Lie ideal in $\mathfrak{sk}^{\leq 2}_2$ and generates a Hopf ideal $\wt{J}_2$ in $U(\mathfrak{sk}^{\leq m}_2)$ such that 
\[
U(\mathfrak{sk}^{\leq m}_2) / \wt{J}_2 \simeq \Lambda
\]
\item $span \{ Y_{i, r, i} \}, i \geq 0$ is an infinite-dimensional maximal abelian Lie subalgebra of $\mathfrak{sk}^{\leq 2}_2 $
\end{itemize}

\subsection{The case of $\mathbb{P}^1$}

Referring back to Section \ref{Pone} and Proposition \ref{Pone_alpha}, we see that passing from $\Coh^{\alpha}(\Pone)$ to $\Coh^T(\Pone)$ has the effect of discarding the cyclotomic sheaves $\mc{C}_i$ while keeping $\T_{0,r}, \T_{\infty, s}$ and $\mc{O}(n)$. This has the effect of eliminating the factor $\mathfrak{k}$ from the Hall Lie algebra $\n^{\alpha}_{\Pone}$:

\begin{prop} \label{Pone_T}
$H^{T}_{\Pone}$ is isomorphic as a Hopf algebra to $U(\mathfrak{a})$, where $\mathfrak{a}$ is as in Proposition \ref{Pone_alpha}. 
\end{prop}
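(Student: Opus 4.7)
The plan is to identify $\mathfrak{n}^T_{\Pone}$ with the Lie subalgebra $\mathfrak{a} \subset \mathfrak{n}^{\alpha}_{\Pone}$ of Proposition~\ref{Pone_alpha}. By Theorem~\ref{finitary_T}, $H^T_{\Pone} \simeq U(\mathfrak{n}^T_{\Pone})$ as Hopf algebras, with $\mathfrak{n}^T_{\Pone}$ having basis the indecomposables of $\Coh^T(\Pone)$, so the task reduces to showing these indecomposables are exactly $\mc{O}(n), \mc{T}_{0,m}, \mc{T}_{\infty,m}$ and that the bracket matches.

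First, I would verify that $\mc{O}(n)$, $\mc{T}_{0,m}$, $\mc{T}_{\infty,m}$ all lie in $\Coh^T(\Pone)$. Using the descriptions in Example~\ref{P1sheaves} and Proposition~\ref{pa_t}(4), it suffices to show that on each maximal affine chart $U_0 = \spec\langle t\rangle$ and $U_\infty = \spec\langle t^{-1}\rangle$, the restriction is a graded module over the appropriate monoid. For $\mc{O}(n)$, both restrictions are free of rank one and carry their tautological grading; for $\mc{T}_{0,m}$, the restriction to $U_0$ is $\langle t\rangle/(t^m)$ with its degree grading (and the restriction to $U_\infty$ is $0$), and symmetrically for $\mc{T}_{\infty,m}$.

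Next I would exclude the cyclotomic sheaves $\mc{C}_l$. By the description in Section~\ref{Pone}, the restriction of $\mc{C}_l$ to $U_0$ is the module whose directed graph (Example~\ref{alphaA1}, Figure 5) is a pure cycle of length $l$: elements $x_0,\dots,x_{l-1}$ with $t \cdot x_i = x_{i+1 \bmod l} \neq 0$. A grading by the semigroup $\langle t\rangle \setminus\{0\} \simeq \mathbb Z_{\geq 0}$ would force $\deg(x_{i+1 \bmod l}) = \deg(x_i)+1$, whence $\deg(x_0) = \deg(x_0) + l$, which is impossible. Combining with the observation that if $\F \in \Coh^T(\Pone)$ splits as $\F' \oplus \F''$ in $\Coh^\alpha(\Pone)$ then each summand is automatically a $T$-sheaf (since property $(4)$ for $\Amod$-modules lets each graded component of the local model split through the direct sum, by Proposition~\ref{graded_properties}(2)), the indecomposables in $\Coh^T(\Pone)$ are precisely the $T$-sheaves among the indecomposables of $\Coh^\alpha(\Pone)$ listed in Section~\ref{Pone}, i.e. the basis of $\mathfrak{a}$.

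Finally, I would match the Lie brackets. Since $\Coh^T(\Pone)$ is a full subcategory of $\Coh^\alpha(\Pone)$, any admissible short exact sequence $\ses{Y}{Z}{X}$ in $\Coh^T$ is one in $\Coh^\alpha$ with the same equivalence class, so the structure constants $g^Z_{X,Y}$ agree whenever all three sheaves are $T$-sheaves. Conversely, inspecting the nontrivial Hall products in $H^\alpha_{\Pone}$ recalled in Proposition~\ref{Pone_alpha}, the only non-split extensions among the basis elements $\mc{O}(n), \mc{T}_{x,m}$ have middle term $\mc{O}(n+m)$ (arising from the exact sequences $\mc{O}(n-k-r) \hookrightarrow \mc{O}(n) \twoheadrightarrow \mc{T}_{0,k} \oplus \mc{T}_{\infty,r}$ of Example~\ref{P1sheaves}), which is itself a $T$-sheaf; no extension whose middle term is a $\mc{C}_l$ occurs among these. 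The main obstacle is precisely ensuring that no such $\mc{C}_l$-valued extension creeps in, and this is handled by noting that each $\mc{C}_l$ is $\mc{O}_{\Pone}$-torsion yet fits only into exact sequences whose endpoints are also cyclotomic or have cyclotomic constituents (incompatible with pairs of line bundles or point-torsion sheaves). Thus $\mathfrak{n}^T_{\Pone} \simeq \mathfrak{a}$ as Lie algebras, and applying $U(-)$ yields the claimed isomorphism of Hopf algebras.
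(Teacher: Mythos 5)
Your proposal is correct and follows essentially the same route as the paper, which disposes of the proposition in one remark: passing from $\Coh^{\alpha}(\Pone)$ to $\Coh^T(\Pone)$ discards exactly the cyclotomic sheaves $\mc{C}_l$ (they admit no $\mathbb{Z}_{\geq 0}$-grading, cf.\ Example \ref{T_sheaves_A1}), which eliminates the central factor $\mathfrak{k}$ from $\n^{\alpha}_{\Pone} \simeq \mathfrak{a}\oplus\mathfrak{k}$. You merely supply the details the paper leaves implicit (that $\mc{O}(n), \mc{T}_{0,m}, \mc{T}_{\infty,m}$ are $T$-sheaves, that subobjects and quotients of $T$-sheaves remain $T$-sheaves so the structure constants restrict, and that $[\mathfrak{a},\mathfrak{a}]\subseteq\mathfrak{a}$), all of which check out.
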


\subsection{The case of $\mathbb{P}^2$}

In this section we take up the example of $\H^T_{\mathbb{P}^2}$. We view $\mathbb{P}^2$ as equipped with the monoid scheme structure from Example \ref{P2}, and recall that it is covered by three affine open subsets isomorphic to $\mathbb{A}^2$:
\begin{enumerate}
	\item 
	$X_{\sigma_0}=\spec ( S_{\sigma_0} = \langle x_2, x_1 \rangle) \simeq \mathbb{A}^2$, 
	\item 
	$X_{\sigma_1}=\spec ( S_{\sigma_1} = \langle x_1^{-1}, x_1^{-1}x_2 \rangle) \simeq \mathbb{A}^2$, 
	\item 
$X_{\sigma_2}=\spec ( S_{\sigma_2} = \langle x_1x_2^{-1}, x_2^{-1}\rangle) \simeq \mathbb{A}^2$. 
\end{enumerate}

The only closed, reduced, and irreducible monoid subschemes of $\mathbb{P}^2$ are the three closed points $p_0, p_1, p_2$ corresponding to the origins of the each $\mathbb{A}^2$, and three $\mathbb{P}^1$'s $l_{01}, l_{02}, l_{12}$ joining these. We may visualize this as follows:
\[
\begin{tikzpicture}
\draw (0,0) node[anchor=north]{$p_0$}
  -- (4,0) node[anchor=north]{$p_1$}
  -- (2,3) node[anchor=south]{$p_2$}
  -- cycle;
 \draw (0.6,1.7) node[anchor=south]{$l_{02}$};
  \draw (3.4,1.7) node[anchor=south]{$l_{12}$};
 \draw (2,-0.5) node[anchor=south]{$l_{01}$};

\end{tikzpicture}
\]

\subsubsection{Indecomposable $T$-sheaves on $\mathbb{P}^2$}

By Theorem \ref{finitary_T} $\H^T_{\mathbb{P}^2} \simeq U(\mathfrak{n}^T_{\mathbb{P}^2})$, where $\mathfrak{n}^T_{\mathbb{P}^2}$ (which is denoted by $\mathfrak{n}$ from now on) has as basis the indecomposable $T$-sheaves on $\mathbb{P}^2$. We begin by enumerating these. In order to do so, it is useful to classify connected two-dimensional (possibly infinite) skew shapes $\S$ by their asymptotic behavior along the axes. This asymptotic behavior is characterized by a pair of extended integers $(\S_x, \S_y)$, $0 \leq \S_x, \S_y \leq \infty$ (note that $\S_x, \S_y$ may take the value $\infty$), where $\S_x$ (resp. $\S_y$) denotes the number of boxes in the intersection of $\S$ with an infinite vertical (resp. horizontal) strip as these move out to $\infty$. For example, the infinite shape $\S$:

\begin{center}
\Ylinethick{1pt}
\gyoung(:\vdots:\vdots,;;,;;,;;,;;;;;:\cdots,:;;;;:\cdots,::;;;:\cdots)
\end{center}
\Ylinethick{1pt}
\vspace{0.1cm}
has asymptotics $\S_x=3, \S_y = 2$. In what follows, we will say skew shapes rather than generalized skew shapes, and when we wish to emphasize the finiteness, we will say finite skew shapes. The following is straightforward:

\begin{prop} \label{ss_classification}
Let $\S$ be a connected skew shape in $A=\fun \langle x, y \rangle $. Then exactly one of the following is true:
\begin{itemize}
\item $(\S_x, \S_y) = (0,0)$. In this case $S$ is a finite skew shape, and $\wt{M}_{\S}$ is a torsion sheaf supported at the origin in $\mathbb{A}^2$.  
\item $(\S_x, \S_y) = (k, 0)$ for some $1 \leq k < \infty$. In this case $\S$ is infinite, and $\wt{M}_{\S}$ is a torsion sheaf supported along the $x$-axis in $\mathbb{A}^2$. 
\item $(\S_x, \S_y) = (0, k)$ for some $1 \leq k < \infty$. In this case $\S$ is infinite, and $\wt{M}_{\S}$ is a torsion sheaf supported along the $y$-axis in $\mathbb{A}^2$. 
\item $(\S_x, \S_y) = (k, l) $ {for some $1 \leq k,l < \infty$}. In this case $\S$ is infinite, and $\wt{M}_{\S}$ is {a torsion sheaf} supported along the union of the axes in $\mathbb{A}^2$. 
\item $(\S_x, \S_y) = (\infty, \infty)$. In this case $\S$ is infinite, $\wt{M}_{\S}$ is torsion-free, and $supp(\wt{M}_{\S}) = \mathbb{A}^2$.
\end{itemize}
\end{prop}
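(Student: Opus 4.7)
The strategy is to translate the asymptotic invariants $(\S_x, \S_y)$ into conditions on the annihilator ideal $\textrm{Ann}(M_\S) \subset A = \fun\langle x, y\rangle$, and then read off the support via the identity $\textrm{supp}(\wt{M}_\S) = V(\textrm{Ann}(M_\S))$ established earlier. The spectrum $\mspec A$ consists of only the four primes $(0), (x), (y), (x,y)$, so there are only four irreducible closed subsets of $\mathbb{A}^2$ (the whole plane, the two coordinate axes, and the origin), and the support is fully determined by which of these primes contain the annihilator. By the definition of $M_\S$, a monomial $x^a y^b$ lies in $\textrm{Ann}(M_\S)$ if and only if $(i+a, j+b) \notin \S$ for every $(i,j) \in \S$.

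Before the case analysis I will make $\S_x, \S_y$ rigorous. Using convexity, each column $C_i := \{j : (i,j)\in\S\}$ is an interval $[l_i, u_i]$, with $l_i$ non-increasing in $i$ and $u_i$ non-decreasing in $i$ (both follow from applying the rectangle property of convexity to comparable pairs of boxes). Hence $l_i$ stabilizes at some $L$ while $u_i$ either stabilizes at some $U$ or tends to $\infty$, so $\S_x := \lim_{i\to\infty}|C_i|$ is a well-defined element of $\mathbb{Z}_{\geq 0}\cup\{\infty\}$. Moreover $\S_x = 0$ iff $\S$ is bounded in the $x$-direction iff $x^a \in \textrm{Ann}(M_\S)$ for some $a \geq 1$, and symmetrically for $\S_y$. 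The five cases therefore exhaust all possibilities for $(\S_x, \S_y)$ and are mutually exclusive.

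Cases 1--3 are direct. In Case 1 both $x^N$ and $y^M$ lie in the annihilator, so $\mathfrak{m}^{N+M} \subset \textrm{Ann}(M_\S)$ and the support reduces to the origin (while boundedness in both directions forces $\S$ finite); in Cases 2 and 3 exactly one pure power lies in the annihilator, so $\textrm{Ann}(M_\S) \subset (y)$ or $\subset (x)$ strictly, yielding support equal to the $x$-axis or the $y$-axis. For Case 4 the finite values $L, U$ for the column invariants, together with their row counterparts $L', U'$, show that $\S$ is contained in a union of a vertical strip $\{i \leq A\}$ and a horizontal strip $\{j \leq B\}$ for some finite $A, B$; whenever $a > A$ and $b > B$, shifting any box by $(a, b)$ exits this union, so $x^a y^b \in \textrm{Ann}(M_\S)$. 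Since no pure power of $x$ or $y$ lies in the annihilator here, $\textrm{Ann}(M_\S)$ is contained in both $(x)$ and $(y)$ but not in $(0)$, forcing the support to be exactly the union of the two axes.

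Case 5 is the subtlest, since it requires torsion-freeness in addition to the support statement. I will use the standard fact that any convex sub-poset of $\mathbb{Z}_{\geq 0}^2$ can be written as $F \cap I$ with $F$ an order filter and $I$ an order ideal, and observe that $I$ is in turn determined by a non-increasing function $\phi\colon \mathbb{Z}_{\geq 0} \to \mathbb{Z}_{\geq 0}\cup\{\infty\}$ via $I = \{(i,j) : j \leq \phi(i)\}$. The hypothesis $\S_x = \infty$ forces $\phi(i) = \infty$ for every $i$, whence $I = \mathbb{Z}_{\geq 0}^2$ and $\S = F$ is an order filter. Any non-empty filter is closed under all upward translations, which gives $\textrm{Ann}_A((i,j)) = (0)$ for every $(i,j) \in \S$ (torsion-freeness of $\wt M_\S$) and also $\textrm{Ann}(M_\S) = (0)$ (full support). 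The two main obstacles I expect are the geometric control of $\S$ in Case 4 (combining the column and row stabilizations) and the filter-decomposition step in Case 5, both of which pivot on the convexity hypothesis.
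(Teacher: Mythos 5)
The paper offers no argument for this proposition beyond the remark that it is straightforward, so your write-up has to stand on its own; the strategy you chose --- controlling $\on{Ann}(M_{\S})$ and reading off the support from the four primes $(0),(x),(y),(x,y)$ of $\spec\fun\langle x,y\rangle$ via $\on{supp}(\wt{M}_{\S})=V(\on{Ann}(M_{\S}))$ --- is surely the intended one, and Cases 1--4 and the filter argument in Case 5 are essentially right. There are, however, two concrete defects in the preliminary step where you make $\S_x,\S_y$ rigorous.

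First, the claim that $u_i$ is non-decreasing in $i$ is false, and the paper's own first example of a skew shape refutes it: for $\T=\{x_1,x_1^2,x_1^3,x_2,x_1x_2,x_2^2\}$ the column maxima are $u_0=2$, $u_1=1$, $u_2=u_3=0$. The rectangle property applied to the comparable pair $(i,l_i)\le(i',u_{i'})$ only yields that $l_i$ is non-increasing; the tops of the columns may drop. The correct replacement, which still gives everything you use, is: $l_i$ stabilizes at some $L$ for $i\ge i_0$, and then for $i_0\le i\le i'$ the rectangle spanned by $(i,L)$ and $(i',u_{i'})$ lies in $\S$, so $u_i\ge u_{i'}$; thus $u_i$ is non-increasing in $\mathbb{Z}_{\ge 0}\cup\{\infty\}$ from $i_0$ on and stabilizes at some $U$. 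This is what makes $\S_x$ well defined and what you need for the strip containment in Case 4. Second, you assert but do not prove that the five listed cases exhaust all values of $(\S_x,\S_y)$: the mixed possibilities $(\infty,k)$ and $(k,\infty)$ with $k<\infty$ must be ruled out. This again follows from the corrected column analysis: $\S_x=\infty$ forces $C_i=[L,\infty)$ for all $i\ge i_0$, so every row $j\ge L$ contains $[i_0,\infty)$ and hence $\S_y=\infty$. With these two repairs the argument is complete (the torsion assertion in Cases 1--4 is implicit in $\on{Ann}(M_{\S})\neq 0$, but it would be worth one sentence to say so).
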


We will refer to the pair $(\S_x, \S_y)$ as the \emph{type} of a skew shape or corresponding module. If $A' \simeq \fun \langle x, y \rangle$ is another free commutative monoid on two generators, and $\S \in A'$ a skew shape, the type will depend on the choice of generators (there are two choices corresponding to ordering). 

We will view a coherent sheaf on $\mathbb{P}^2$ in terms of gluing data on the three patches $X_{\sigma_i}, i=0,1,2$. In other words, a $T$-sheaf $\F$ on $\mathbb{P}^2$ is given by the data $(M_{i}, \phi_{ij}), i, j \in \{ 0,1,2 \}$, where each $M_i$ is a direct sum of generalized skew shapes in $S_{\sigma_i}$, and 
\begin{equation} \label{P2_gluing}
\phi_{ij}: \wt{M}_{j} \vert_{X_{\sigma_i} \cap X_{\sigma_j} } \rightarrow \wt{M}_{i} \vert_{X_{\sigma_i} \cap X_{\sigma_j} } 
\end{equation}
 subject to the the cocycle condition. A subsheaf $ \F' \subset \F $ thus corresponds to submodules $N_i \subset M_i$ compatible with the $\phi_{ij}$. 

We note that each of the intersections $X_{\sigma_i} \cap X_{\sigma_j}$ corresponds to a distinguished open affine subset of each copy of $\mathbb{A}^2$. The following shows that indecomposables on $\mathbb{A}^2$ are indecomposable or $0$ when restricted to these. 

\begin{prop} \label{stays_indecomposable}
Let $\T$ be a connected skew shape in  $A=\fun \langle x, y \rangle $, and $S \subset A$ a multiplicative subset. Then $S^{-1} M_{\T}$ is either 0 or indecomposable. 
\end{prop}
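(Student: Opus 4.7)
The plan is to assume $S^{-1}M_{\T} \neq 0$ and deduce indecomposability by a contradiction that uses the connectedness of the poset $\T$ together with the behaviour of the natural map $M_{\T} \to S^{-1} M_{\T}$, $m \mapsto m/1$.

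First, suppose $S^{-1}M_{\T} = N_1 \oplus N_2$ with $N_1, N_2$ both nonzero submodules over $S^{-1}A$. Since every element of $S^{-1}M_{\T}$ has the form $m/s = (1/s)\cdot (m/1)$, the image of $M_{\T}$ generates $S^{-1}M_{\T}$ as an $S^{-1}A$-module. Partition $\T$ into three disjoint subsets
\[
\T_1 = \{\, t \in \T \mid t/1 \in N_1 \setminus \{0\}\,\}, \quad \T_2 = \{\, t \in \T \mid t/1 \in N_2 \setminus \{0\}\,\}, \quad Z = \{\, t \in \T \mid t/1 = 0 \in S^{-1}M_{\T}\,\},
\]
and note that $\T_1 \cup \T_2 \cup Z = \T$.

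The key step is to show that $\T_i \cup Z$ is closed under the poset neighbour relation on $\T$ (i.e.\ if $t \in \T_i$ and $t'\in\T$ satisfies either $t' = at$ or $t = at'$ for some $a \in A$, then $t' \in \T_i \cup Z$). The forward direction ($t'=at$) is immediate, since $N_i$ is an $S^{-1}A$-submodule and $t'/1 = a(t/1) \in N_i$. For the backward direction ($t = at'$), suppose towards a contradiction that $t' \in \T_j$ with $j \neq i$. Then $t/1 = a(t'/1) \in N_j$; but $t/1 \in N_i \setminus \{0\}$ as well, so $t/1 \in N_i \cap N_j = \{0\}$, contradicting $t \in \T_i$. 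Hence $t' \in \T_i \cup Z$, as claimed. Note that it is exactly here, in the backward closure, that the direct-sum (trivial intersection) property of $N_1 \oplus N_2$ gets used; this is the step I expect to be the main subtlety.

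Once both $\T_1 \cup Z$ and $\T_2 \cup Z$ are shown to be closed under the poset neighbour relation, the connectedness of $\T$ (as a sub-poset of $A\setminus\{0_A\}$, hence as an undirected graph via the Hasse diagram) forces one of $\T_1, \T_2$ to be empty: otherwise, a path in the Hasse diagram from a point of $\T_1$ to a point of $\T_2$ would, by induction along the path, keep us inside $\T_1 \cup Z$, producing an element of $\T_2 \cap (\T_1 \cup Z) = \emptyset$. Without loss of generality $\T_2 = \emptyset$, so $m/1 \in N_1 \cup \{0\}$ for every $m \in M_{\T}$. Since the image of $M_{\T}$ generates $S^{-1}M_{\T}$ as an $S^{-1}A$-module and $N_1$ is a submodule, we conclude $S^{-1}M_{\T} \subseteq N_1$, whence $N_2 = 0$, the desired contradiction. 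Therefore $S^{-1}M_{\T}$ is $0$ or indecomposable.
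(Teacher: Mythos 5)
Your overall strategy --- partitioning $\mathcal{T}$ according to which summand the image $t/1$ lands in, and then invoking connectedness of the Hasse diagram --- is genuinely different from the paper's proof, which first reduces an arbitrary multiplicative set $S$ to the complement of one of the four primes $(0)$, $(x)$, $(y)$, $(x,y)$ and then checks each of those localizations directly against the classification of connected skew shapes by asymptotic type (Proposition \ref{ss_classification}). The steps you flag as the main subtlety are in fact fine: the forward and backward closure of each $\mathcal{T}_i$ under the neighbour relation, and the final implication $\mathcal{T}_2=\emptyset\Rightarrow N_2=0$, are all correct.

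The gap is in the path induction. Your partition has three blocks, $\mathcal{T}_1$, $\mathcal{T}_2$ and $Z$, and the closure property you prove only constrains the neighbours of elements of $\mathcal{T}_1$ and $\mathcal{T}_2$; it says nothing about the neighbours of an element of $Z$. So along a path $t_0,t_1,\dots,t_k$ from $\mathcal{T}_1$ to $\mathcal{T}_2$, the induction breaks the first time some $t_i$ lands in $Z$: nothing then prevents $t_{i+1}$ from lying in $\mathcal{T}_2$. Concretely, the configuration $t\in\mathcal{T}_1$, $u=at\in Z$, $v\in\mathcal{T}_2$ with $u=bv$ is not excluded by anything you have established, since $u/1=a(t/1)=b(v/1)=0$ is perfectly consistent with $t/1\in N_1\setminus\{0\}$ and $v/1\in N_2\setminus\{0\}$. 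What your argument actually needs is that the set $\mathcal{T}'=\mathcal{T}_1\cup\mathcal{T}_2$ of elements surviving localization is itself connected under the neighbour relation; granting that, you are done, because adjacent surviving elements must lie in the same $\mathcal{T}_i$. But connectedness of $\mathcal{T}'$ does not follow from abstract connectedness of $\mathcal{T}$ alone --- it uses the convexity of skew shapes and their asymptotic structure --- and this is exactly the content the paper supplies by reducing to localization at a prime and running through the cases of Proposition \ref{ss_classification}. You should either import that reduction or prove directly that the surviving sub-poset is connected.
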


\begin{proof}
Let $\wt{S} \subset A$ be the set of divisors of elements of $S$ - i.e. $\wt{S} := \{ a \in A \vert \exists b \in A \textrm{ such that } ab \in S \}$. Then  $S^{-1}M_{\T} = \wt{S}^{-1} M_{\T}$. $\mathfrak{p} = A \backslash \wt{S}$ is an ideal, which is prime since $S$ and therefore $\wt{S}$ are multiplicatively closed. It follows that $\mathfrak{p}$ is one of $( 0 ), ( x ) , ( y ), ( x,y )$. One can check directly using the classification in Proposition \ref{ss_classification} that the in each case $\wt{S}^{-1} M_{\T}$ is indecomposable or $0$.
\end{proof}

We can now proceed to classify the indecomposable $T$-sheaves on $\mathbb{P}^2$.
We introduce three classes of indecomposable sheaves, having zero, one, and two-dimensional support respectively. 

\begin{enumerate}
\item Let $\S$ be a connected skew shape of type $(0,0)$ in $\mathbb{F}_1\langle x_1, x_2 \rangle \backslash \{0\}$. Let $M_0 = M_{\S}, M_1 = 0, M_2=0$, and $\phi_{ij} = 0$ for all $i \neq j$. We denote this sheaf $\mathfrak{T}_{\S, 0}$. This is a torsion sheaf supported at $p_0$. $\mathfrak{T}_{\S, i}, i=1,2$ may be defined similarly, yielding torsion sheaves supported at $p_1$ (resp. $p_2$). $\mathfrak{T}_{\S, i}$ is clearly indecomposable.

\item We introduce two types of $T$-sheaves supported along the triangle formed by the three lines $l_{01}, l_{02}, l_{12}$.

We begin by observing that if $\mc{S}_0 \subset S_{\sigma_0} = \mathbb{F}_1\langle x_2, x_1 \rangle \backslash \{0\}$ and $\mc{S}_1 \subset  S_{\sigma_1} =\mathbb{F}_1 \langle x_1^{-1}, x_1^{-1}x_2 \rangle \backslash \{0\}$ are two connected skew shapes of types $(a,b)$ and $(c,d)$ respectively, and $a,b,c,d < \infty$, then an isomorphism
\[
\phi_{01}: \wt{M}_{\S_0} \vert_{X_{\sigma_0} \cap X_{\sigma_1} } \rightarrow \wt{M}_{\S_1} \vert_{X_{\sigma_0} \cap X_{\sigma_1} } 
\]
exists if and only if the matching condition $b=c$ holds. If $b=c > 0$, then $\phi_{01}$ is given by an integer which is the degree of the restriction of the glued sheaf to the line $l_{01}$. If $b=c=0$, then $\phi_{01}$ is necessarily zero. The same observation obviously applies to any pair of patches $i, j \in \{0,1,2\}$. We note also if $\mc{S}_i \subset S_{\sigma_i}$ has finite asymptotics $(a,b)$, then $\wt{M}_{\S_i}$ restricted to the triple intersection $X_{\sigma_0} \cap X_{\sigma_1} \cap X_{\sigma_2}$ vanishes, which means that the cocycle condition for sheaves built out of these modules is trivial - we need only specify $\phi_{01}, \phi_{02}, \phi_{12}$ without any restrictions. 

For $i,j \in \{0,1,2 \}$ viewed as residues $ \on{mod} 3$, consider a finite sequence of connected skew shapes {of type $(a,b)$, where $a,b < \infty$:}
\[
\mc{S}_{i, 1}, \S_{i+1,2}, \cdots, \mc{S}_{j-1,q-1}, \mc{S}_{j,q}
\] 
where $q \geq 1$, and integers $n_{r}, 1 \leq r \leq q-1$, such that
\begin{itemize}
\item For each $1 \leq e \leq q$  $\mc{S}_{k, e} \subset S_{\sigma_k}$, where $k$ is viewed as residues mod $3$.
\item $\mc{S}_{k,r}$ and $\mc{S}_{k+1,r+1}$ have matching non-zero type along $l_{k,k+1}$. 
\item $\S_{i,1}$ has type $(0,a), a > 0$ and $\S_{j,q}$ has type $(b,0), b > 0$. 
\end{itemize}
Gluing $\wt{M}_{\S_{k,r}}$ to $\wt{M}_{\S_{k+1,r+1}}$ using the $\phi_{k,k+1}$ specified by $n_r$, we obtain a sheaf denoted by $\mathfrak{H}[i,j,q,n_r, \mc{S}_{k,r}]$, which we call a \emph{helix}. As the name suggests, it can be thought of as a staircase built out of skew shapes which begins at $i \in \{0,1,2\}$ and winds {counter-clockwise}  around the triangle of $\mathbb{P}^1$'s, ending at $j$. All helices can be made to wind counter-clockwise by reading the list backwards if necessary. We note that the requirements in the third bullet point above are made necessary if we are to assign $i$ as the "starting" point of the helix and $j$ as the "end" point via the matching condition - i.e. if $\S_{i,1}$ has type $(a',a)$ with $a' > 0$, then it must be glued to a skew shape of type $\S_{i-1, 0}$ of type $(a'',a')$ on $X_{\sigma_{i-1}} \cap X_{\sigma_{i}}$, which in turn means that that $i$ was not the starting point.

Suppose instead that in the above data we have $j=i-1 \on{mod} 3$, and that $\S_{i,1}, \S_{j,q}$ have types $(a,b)$, $(a',b')$ with all $a,b,a',b' > 0$ with matching type along $l_{i-1,i}$, and that we are additionally given an integer $n_q$. Note that in this case $q$ is a multiple of $3$. Performing one additional gluing of $\wt{M}_{\S_{j,q}}$ to $\wt{M}_{\S_{i,1}}$ using $n_q$, we obtain a sheaf denoted by $\mathfrak{C}[i,j,q,n_r, \mc{S}_{k,r}]$, which we call a \emph{cycle}. It winds $q/3$ times around the triangle analogous to the covering $z \rightarrow z^{q/3}$ of the punctured plane. 

The sheaves $\mathfrak{H}[i,j,q,n_r, \mc{S}_{k,r}]$, $\mathfrak{C}[i,j,q,n_r, \mc{S}_{k,r}]$ are easily seen to be indecomposable under the above conditions. 

\item Let $\S_0 \subset \mathbb{F}_1\langle x_2, x_1 \rangle \backslash \{0\}$, $\S_1 \subset \mathbb{F}_1\langle x_1^{-1}, x_1^{-1}x_2 \rangle \backslash \{0\} $, $\S_2 \subset \mathbb{F}_1\langle x_1x_2^{-1}, x_2^{-1}\rangle \backslash \{0\}$ be three connected skew shapes of types $(\infty, \infty)$. To describe the gluing data, we first minimally complete each shape $\S_i$ to $\ol{\mc{S}_i}$ corresponding to a free module with generator $g_i$ by "filling in" the corner. For the shape $\T$ in Example \ref{tf_example}, this looks as follows:

\begin{center}
\Ylinethick{1pt}
\gyoung(:\vdots:\vdots:\vdots:\vdots:\vdots:\vdots:\iddots,;;;;;;:\cdots,;;;;;;:\cdots,:a;;;;;:\cdots,:a;;;;;:\cdots,:g:a:a:a;;:\cdots)
\end{center}
where the boxes added to $\ol{\T}$ are labeled $a$, and $g$ for the generator. We thus have $$\wt{M}_{\ol{S_0}}\vert_{X_{\sigma_0} \cap X_{\sigma_1} } \simeq \wt{M}_{\ol{S_1}}\vert_{X_{\sigma_0} \cap X_{\sigma_1} }  \simeq \wt{\mathbb{F}_1\langle x_1, x^{-1}_1, x_2 \rangle } $$ and the isomorphism $\phi_{01}$ is specified by an integer $m_{01}$ such that $g_0 = x^{-m_{01}}_1 g_1$. Similarly, the isomorphisms $\phi_{12}$ and $\phi_{02}$ are specified by integers $m_{12}, m_{02}$ such that $g_1 = (x^{-1}_1 x_2)^{-m_{12}} g_2$ and $g_0 = x^{-m_{02}}_2 g_2$. The cocycle condition now forces $m_{01}=m_{02}=m_{12} = m$. We denote the resulting sheaf by $\mathfrak{TF}[\S_i, m]$. It is easily seen to be a torsion-free indecomposable sheaf. By construction, it embeds minimally into the line bundle $\mc{O}(m)$, and there is an exact sequence
\[
\ses{\mathfrak{TF}[\S_i, m]}{\mc{O}(m)}{\mathfrak{Q}}
\]
where $$\mathfrak{Q} = \mathfrak{T}_{\mc{U}_0, 0} \oplus \mathfrak{T}_{\mc{U}_1, 1} \oplus \mathfrak{T}_{\mc{U}_2, 2}  $$ where the $\mathfrak{T}_{\mc{U}_i, i} $ are point sheaves as in $(1)$, and $\mc{U}_i = \ol{\S_i} \backslash \S_i$ are finite shapes. We can visualize the sheaves $\mathfrak{TF}[\S_i, m]$ as follows:

\[
\begin{tikzpicture}
\draw (0,0) node[anchor=north]{$\S_0$}
  -- (4,0) node[anchor=north]{$\S_1$}
  -- (2,3) node[anchor=south]{$\S_2$}
  -- cycle;
 \draw (2,1.5) node[anchor=north]{$m$};
\end{tikzpicture}
\]

\end{enumerate}

The following proposition shows that the above list is exhaustive. 

\begin{prop} \label{P2_classification}
Let $\F$ be an indecomposable sheaf in $\Coh^T (\mathbb{P}^2)$. Then $\F$ is isomorphic to one of $\mathfrak{T}_{\S, i}$,  $\mathfrak{H}[i,j,q,n_r, \mc{S}_{k,r}]$, $\mathfrak{C}[i,j,q,n_r, \mc{S}_{k,r}]$, $\mathfrak{TF}[\S_i, m]$.
\end{prop}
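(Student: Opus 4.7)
The plan is to analyze an indecomposable $T$-sheaf $\F$ on $\Ptwo$ via its restrictions to the three affine charts $X_{\sigma_i}$ together with the gluing data. Set $M_i := \F(X_{\sigma_i}) \in \on{Mod}^{gr,fg,\alpha}_{S_{\sigma_i}}$ and let $\phi_{ij}$ be the gluing isomorphisms as in \eqref{P2_gluing}. By Theorem \ref{theorem: sknewshape} and the theorem extending it to arbitrary modules, each $M_i$ decomposes uniquely as a finite direct sum $M_i = \bigoplus_{k} M_{\S_{i,k}}$ with each $\S_{i,k}$ a connected skew shape, whose type falls into one of the five classes of Proposition \ref{ss_classification}. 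By Proposition \ref{stays_indecomposable}, each restriction $\wt{M}_{\S_{i,k}}\vert_{U_{ij}}$ is either zero or indecomposable, and by Proposition \ref{subobjects} each $\phi_{ij}$ carries indecomposable summands to indecomposable summands, matching nonzero localizations bijectively.

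Form the auxiliary graph $G$ with vertex set $\bigsqcup_{i=0}^{2}\{(i,k)\}$ and an edge between $(i,k)$ and $(j,l)$ whenever $\phi_{ij}$ identifies the nonzero restrictions of $\wt{M}_{\S_{j,l}}$ and $\wt{M}_{\S_{i,k}}$ on $U_{ij}$. If $G$ were disconnected, the gluing construction of Section \ref{gluing} applied separately to each connected component would produce a non-trivial direct sum decomposition of $\F$, contradicting indecomposability. Hence $G$ is connected.

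It then suffices to enumerate connected configurations by the types of the $\S_{i,k}$.
\begin{itemize}
\item If every $\S_{i,k}$ has type $(0,0)$, then no shape contributes any edge to $G$, so connectedness forces exactly one vertex: a single finite skew shape on one chart, giving $\mathfrak{T}_{\S,i}$.
\item If some $\S_{i,k}$ has type $(\infty,\infty)$, its restrictions are nonzero and torsion-free on both intersections $U_{i,i\pm 1}$; the matching summands on the other charts must be torsion-free (a gluing isomorphism cannot identify torsion with torsion-free), hence also of type $(\infty,\infty)$. Completing each $\S_i$ to the free shape $\ol{\S_i}$ presents the gluings by integers $m_{01}, m_{12}, m_{02}$, and the cocycle condition at the triple overlap (where the completions are identified as the same rank-one free module) forces $m_{01}=m_{12}=m_{02}=m$. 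No further components can be glued without being disconnected from this torsion-free triangle, so $\F \simeq \mathfrak{TF}[\S_i,m]$.
\item Otherwise every nonzero $\S_{i,k}$ has finite type $(k,0)$, $(0,k)$, or $(k,l)$ with $0 < k,l < \infty$. Such a shape has nonzero localization on at most two of the intersections adjacent to its chart, so each vertex of $G$ has degree at most $2$. A connected graph of maximum degree two is either a path or a cycle. Tracking the matching types along the triangle $l_{01}\cup l_{12}\cup l_{02}$ (traversed counter-clockwise so that a shape of type $(a,b)$ glues across $l_{i,i+1}$ via its ``$b$-side'') the path case gives a helix $\mathfrak{H}[i,j,q,n_r,\S_{k,r}]$, whose endpoints necessarily have types $(0,a)$ and $(b,0)$; the cycle case gives $\mathfrak{C}[i,j,q,n_r,\S_{k,r}]$, with $q$ a multiple of $3$ so that the final chart index matches the starting one.
\end{itemize}

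The main obstacle is the bookkeeping for the torsion-free case, specifically deriving the equality $m_{01}=m_{12}=m_{02}$ from the cocycle relation on the triple overlap, and ruling out ``mixed'' configurations (e.g.~a torsion-free component glued to a torsion one). Both rest on the fact that a gluing isomorphism between localized modules must preserve the torsion/torsion-free dichotomy and that a torsion-free rank-one module over the triple intersection admits only $\mathbb{Z}$'s worth of automorphisms as a graded module, namely multiplication by a single integer power of $x_1$ (equivalently $x_2$).
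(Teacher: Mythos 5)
Your proposal is correct and follows essentially the same route as the paper's proof: decompose $\F$ chart-by-chart into connected skew shapes, sort the summands by asymptotic type via Proposition \ref{ss_classification}, and use a connectivity argument on the graph of gluing identifications (the paper uses a directed graph with in/out-degree at most one, yielding ladders or pure cycles, which is equivalent to your undirected degree-$\leq 2$ paths-or-cycles dichotomy) to land in the four families. The only points to tighten are cosmetic: your trichotomy should note that a type-$(0,0)$ summand is an isolated vertex of $G$ and hence cannot coexist with other summands (the paper handles this by splitting it off via Proposition \ref{splitting_sheaves}), and the unit group of the rank-one free module on the triple overlap is $\mathbb{Z}^2$, not $\mathbb{Z}$ --- it is the pairwise overlaps that give a single integer each, with the cocycle relation then forcing $m_{01}=m_{12}=m_{02}$ exactly as you conclude.
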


\begin{proof}

Suppose that $\mc{F}$ is an indecomposable coherent $T$-sheaf described in terms of gluing data by $(M_i, \phi_{ij}), i, j \in \{ 0, 1, 2 \}$, with $M_i = \oplus^{v_i}_{r=1} M_{\S_{i,r}}$, for connected skew shapes ${\S_{i, r} \subset S_{\sigma_i}}$.

\begin{itemize}
\item Suppose that one of the $M_i$'s, say $M_0$, has an indecomposable summand of type $(0,0)$, denoted $M'_0$. Write $M_0 = M'_0 \oplus M''_0$. Since $\wt{M}'_0$ restricted to $X_{\sigma_0} \cap ( X_{\sigma_1} \cup X_{\sigma_2}) $ vanishes, the proof of Proposition \ref{splitting_sheaves} implies that $M'_0$ generates a summand of the form $\mathfrak{T}_{\S, 0}$. Since $\F$ is indecomposable, we must have $\F = \mathfrak{T}_{\S, 0}$.
\item Suppose that one of $M_i$'s, say $M_0$ has an indecomposable summand $N_0$ of type $(\infty, \infty)$, and write $M_0 = N_0 \oplus K_0$. $\wt{N}_0 \vert_{X_{\sigma_0} \cap X_{\sigma_i} }$ is isomorphic to the structure sheaf, and from the above classification and Proposition \ref{splitting_isomorphisms} it follows that $M_1=N_1 \oplus K_1$, $M_2 = N_2 \oplus K_2$ where $N_i, i=1,2$ are also of type $(\infty, \infty)$, and $\phi_{02}: \wt{N}_2 \simeq \wt{N}_0$, $\phi_{01}: \wt{N}_1 \simeq \wt{N}_0$ are isomorphisms over $X_{\sigma_0} \cap X_{\sigma_i} $. The cocycle condition for $\{ \phi_{ij} \}$ the forces $\phi_{12}: \wt{N}_2 \rightarrow \wt{N}_1$ on $X_{\sigma_1} \cap X_{\sigma_2}$ to be an isomorphism. It follows that $\F$ has a summand of the form $\mathfrak{TF}[\S_i, m]$, and since $\F$ is indecomposable, must be isomorphic to it. 
\item If $\F$ is not isomorphic to $ \mathfrak{T}_{\S, i}$ or $\mathfrak{TF}[\S_i, m]$, then each $M_i$ must be a sum of modules having finite type $(a,a')$ with at least one of $a, a'$ nonzero. For each $i \in \{ 0,1,2 \}$, write $$M_i =  M_{\S_{i,1}} \oplus M_{\S_{i,2}}  \oplus \cdots \oplus M_{\S_{i,m_i}} $$ with the $\S_{i,k}$ connected. 
We note that if $\S_{i,k}$ has nonzero type along $l_{i,i+1}$, then $\phi_{i,i+1}$ yields an isomorphism 
\begin{equation} \label{edge_iso}
\phi_{i,i+1}: \wt{M}_{\S_{i,k}} \vert_{X_{\sigma_i} \cap X_{\sigma_{i+1}} } \rightarrow \wt{M}_{\S_{i+1,k'}} \vert_{X_{\sigma_i} \cap X_{\sigma_{i+1}} } 
\end{equation}
for a unique $M_{\S_{i+1,k'}}$

Consider the colored directed graph $\Gamma_{\F}$ whose vertices are the $M_{\S_{i,k}}$'s, and there is a directed edge from $M_{\S_{i,k}}$ to $M_{\S_{i+1,k'}}$ if $\S_{i,k}$ has nonzero type along $l_{i,i+1}$ and  \eqref{edge_iso} holds, colored by the degree of the gluing map along $l_{i,i+1}$.  $\Gamma_{\F}$ is easily seen to have the property that each vertex has at most one incoming and at most one outgoing edge. It is also clear that $\F$ is indecomposable if and only if $\Gamma_{\F}$ is connected. The only connected directed graphs having this property are either ladders or pure cycles. If $\Gamma_{\F}$ is a ladder, then $\F$ is a helix (i.e. of type $\mathfrak{H}$), and if a pure cycle, a cycle (i.e. of type $\mathfrak{C}$). 

\end{itemize}

\end{proof}

\begin{rmk}
It follows from the above classification that every indecomposable torsion-free $T$-sheaf is of rank one. In particular, every locally free sheaf is a direct sum of line bundles, as shown in \cite{GvB, Pir}
\end{rmk}

By Proposition \ref{P2_classification}, the Hall Lie algebra $\n$ here is spanned by delta-functions supported on isomorphism classes of the sheaves  $\mathfrak{T}_{\S, i}$,  $\mathfrak{H}[i,j,q,n_r, \mc{S}_{k,r}]$, $\mathfrak{C}[i,j,q,n_r, \mc{S}_{k,r}]$, $\mathfrak{TF}[\S_i, m]$ above. It appears difficult to describe in explicit terms. We restrict ourselves to a few remarks, intending to study its structure more closely in a future paper. 

\begin{itemize}
\item $\n$ contains three commuting copies $\mathfrak{t}_0, \mathfrak{t}_1, \mathfrak{t}_2$ of the Lie algebra $\mathfrak{sk}_2$ {from Section \ref{skew_shapes}}, with $\mathfrak{t}_i$ spanned by the sheaves $\mathfrak{T}_{\S, i}$. Each $\mathfrak{t}_i$ acts on the sheaves with higher-dimensional support via Hecke-type operators. 
\item Sheaves supported on each line $l_{i+1,i+2}$ opposite $p_i$ generate a Lie subalgebra $\mathfrak{l}_i$. These three subalgebras are no longer mutually commuting however. Each $\mathfrak{l}_i$ carries commuting actions of $\mathfrak{t}_{i+1}$ and $\mathfrak{t}_{i+2}$. 
\item The isomorphism classes of torsion-free sheaves $[\mathfrak{TF}[\S_i, m]]$  generate a commutative subalgebra of $H^T_{\mathbb{P}^2}$. To see this suppose that $\F, \F'$ are indecomposable and torsion-free, and
\begin{equation} \label{ses_tf}
\ses{\F}{\G}{\F'}
\end{equation}
is an exact sequence. By Lemma \ref{subobjects} and the above classification, $\F$ would have to embed as a sub-sheaf of some $\mathfrak{TF}[\S_i, m]$. The cokernel of this map, if non-zero, would be torsion. It follows that \ref{ses_tf} splits, showing that  $[\F], [\F']$ commute in $H^T_{\mathbb{P}^2}$.
\end{itemize}

\begin{myeg}
Consider the closed embedding $\iota_j : \mathbb{P}^1 \rightarrow \mathbb{P}^2$ whose image is the $\mathbb{P}^1$  $l_{j+1, j+2}$ opposite the vertex $p_j$, and let $\mc{O}_j (n)$ denote $\iota_{j,*} \mc{O}_{\mathbb{P}^1}(n)$. For instance, $\mc{O}_2 (n)$ may be described in terms of gluing data as $(M_{\S_0}, M_{\S_1}, M_{\S_2}, \phi_{ij})$, where $M_{\S_2} = 0$, $\S_0, \S_1$ are the infinite strips
\[
\S_0: \; \; \gyoung(g;;;;;;;;:\cdots)
\]
\[
\S_1: \; \; \gyoung(h;;;;;;;;:\cdots)
\]
where the generator $g$ of $M_{\S_0}$ is annihilated by $x_2$, the generator $h$ of $\M_{\S_1}$ is annihilated by by $x_1^{-1} x_2$, $\phi_{01}$ identifies $h$ with $x^n_1 g$, and $\phi_{20}$ and $\phi_{12}$ are zero. One can ask about the Lie subalgebra $\mathfrak{k}$ generated by $\{\mc{O}_i (n) \}, i \in \{ 0,1,2 \},  n \in \mathbb{Z}$ inside $\n$. 

Let $\mathbb{{TF}}$ denote the free commutative subalgebra of $H^T_{\mathbb{P}^2}$ generated by the isomorphism classes of torsion-free sheaves, and let $\mathbb{J} \subset \mathbb{{TF}}$ denote the ideal generated by the isomorphism classes of those torsion-free sheaves which are not locally free. Let $\Bun$ denote the set of isomorphism classes of locally free sheaves on $\mathbb{P}^2$, which are simply direct sums of line bundles. $\mathbb{TF}/\mathbb{J}$ is isomorphic to the polynomial ring on $x_n = [\mc{O}(n)], n \in \mathbb{Z}$, which in turn may be identified with the vector space of functions with finite support on $\Bun$, where the monomial $x_{n_1} x_{n_2} \cdots x_{n_r}$ is identified with the delta-function on $[\mc{O}(n_1) \oplus \mc{O}(n_2) \cdots \oplus \mc{O}(n_r)]$.  

One checks that for $i \in \{0,1,2 \}, n \in \mathbb{Z}$, $[\mc{O}_i (n), \mathbb{TF}] \subset \mathbb{TF}$ and $[\mc{O}_i (n), \mathbb{J}] \subset \mathbb{J}$. The adjoint action of $\mathfrak{k}$ therefore descends to $\mathbb{Q}[x_n] \simeq \mathbb{TF}/\mathbb{J}$ by derivations.  

There is for each $i, n$ a unique non-split extension
\[
\ses{\mc{O}(n-1)}{\mc{O}(n)}{\mc{O}_i (n)}
\]
For $m \neq n$, every extension 
\[
\ses{\mc{O}(n-1)}{\F}{\mc{O}_i (m)}
\]
with $\F$ locally free is split (though there are in general several torsion-free extensions). Similarly, every extension
\[
\ses{\mc{O}_i (m)}{\F}{{\mc{O}(n)}}
\]
is split.  This yields the identity
\begin{equation} \label{line_action}
[\mc{O}_i (n), \mc{O} (m) ] = \delta_{n-1,m} \mc{O}(n)
\end{equation}
$\on{mod} \mathbb{J}$, showing that under the above identifications, $\mc{O}_i(n)$ acts by $x_n \partial_{n-1}$. Letting $\wt{\mathfrak{gl}}_{\infty}$ denote the Lie algebra of infinite matrices spanned by $E_{i,j}, i,j \in \mathbb{Z}$, the previous analysis shows that there exists a surjective Lie algebra homomorphism
\[
\nu: \mathfrak k \twoheadrightarrow \wt{\mathfrak{gl}}^-_{\infty}
\]
where $\wt{\mathfrak{gl}}^-_{\infty}$ denotes the lower-triangular part, spanned by $E_{i,j}$ with $i > j$. The kernel of $\nu$ is large however, as it contains the ideal generated by $\mc{O}_i (n) - \mc{O}_j (n) $ for all $i,j \in \{ 0,1,2 \}, n \in \mathbb{Z}$. 
\end{myeg}

\newpage
\begin{bibdiv}

  \begin{biblist}
  	
	\bib{BK}{article}{
   author={Baumann, Pierre},
   author={Kassel, Christian},
   title={The Hall algebra of the category of coherent sheaves on the
   projective line},
   journal={J. Reine Angew. Math.},
   volume={533},
   date={2001},
   pages={207--233},
   issn={0075-4102},
   review={\MR{1823869}},
   doi={10.1515/crll.2001.031},
}

\bib{BS}{article}{
   author={Burban, Igor},
   author={Schiffmann, Olivier},
   title={On the Hall algebra of an elliptic curve, I},
   journal={Duke Math. J.},
   volume={161},
   date={2012},
   number={7},
   pages={1171--1231},
   issn={0012-7094},
   review={\MR{2922373}},
   doi={10.1215/00127094-1593263},
}

     \bib{CLS}{article}{
  	author={Chu, Chenghao},
  	author={Lorscheid, Oliver},
  	author={Santhanam, Rekha},
  	title={Sheaves and $K$-theory for $\mathbb F_1$-schemes},
  	journal={Adv. Math.},
  	volume={229},
  	date={2012},
  	number={4},
  	pages={2239--2286},
  }

 \bib{connes2019absolute}{article}{
	author={Connes, Alain},
	author={Consani, Caterina}
	title={On Absolute Algebraic Geometry, the affine case},
	journal={preprint},
   eprint={arXiv:1909.09796}
}

   \bib{CC1}{article}{
	author={Connes, Alain},
	author={Consani, Caterina}
	title={Schemes over $\mathbb{F}_1$ and zeta functions},
	journal={Compositio Mathematica},
	volume={146},
	number={6},
	pages={2239--2286},
	year={2010},
	publisher={London Mathematical Society}
}
  		
\bib{CC2}{article}{
   author={Connes, Alain},
   author={Consani, Caterina},
   title={On the notion of geometry over $\Bbb F_1$},
   journal={J. Algebraic Geom.},
   volume={20},
   date={2011},
   number={3},
   pages={525--557},
   issn={1056-3911},
   review={\MR{2786665}},
   doi={10.1090/S1056-3911-2010-00535-8},
}

\bib{CCM}{article}{
   author={Connes, Alain},
   author={Consani, Caterina},
   author={Marcolli, Matilde},
   title={Fun with $\Bbb F_1$},
   journal={J. Number Theory},
   volume={129},
   date={2009},
   number={6},
   pages={1532--1561},
   issn={0022-314X},
   review={\MR{2521492}},
   doi={10.1016/j.jnt.2008.08.007},
}
  	
  	\bib{CHWW}{article}{
   author={Corti\~{n}as, Guillermo},
   author={Haesemeyer, Christian},
   author={Walker, Mark E.},
   author={Weibel, Charles},
   title={Toric varieties, monoid schemes and cdh descent},
   journal={J. Reine Angew. Math.},
   volume={698},
   date={2015},
   pages={1--54},
   issn={0075-4102},
   review={\MR{3294649}},
   doi={10.1515/crelle-2012-0123},
}

\bib{CK}{article}{
   author={Connes, Alain},
   author={Kreimer, Dirk},
   title={Hopf algebras, renormalization and noncommutative geometry},
   journal={Comm. Math. Phys.},
   volume={199},
   date={1998},
   number={1},
   pages={203--242},
   issn={0010-3616},
   review={\MR{1660199}},
   doi={10.1007/s002200050499},
}
	
 \bib{CLMT}{article}{
   author={Chang, Lay Nam},
      author={Lewis, Zachary},
         author={Minic, Djordje},
            author={Takeuchi, Tatsu},
   title={Quantum ${{\mathbb{F}}_{{\rm un}}}$: the q = 1 limit of Galois field quantum mechanics, projective geometry and the field with one element},
   journal={Journal of Physics A: Mathematical and Theoretical},
   volume={40},
   date={2014},
   number={47},
    doi={0.1088/1751-8113/47/40/405304},
}

\bib{coxtoric}{book}{
	author={Cox, David},
	author={Little, John B},
	author={Schenck, Henry K},
	title={Toric varieties},
	publisher={American Mathematical Soc.},
	year={2011},
}

    % \bibitem{CLS} C.~Chu, O.~Lorscheid, R.~Santhanam, Sheaves and K-theory for $\mathbb{F}_1$-schemes. Adv. Math. 229.4 (2012): 2239-2286.

\bib{D1}{article}{
   author={Deitmar, Anton},
   title={$\Bbb F_1$-schemes and toric varieties},
   journal={Beitr\"{a}ge Algebra Geom.},
   volume={49},
   date={2008},
   number={2},
   pages={517--525},
   issn={0138-4821},
   review={\MR{2468072}},
}

\bib{D2}{article}{
   author={Deitmar, Anton},
   title={Schemes over $\Bbb F_1$},
   conference={
      title={Number fields and function fields---two parallel worlds},
   },
   book={
      series={Progr. Math.},
      volume={239},
      publisher={Birkh\"{a}user Boston, Boston, MA},
   },
   date={2005},
   pages={87--100},
}

    \bib{D3}{article}{
      author={Deitmar, Anton},
      title={Remarks on zeta functions and $K$-theory over ${\bf F}_1$},
      journal={Proc. Japan Acad. Ser. A Math. Sci.},
      volume={82},
      date={2006},
      number={8},
      pages={141--146},
    }

    % \bibitem{Dei} A.~Deitmar, Remarks on zeta functions and K-theory over $\mathbb{F}_1$. 
    %   Proc. Japan Acad. Ser. A Math. Sci. 82 (2006), no. 8, 141--146

 \bib{durov2007new}{article}{
	author={Durov, Nikolai},
	title={New approach to Arakelov geometry},
    journal={preprint},
    eprint={arXiv:0704.2030}
			%year={2007}
		}

 \bib{Dy}{article}{
   author={Dyckerhoff, Tobias},
   title={Higher categorical aspects of Hall algebras},
   conference={
      title={Building bridges between algebra and topology},
   },

   book={
      series={Adv. Courses Math. CRM Barcelona},
      publisher={Birkh\"{a}user/Springer, Cham},
   },
   date={2018},
   pages={1--61},
   review={\MR{3793857}},
}
    % \bibitem{D} T.~Dyckerhoff, Higher categorical aspects of Hall algebras. Preprint {\tt: arXiv: 1505.06940 }

\bib{DK}{book}{
   author={Dyckerhoff, Tobias},
   author={Kapranov, Mikhail},
   title={Higher Segal spaces},
   series={Lecture Notes in Mathematics},
   volume={2244},
   publisher={Springer, Cham},
   date={2019},
   pages={xv+218},
   isbn={978-3-030-27122-0},
   isbn={978-3-030-27124-4},
   review={\MR{3970975}},
   doi={10.1007/978-3-030-27124-4},
}
    % \bibitem{DK} T.~Dyckerhoff, M.~Kapranov, Higher Segal Spaces I. Preprint {\tt arXiv: 1212.3563 }

    \bib{EJS}{article}{
      author={Eppolito, Chris},
      author={Jun, Jaiung},
      author={Szczesny, Matt},
      title={Proto-exact categories of matroids, Hall algebras, and K-theory},
     	journal={Math. Zeit.},
     year={2019},
     publisher={Springerr}
    }

    \bib{G}{article}{
   author={Green, James A.},
   title={Hall algebras, hereditary algebras and quantum groups},
   journal={Invent. Math.},
   volume={120},
   date={1995},
   number={2},
   pages={361--377},
   issn={0020-9910},
  
}

    \bib{Hek}{thesis}{
      author={Hekking, Jeroen},
      title={Segal Objects in Homotopical Categories \& K-theory of Proto-exact Categories},
      type={Master's Thesis, Univ. of Utrecht},
      date={2017},
      eprint={https://www.universiteitleiden.nl/binaries/content/assets/science/mi/scripties/master/hekking_master.pdf},
    }

    % \bibitem{Hek} J.~Hekking, Segal Objects in Homotopical Categories \& K-theory of Proto-exact Categories. Master's thesis, Univ. of Utrecht. 

	\bib{HW}{article}
 {AUTHOR = {Haesemeyer, Christian and Weibel, Charles A.},
     TITLE = {The {$K'$}-theory of monoid sets},
   JOURNAL = {Proc. Amer. Math. Soc.},
  FJOURNAL = {Proceedings of the American Mathematical Society},
    VOLUME = {149},
      YEAR = {2021},
    NUMBER = {7},
     PAGES = {2813--2824},
}	
	
	{
   author={Haesemeyer, Christian},
   author={Weibel, Charles},
   title={The $K'$ - theory of monoid sets},
   journal={arXiv:1909.00297},
}

    \bib{H}{article}{
      author={Hubery, Andrew},
      title={From Triangulated Categories to Lie Algebras: a theorem of Peng and Xiao},
      journal={Trends in Representation Theory of Algebras and Related Topics, Contemp. Math.},
      volume={406},
      pages={51--66},
      year={2006},
      publisher={American Mathematical Soc., Providence, RI}
    }
    
    \bib{Kato}{article}{
   author={Kato, Kazuya},
   title={Toric singularities},
   journal={Amer. J. Math.},
   volume={116},
   date={1994},
   number={5},
   pages={1073--1099},
   issn={0002-9327},
   review={\MR{1296725}},
   doi={10.2307/2374941},
}

    % \bibitem{H} A. Hubery, From triangulated categories to Lie algebras: a theorem of Peng and Xiao. Trends in representation theory of algebras and related topics, 51?66, Contemp. Math., 406, Amer. Math. Soc., Providence, RI, 2006.

    \bib{K1}{article}{
      title={Eisenstein series and quantum affine algebras},
      author={Kapranov, Mikhail},
      journal={Journal of Mathematical Sciences},
      volume={84},
      number={5},
      pages={1311--1360},
      year={1997},
      publisher={Springer}
    }

    % \bibitem{K1} M.~Kapranov, Eisenstein series and quantum affine algebras. Algebraic geometry, 7. J. Math. Sci. (New York) 84 (1997), no. 5, 1311--1360. 

    \bib{KSV}{article}{
      title={The Hall algebra of a curve},
      author={Kapranov, Mikhail},
      author={Schiffmann, Olivier},
      author={Vasserot, Eric},
      journal={Selecta Math. (N.S.)},
      volume={23},
      number={1},
      pages={117--177},
      year={2017}
    }
    % \bibitem{KSV} M.~Kapranov, O.~Schiffmann, E.~Vasserot, The Hall algebra of a curve. Selecta Math. (N.S.) 23 (2017), no. 1, 117--177.

\bib{KV}{article}{
title={The cohomological Hall algebra of a surface and factorization cohomology},
author={Kapranov, Mikhail},
      author={Vasserot, Eric},
      journal={Preprint arXiv:1901.07641 } }
      
      \bib{KV2}{article}{
   author={Kapranov, M.},
   author={Vasserot, E.},
   title={Kleinian singularities, derived categories and Hall algebras},
   journal={Math. Ann.},
   volume={316},
   date={2000},
   number={3},
   pages={565--576},
   issn={0025-5831},
   review={\MR{1752785}},
   doi={10.1007/s002080050344},
}

    \bib{KS}{article}{
      title={Feynman graphs, rooted trees, and Ringel-Hall algebras},
      author={Kremnizer, Kobi},
      author={Szczesny, Matt},
      journal={Comm. Math. Phys.},
      volume={289},
      number={2},
      pages={561--577},
      year={2009}
    }

   \bib{lorscheid2018f1}{article}{
	title={$\mathbb{F}_1$ for Everyone},
	author={Lorscheid, Oliver},
	journal={ahresbericht der Deutschen Mathematiker-Vereinigung},
	volume={120},
	number={2},
	pages={83--116},
	year={2018},
	publisher={Springer}
}

 \bib{lorscheid2018quasicoherent}{article}{
	title={Quasicoherent sheaves on projective schemes over $\mathbb{F}_1$},
	author={Lorscheid, Oliver},
	author={Szczesny, Matt},
	journal={Journal of Pure and Applied Algebra},
volume={222},
number={6},
pages={1337--1354},
year={2018},
publisher={Elsevier}
}

    % \bibitem{KS} K.~Kremnizer, M.~Szczesny, { Feynman graphs, rooted trees, and Ringel-Hall algebras},  Comm. Math. Phys. {\tt 289} (2009), no. 2, 561--577. 

    \bib{LR}{article}{
      title={Combinatorial Hopf algebras},
      author={Loday, Jean-Louis},
      author={Ronco, Mar\'{i}a},
      journal={Quanta of Maths},
      volume={11},
      pages={347--383},
      year={2010},
      publisher={Clay Math. Proc., 11, Amer. Math. Soc., Providence, RI}
    }

\bib{Pir}{article}{
    AUTHOR = {Pirashvili, Ilia},
     TITLE = {On cohomology and vector bundles over monoid schemes},
   JOURNAL = {J. Algebra},
    VOLUME = {435},
      YEAR = {2015},
     PAGES = {33--51},
}

\bib{Ringel}{article}{
   author={Ringel, Claus Michael},
   title={Hall algebras and quantum groups},
   journal={Invent. Math.},
   volume={101},
   date={1990},
   number={3},
   pages={583--591},
   issn={0020-9910},
   review={\MR{1062796}},
   doi={10.1007/BF01231516},
}

    % \bibitem{LR} Jean-Louis Loday, María Ronco, Combinatorial Hopf algebras. Quanta of maths, 347?383, Clay Math. Proc., 11, Amer. Math. Soc., Providence, RI, 2010.

    \bib{S}{article}{
      title={Lectures on Hall algebras},
      author={Schiffmann, Olivier},
      journal={Geometric methods in representation theory. II, S\'{e}min. Congr., 24-II, Soc. Math. France, Paris},
      pages={1--141},
      year={2012}
    }
    
    \bib{S2}{article}{
   author={Schiffmann, Olivier},
   title={On the Hall algebra of an elliptic curve, II},
   journal={Duke Math. J.},
   volume={161},
   date={2012},
   number={9},
   pages={1711--1750},
   issn={0012-7094},
   review={\MR{2942792}},
   doi={10.1215/00127094-1593362},
}

\bib{SV}{article}{
   author={Schiffmann, O.},
   author={Vasserot, E.},
   title={The elliptic Hall algebra, Cherednik Hecke algebras and Macdonald
   polynomials},
   journal={Compos. Math.},
   volume={147},
   date={2011},
   number={1},
   pages={188--234},
   issn={0010-437X},
   review={\MR{2771130}},
   doi={10.1112/S0010437X10004872},
}

    % \bibitem{S} O.~Schiffmann,  Lectures on Hall algebras.  Geometric methods in representation theory. II, 1-141, Sémin. Congr., 24-II, Soc. Math. France, Paris, 2012

    \bib{soule2004varieties}{article}{
	title={Les vari{\'e}t{\'e}s sur le corpsa un {\'e}l{\'e}ment},
	author={Soul{\'e}, Christophe},
	journal={Mosc. Math. J},
	volume={4},
	number={1},
	pages={217--244},
	year={2004}
}

\bib{Sch}{article}{
   author={Schmitt, William R.},
   title={Incidence Hopf algebras},
   journal={J. Pure Appl. Algebra},
   volume={96},
   date={1994},
   number={3},
   pages={299--330},
   issn={0022-4049},
   review={\MR{1303288}},
   doi={10.1016/0022-4049(94)90105-8},
}
 
    \bib{Sz3}{article}{
      title={Incidence categories},
      author={Szczesny, Matt},
      journal={J. Pure Appl. Algebra},
      volume={215},
      number={4},
      pages={303--309},
      year={2011},
      publisher={Elsevier}
    }
    % \bibitem{Sz3} M.~Szczesny, {Incidence Categories,} J. Pure Appl. Algebra {\tt 215} no. 4 (2011). 

    \bib{Sz1}{article}{
      title={On the Hall algebra of coherent sheaves on $\mathbb{P}^1$ over $\mathbb{F}_1$},
      author={Szczesny, Matt},
      journal={J. Pure Appl. Algebra},
      volume={216},
      number={3},
      pages={662--672},
      year={2012},
      publisher={Elsevier}
    }

    % \bibitem{Sz1} M.~Szczesny, On the Hall Algebra of Coherent Sheaves on $\mathbb{P}^1$ over $\fun$.  J. Pure Appl. Algebra 216 no. 2 (2012).

    \bib{Sz4}{article}{
      title={Representations of quivers over $\mathbb{F}_1$ and Hall algebras},
      author={Szczesny, Matt},
      journal={Int. Math. Res. Not.},
      volume={2012},
      number={10},
      pages={2377--2404},
      year={2012},
      publisher={Oxford University Press}
    }

    % \bibitem{Sz4} M.~Szczesny, Representations of quivers over $\fun$ and Hall algebras. Int. Math. Res. Not. IMRN 2012, no. 10, 2377--2404. 

    \bib{Sz2}{article}{
      title={On the Hall algebra of semigroup representations over $\mathbb{F}_1$},
      author={Szczesny, Matt},
      journal={Math. Z.},
      volume={276},
      number={1-2},
      pages={371--386},
      year={2014},
      publisher={Springer}
    }
    
    \bib{Sz0}{article}{
   author={Szczesny, Matt},
   title={The Hopf algebra of skew shapes, torsion sheaves on $\Bbb A_{/\Bbb
   F_1}^n$, and ideals in Hall algebras of monoid representations},
   journal={Adv. Math.},
   volume={331},
   date={2018},
   pages={209--238},
   issn={0001-8708},
}

    % \bibitem{Sz2} M.~Szczesny, On the Hall algebra of semigroup representations over $\fun$.
    %   Math. Z. 276 (2014), no. 1-2, 371-386.

    \bib{takeuchi1971free}{article}{
      title={Free Hopf algebras generated by coalgebras},
      author={Takeuchi, Mitsuhiro},
      journal={Journal of the Mathematical Society of Japan},
      volume={23},
      number={4},
      pages={561--582},
      year={1971},
      publisher={The Mathematical Society of Japan}
    }

    \bib{toen2009dessous}{article}{
	title={Au-dessous de Spec $\mathbb{Z}$},
	author={To{\"e}n, Bertrand and Vaqui{\'e}, Michel},
	journal={Journal of K-theory},
	volume={3},
	number={3},
	pages={437--500},
	year={2000},
	publisher={Cambridge University Press}
}

\bib{GvB}{article}{
   author={Graf von Bothmer, H.-C.},
   author={Hinsch, L.},
   author={Stuhler, U.},
   title={Vector bundles over projective spaces. The case $\Bbb F_1$},
   journal={Arch. Math. (Basel)},
   volume={96},
   date={2011},
   number={3},
   pages={227--234},
   issn={0003-889X},
   review={\MR{2784906}},
   doi={10.1007/s00013-011-0225-6},
}

    % \bibitem{takeuchi1971free} M.~Takeuchi, Free Hopf algebras generated by coalgebras. Journal of the Mathematical Society of Japan. 23.4.(1971), 561--582. 

  \end{biblist}
\end{bibdiv}

\address{\tiny Department of Mathematics, SUNY New Paltz, New Paltz, NY} \\
\indent \footnotesize{\email{junj@newpaltz.edu}}

\address{\tiny Department of Mathematics and Statistics, Boston University, 111 Cummington Mall, Boston} \\
\indent \footnotesize{\email{szczesny@math.bu.edu}}

\end{document}